\newcommand{\bb}[1]{\mathbf{#1}}
\newcommand{\x}{\bb{x}}
\newcommand{\R}{\mathbb{R}}
\newcommand{\nor}{\bb{n}}
\newcommand{\Ht}{L^{2}(0,T;H^1(\Omega_{un}))}
\newtheorem{theorem}{Theorem}
\newtheorem{remark}{Remark}
\newtheorem{prop}{Proposition}
\author{
Riccardo Saporiti $^{1}$, Carlo Sinigaglia$^{2}$, Andrea Manzoni $^{3}$ and Francesco Braghin$^{2}$}
\title{An optimal control strategy to design passive thermal cloaks of arbitrary shape}
\begin{document}

\maketitle

\thanks{Preprint accepted for publication in a future issue of the Proceedings of the Royal Society A \\ \\}

\thanks{$^{1}$ École Polytechnique Fédérale de Lausanne, Institute of Mathematics, 
Lausanne 1015, Switzerland \\
$^{2}$Politecnico di Milano, Department of Mechanical Engineering, Milano 20133, Italy\\
$^{3}$Politecnico di Milano, MOX - Department of Mathematics, Milano 20133, Italy\\}
\begin{abstract}

\noindent In this paper we describe a numerical framework for achieving passive thermal cloaking of arbitrary shapes in both static and transient regimes. The design strategy is cast as the solution of an optimal control problem (OCP) for the heat equation where the coefficients of the thermal diffusivity matrix take the role of control functions and the distance between the uncloaked and the cloaked field is minimized in a suitable observation domain. The control actions enter bilinearly in the heat equation, thus making the resulting OCP nonlinear, and its analysis nontrivial. We show that optimal diffusivity coefficients exist both for the static and the transient case; we derive a system of first-order necessary optimality conditions; finally, we carry out their numerical approximation using the Finite Element Method. A series of numerical test cases assess the capability of our strategy to tackle passive thermal cloaking of arbitrarily complex two-dimensional objects. 




\end{abstract}


\section{Introduction}
\label{intro}
\noindent Preventing an observer from locating an object is the purpose of the solution of a cloaking problem. To reach this goal, the properties of the cloak must be such that the surrounding field is locally indistinguishable from the one observed without the object itself. Being able to solve such a problem is of key importance in many fields like, e.g., 
%
%
optics, wave propagation phenomena \cite{cummer2007one,norris2008acoustic,chen2017broadband,zhang2008cloaking,greenleaf2008approximate}, heat conduction \cite{schittny2013experiments,guenneau2013fick,schittny2014invisibility} and convection \cite{leonhardt2013cloaking}. This paper focuses on a passive technique for thermal cloaking -- that is, the need of masking the effect of heat conduction and convection due to the presence of an object -- of two-dimensional arbitrarily shaped cloaks. The heat equation, as many other Partial Differential Equation (PDEs), is form-invariant under curvilinear coordinate transformations \cite{pendry2006controlling,leonhardt2006optical,kadic2015experiments, guenneau2010colours,schittny2013experiments}. In all the domains in which this property holds, transformation theories have been developed to solve the cloaking problem. However, this approach is very binding in terms of the shape of both the cloak and the object to hide. 
A successful attempt at deriving feasible material properties for passive thermal cloaks of ellipsoidal shape is presented in \cite{han2018full} while cloaking of arbitrary polygonal shapes has been achieved in \cite{Zhang:08} for electromagnetic problems. Polygonal shapes are divided into triangles in which a local coordinate transformation allows to map each triangle to a trapezoid thus leaving room for the object to hide. In the context of thermal problems, a composite coordinate transformation on subset of polygonal domains has been used to achieve thermal cloaking in arbitrary shapes, see \cite{Xu:18} and references therein.
The material properties of the cloak are then dictated by the resulting transformation which may generate steep gradients or even singularities. It is then up to the ingenuity of the designer to come up with a transformation which is realizable in practice, for example with the use of thermal metamaterials. On the other hand, the optimization based approach we propose allows direct control on the diffusivity tensor without relying on a coordinate transformation. This in turns allows engineering constraints to be easily handled while keeping an optimal tradeoff for the performance of the cloak. Achieving thermal cloaking through PDE-constrained optimization allows to ensure a remarkable flexibility regarding the shape of the obstacle to hide as well as how the cloaking sources are placed. An optimization formulation is combined with a reduced order model to improve performance in \cite{Sinigaglia_2022}. In particular, an active thermal cloak can be modelled by inserting a set of additional control variables into the state heat equation: these act as external sources or forces and do not change the intrinsic properties of the material. In this case, the physical realization of the cloak would require thermoelectric materials that, creating heat sources and sinks, manipulate the heat flux as desired. While passive cloaks do not require energy consumption, the geometry of the objects that can be properly hidden by a passive cloak is in general very simple. The application of bilinear or multiplicative control variables, on the other hand, allows obtaining these changes. Several bilinear Optimal Control Problems (OCPs) \cite{manzonioptimal} constrained by PDEs, feature bilinear control variables and have already been analyzed. Remarkable examples are provided by {\em(i)} the wave equation with a control variable affecting the diffusion coefficient \cite{9787068,glowinski2022bilinear,Clason_2020}, {\em(ii)} the search for the best advection field generated by boundary actuation in an a mean field model describing the motion of a particles swarm in terms of an advection-diffusion equation, as well as {\em(iii)}   
the design of an acoustic cloak \cite{Cominelli_2022}, just to mention a few.The aim of this work is to achieve thermal cloaking by relying on a passive technique -- rather than an active one -- and extend the approach to arbitrarily shaped cloaks. The flexibility of the optimization formulation provides a framework where constraints from actual experimental engineering design can be satisfied. In the acoustic domain, a similar idea was presented in \cite{Cominelli_2022} where an optimization-based approach is integrated with the design of a sonic metamaterial. Passive thermal cloaks can also be designed experimentally using metamaterials \cite{schittny2013experiments,yang2021controlling} but can also be realized using layering or other techniques. The approach presented in this paper provides a distributed anisotropic diffusivity field where constraints defined by the specific experimental realization can be added, thus obtaining an optimal cloak which satisfies design guidelines and achieves the optimum design under such conditions.

The mathematical formulation of this problem involves an optimal control problem for the heat equation with the diffusivity coefficient playing the role of control function. This assumption allows to cast the problem into the framework of bilinear OCPs. 
In particular, the control variables, being supposed to modify the diffusivity inside the cloak, are added to the background diffusivity. 
 We show the performances of our device in both stationary and time-dependent conditions, and we emphasize the benefits and the limitations that arise in this problem with respect to the case of active cloaking. Achieving passive cloaking is indeed more complex than active cloaking due to the necessity to impose nonlinear constraints in the optimization problem, and to the nonlinear interaction between the state and the control, this latter affecting the state operator itself rather than its source or boundary data, as in the case of active cloaking. 
 This fact entails that the solution to the OCP is preferably found, numerically, through an iterative method rather than solving a saddle-point problem arising from the all-at-once imposition of the optimality conditions, as it would be the result in the case of active cloaking. Moreover,  
 the proof of the existence of an optimal control is also more difficult to obtain compared to the case of the linear-quadratic OCP arising from active cloaking; for this reason, an entire section of the paper is devoted to the analysis of the resulting OCP, before proceeding to its numerical approximation.  
   
The structure of this paper is the following. In Section 2 we set the (infinite-dimensional) OCP, derive first-order necessary optimality conditions, and proceed by discretizing the problem with the Finite Element Method (FEM). In Section 3 we derive a series of theoretical results required to analyze the well-posedness of the mathematical problem; in particular, we prove the existence and uniqueness of an optimal state for any given control variable, also showing that, under suitable assumptions on the functional spaces, we can obtain the Fréchet differentiability of the control-to-state map and the existence of an optimal control variable. In Section 4 we show the results obtained through our numerical simulations, both in the stationary case and in the time-dependent case.  Finally, some conclusions are drawn in Section 5.

\vspace{-0.1cm}

\section{The optimal control problem: formulation and analysis}
\label{OCP}
In this Section, we formulate the cloaking problem as an OCP with a state constraint given by a PDE. As a reference unperturbed state, we consider the thermal field generated by a source in a bounded domain $\Omega_{un}$ and the time interval $(0,T)$. The reference thus satisfies the heat equation: \vspace{-0.1cm}
\begin{equation}
\label{heat_basic}
    \rho c_{p} \frac{\partial z(\x,t)}{\partial t} - k \Delta z(\x,t) = \tilde{s}(\x) \quad \text{in} \quad \Omega_{un}\times (0,T) \vspace{-0.1cm}
\end{equation}
where the temperature $z$ is measured in Kelvin, $\x \in \R^2$ is the spatial coordinate in meters, $t$ indicates time in seconds, $c_{p}$ is the specific heat ($\text{J}\text{K}^{-1}\text{kg}^{-1}$), $\rho$ the mass density ($\text{kg}\,\text{m}^{-3}$) and $k$ is the thermal conductivity ($\text{W}\,\text{m}^{-1}\,\text{K}^{-1}$) of the material. It is useful to define the thermal diffusivity $\mu = \frac{k}{\rho c_p}$ ($\text{m}^2\text{s}^{-1}$) and rewrite Equation (\ref{heat_basic}) equivalently as: \vspace{-0.1cm}
\begin{equation}
\label{heat_basic}
     \frac{\partial z}{\partial t} - \mu \Delta z = s  \quad \text{in} \quad \Omega_{un}\times (0,T) \vspace{-0.1cm}
\end{equation}
where $s = \frac{\tilde{s}}{\rho c_p}$ ($\text{K}\text{s}^{-1}$) is the source term.
To solve it, we need to equip the problem with suitable initial and boundary conditions. For the sake of simplicity, we consider a square domain with homogeneous Robin boundary conditions that approximate to first-order an unbounded domain \cite{ABCs}. Hence, the unperturbed field $z \in \Ht$ is the solution of \vspace{-0.1cm}
\begin{equation}
\label{reference_state}
\begin{cases}
\begin{array}{ll}
\frac{\partial z}{\partial t} -\mu\Delta z    = s &  \textrm{in} \quad \Omega_{un}\times (0,T) 
\phantom{space} \\
-\mu\nabla z \cdot \nor + \alpha \, z = 0 & \textrm{on} \quad \partial \Omega_{un} \times (0,T)  \\
z(\x,0)= 0 & \textrm{in} \quad \Omega_{un}\times \{0\},  
\end{array}
\end{cases}
\end{equation}
whereas the steady-state reference is simply obtained by considering the Poisson equation $ -\mu\Delta z    = s$ with the same boundary conditions on $\Gamma_d = \partial \Omega_{un}$ and neglecting the initial conditions. 
To take into account far-field effects, still relying on a bounded domain, the simple choice $\alpha=1$ allows to obtain good results in practice \cite{ABCs}.  


\noindent We now consider the temperature field generated by considering the presence of an obstacle \linebreak $\Theta \subset \Omega_{un}$ whose temperature is kept constant. The obstacle thus induces nonhomogeneous boundary conditions of Dirichlet type along its boundary $\Gamma_o = \partial \Theta$ that perturbs the background thermal field. The thermal cloak domain surrounding the obstacle is denoted by $\Omega_c$ and it is the domain for the control functions $u,v,f$ which affects the diffusivity matrix. 
We denote as $q \in {L^{2}(0,T;H^1(\Omega))}$ the temperature field in the presence of the obstacle, that is the solution of \vspace{-0.1cm}
\begin{equation}
\label{state_problem}
\begin{cases}
\begin{array}{ll}
\frac{\partial {q}}{\partial t} -\nabla{}\cdot\left\{[\mu I + u\chi_{c}U+ f\chi_{c}L + v\chi_{c}L]\nabla{q}\right\} = s &  \textrm{in} \quad \Omega \times (0,T) 
\phantom{space} \\
-\mu\nabla q \cdot \nor + q = 0 & \textrm{on} \quad \Gamma_d \times (0,T) \\
q = T_o & \textrm{on} \quad \Gamma_o \times (0,T) \\
q(\x,0)= 0 & \textrm{in} \quad \Omega \times \{0\}.  
\end{array}
\end{cases}
\end{equation}
where $\Omega = \Omega_{un} \setminus \Theta $ and $\partial \Omega = \Gamma_o \cup \Gamma_d $.
$I$ is the identity matrix, while the control functions $u$, $f$, $v$ multiply respectively the matrices: \vspace{-0.1cm}
\begin{equation*}
U= \left( \begin{array}{ccc}
1 & 0\\
0 & 0\end{array} \right), \quad 
L= \left( \begin{array}{ccc}
0 & 0\\
0 & 1\end{array} \right), \quad 
S= \left( \begin{array}{ccc}
0 & 1\\
1 & 0\end{array} \right).
\end{equation*}
In this way, the total diffusivity matrix results in \vspace{-0.1cm}
\begin{equation} \label{diffusivity_matrix}
    K = \begin{pmatrix}
        \mu+u && v \\ 
        v && \mu + f
    \end{pmatrix}\, ;
\end{equation}
above, $\chi_{c}$ denotes the characteristic function of the control domain, that is, \vspace{-0.1cm}
\begin{equation*}
\chi_{c}(\textbf{x})=
\begin{cases}
    1,& \text{if } \textbf{x}\in\Omega_{c}\\
    0,              & \text{otherwise}.
\end{cases}
\end{equation*}

\begin{figure}[t!h]
\vspace{-0.2cm}
\centerline{
\includegraphics[width=0.5\textwidth]{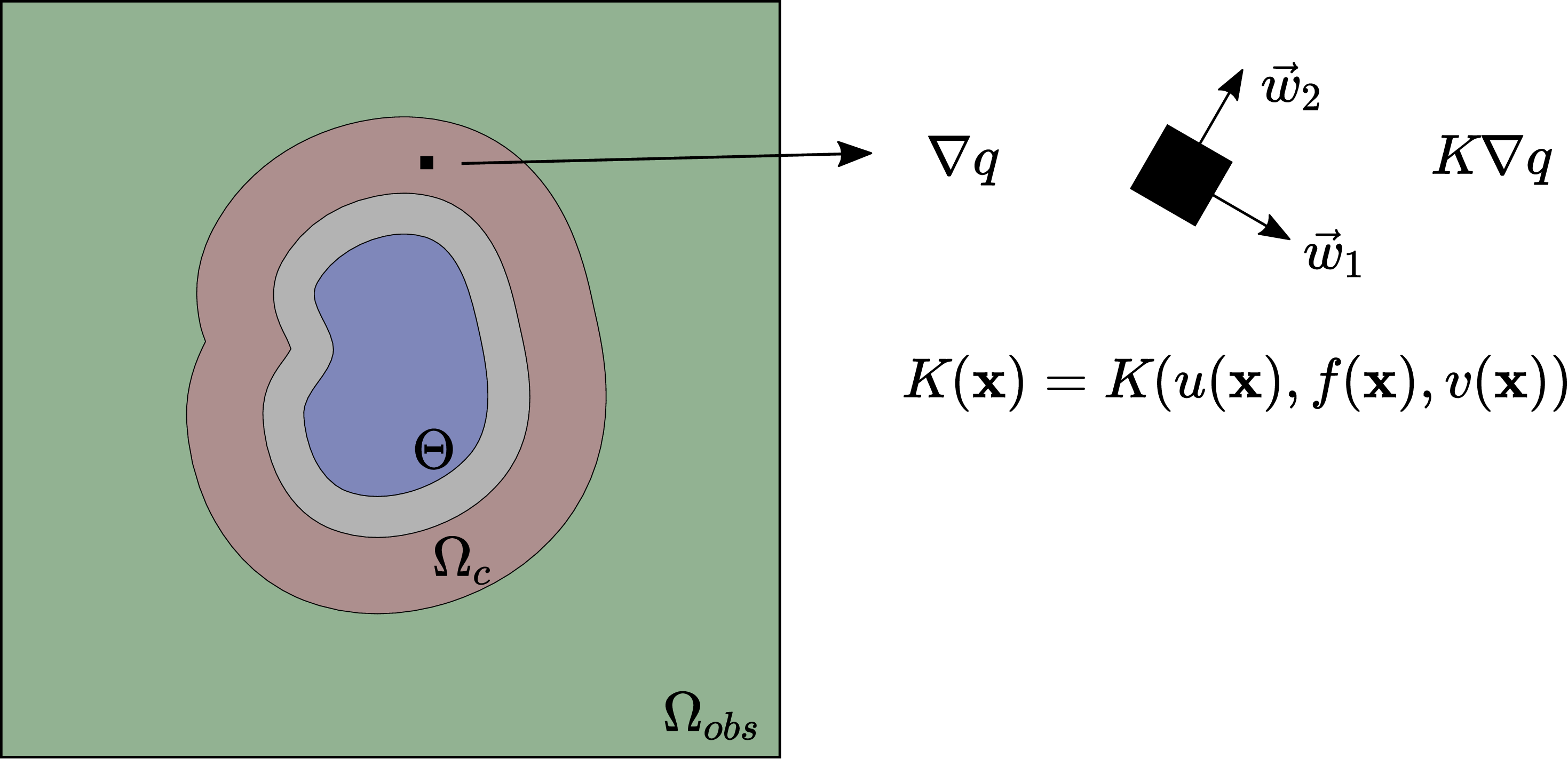}
}
	\caption{Conceptual domains definition and control strategy. The control triple $(u,f,v)$ modifies the diffusivity properties of the material inside the cloak, this latter being regarded as control domain $\Omega_c$. The optimal diffusivity matrix $K$ modifies the heat flow around the obstacle $\Theta$ to minimize the thermal trace in the observation domain $\Omega_{obs}$. The eigenvectors of the diffusivity matrix $\protect \overrightarrow{w}_1$, $\protect \overrightarrow{w}_2$ define the direction of maximum and minimum diffusivity, respectively.} 
	\label{control_layout}
 \vspace{-0.45cm}
\end{figure}

We now turn to the setup of the Optimal Control Problem (OCP), referring to Figure \ref{control_layout} for its illustration. We consider an annular region $\Omega_c \subset \Omega$ in which a space-time varying control source $u(\x,t)$ is defined.
The cloaking objective can be rephrased as finding $u,f,v\in\mathcal{U}$ so that the state $q$ is as close as possible to the reference thermal field $z$ onto an observation region $\Omega_{obs} \subset \Omega$, exterior to the object.
In our case, the control region is defined as a circular annulus with thickness $r_c$ surrounding the target to cloak. Note that the flexibility allowed by the PDE-constrained formulation allows the control region to be arbitrarily shaped in the domain thus bridging the gap toward realistic applications.

Therefore, the cloaking problem can be formulated as the following OCP: \vspace{-0.1cm}
\begin{align} \label{ocp_formulation_functional}
\begin{split}
    \min\limits_{u,f,v,q} J(u,f,v,q) = &\frac{\alpha}{2}\int_{0}^{T}\int_{\Omega_{obs}} (q-z)^2 \,d\Omega dt + \\ &\int_{0}^{T}\int_{\Omega_{c}} \frac{\beta}{2} u^2 + \frac{\beta_{g}}{2} \|\nabla{u}\|^2 + \frac{\xi}{2} f^2 + \frac{\xi_{g}}{2} \|\nabla{f}\|^2 + \frac{\gamma}{2} v^2 + \frac{\gamma_{g}}{2} \|\nabla{v}\|^2 \,d\Omega dt, 
\end{split}
\end{align}
\begin{equation} \label{ocp_formulation_pde}
\textrm{subject to} 
    \begin{cases}
    \begin{array}{ll}
\displaystyle      \frac{\partial{q}}{\partial{t}}-\nabla{}\cdot\left\{[ \mu I + u\chi_{c}U + f\chi_{c}L + v\chi_{c}S ]\nabla{q}\right\}=s &\text{in $\Omega\times(0,T)$}
          \\
          q = T_{o}&\text{on $\Gamma_o\times(0,T)$}
          \\
          -\mu\nabla{q}\cdot \nor+\alpha q = 0&\text{on $\Gamma_{d}\times(0,T)$}
          \\
          q(\textbf{x},0) = 0&\text{in $\Omega\times(0,T)$},
        \end{array}
        \end{cases}
\end{equation}
where $z$ solves the reference problem (\ref{reference_state}). The OCP \eqref{ocp_formulation_functional}--\eqref{ocp_formulation_pde} consists of the minimization of a quadratic cost functional subject to the nonlinear constraint defined by the parabolic heat equation, with the nonlinearity resulting from the product between the control and the state variables. 
For the sake of simplicity, and without losing generality, hereon we will take $\alpha=1$.

\begin{remark}
 The OCP above fits the theory of Bilinear Control problems. Indeed, the control terms directly  affect the diffusivity of the cloak. Several cases of multiplicative controls have already been treated, see,  e.g., \cite{doi:10.1137/17M1139953,https://doi.org/10.48550/arxiv.2002.03988,Clason_2020 }. 
While the analysis carried out in \cite{Sinigaglia_2022} involved a linear OCP in which the distributed control played the role of an additive source, now we turn to the problem of optimizing the material properties -- that is, acting on the state operator -- to get the desired cloaking. 
\end{remark}

Similarly, we can state the steady-state OCP as follows: 
\begin{align} \label{SS_ocp_functional}
\begin{split}
    &\min\limits_{u,f,v,q} J_{ss}(u^{ss},f^{ss},v^{ss},q^{ss})= \frac{1}{2}\int_{\Omega_{obs}} (q^{ss}-z^{ss})^2 \,d\Omega + \\
     &\int_{\Omega_{c}} \frac{\beta}{2} (u^{ss})^{2} + \frac{\beta_{g}}{2} \| \nabla{}u^{ss}\|^{2} + \frac{\xi}{2} (f^{ss})^{2} + \frac{\xi_{g}}{2} \|\nabla{}f^{ss}\|^{2} + \frac{\gamma}{2} (v^{ss})^{2} + \frac{\gamma_{g}}{2} \|\nabla{}v^{ss}\|^{2} \,d\Omega,
\end{split}
\end{align}

\begin{equation} \label{SS_pde_formulation}
    \textrm{subject to}  \begin{cases}
    \begin{array}{ll}
          -\nabla{}\cdot[(\mu I+  u^{ss}\chi_{c}U + f^{ss}\chi_{c}L  + v^{ss}\chi_{c}S)\nabla{}q^{ss}=s&\text{in $\Omega$}
          \\
          q^{ss} = T_{o}&\text{on $\Gamma_{o}$}
          \\
          -\mu\nabla{q^{ss}}\cdot \textbf{n}+ q^{ss} = 0&\text{on $\Gamma_{d}$}.
        \end{array}
        \end{cases}
\end{equation}

We can now derive a system of first-order necessary optimality conditions for the transient problem, exploiting the Lagrange method \cite{fredi,herzog}. To this aim, we can introduce the Lagrangian functional $\mathcal{L}: \mathcal{V} \times \mathcal{U}^3 \times \mathcal{W}^{*} \mapsto \R$ expressed by 
\begin{equation*}
    \mathcal{L}(q,u,f,v,p) = \tilde{J}(u,f,v,q) + \langle p , G(q,\bb{u}) \rangle_{\mathcal{W}^{\star},\mathcal{W}}
\end{equation*}
where, using the Lagrange multiplier $p\in  \mathcal{W}^{*} $, we "added" to the cost functional $J$ the nonlinear state equation expressed in abstract form by the operator $G:\mathcal{V} \times \mathcal{U}^3 \rightarrow \mathcal{W}$, whose precise definition will be given later.
The functional setting involved $\mathcal{V}=H^{1}(0,T;H^{1}(\Omega),H^{1}(\Omega)^{\star})$ and $\mathcal{W}=L^2(0,T;H^1_{\Gamma_{o}}(\Omega)^{*})$ so that state, control and adjoint variables are considered independently; for the well posedness of the problem, we require that $\mathcal{U}=\{u\in L^{\infty}(0,T;H^2(\Omega_{c})), \frac{\partial{u}}{\partial{t}}\in L^2(0,T;H^2(\Omega_{c})^{\star})\}$.
The set of first-order necessary optimality conditions can be therefore found by imposing that the G\^{a}teaux derivatives of the Lagrangian with respect to $(q,u,f,v,p)$, along an arbitrary variation $(\delta q,\delta u,\delta f, \delta v,\delta p)$, vanish. 
In our case, the Lagrangian takes the explicit form
\begin{equation}
\label{lag_def}
    \mathcal{L} = J(q,u,f,v) + \int_{\Omega}\int_0^T \left( -\frac{\partial q}{\partial t} + \nabla{}\cdot\{[\mu I + u\chi_{c}U + f\chi_{c}L + v\chi_{c}S ]\nabla{q}\} + s \right) p  \,\,  d td \Omega ,
\end{equation}
and the resulting optimality system in strong form consists of the state problem (\ref{state_problem}), the adjoint problem
\begin{equation}
\label{adjoint_problem}
\begin{cases}
\begin{array}{ll}
\displaystyle -\frac{\partial p}{\partial t} -\nabla{}\cdot\{ [ \mu I  + u\chi_{c}U + f\chi_{c}L + v\chi_{c}S ] \nabla{p} \}    = (q-z)\chi_{obs} &  \textrm{in} \quad \Omega \times (0,T) 
\phantom{space} \\
-\mu\nabla p \cdot \nor + p = 0 & \textrm{on} \quad \Gamma_d \times (0,T) \\
p = 0 & \textrm{on} \quad \Gamma_o \times (0,T) \\
p(\x,T)= 0 & \textrm{in} \quad \Omega \times \{T\}, \\
\end{array}
\end{cases}
\end{equation}
and the following optimality conditions (Euler equation):
\begin{equation} \label{euler_equation}
    \begin{cases}
    \begin{array}{ll}
        ( \beta u - (U\nabla{q})\cdot\nabla{p}, \delta u )_{{L^2(Q_{c_{T}})} } + ( \beta_{g} \nabla{}u , \nabla{}(\delta u) )_{{L^2(Q_{c_{T}})} } = 0&\forall h \in \mathcal{U} \\
              ( \xi f - (L\nabla{q})\cdot\nabla{p} , \delta f )_{{L^2(Q_{c_{T}})} } + ( \xi_{g} \nabla{}f , \nabla{} (\delta f) )_{{L^2(Q_{c_{T}})} } = 0&\forall r \in \mathcal{U} \\
              ( \gamma v - (S\nabla{q})\cdot\nabla{p} , \delta v )_{{L^2(Q_{c_{T}})} } + ( \gamma_{g} \nabla{} v , \nabla{} (\delta v) )_{{L^2(Q_{c_{T}})} } = 0&\forall k \in \mathcal{U}, 
    \end{array}
    \end{cases}
\end{equation}
where $Q_{c_{T}} = \Omega_{c}\times (0,T)$ and 
\begin{equation*}
    (y,\psi)_{L^2(Q_{c_{T}})} = \int_{0}^{T} (y(t),\psi(t))_{L^2(\Omega_{c})}dt.
\end{equation*}

 In the adjoint problem (\ref{adjoint_problem}), $\chi_{obs}$ denotes the characteristic function of the observation domain $\Omega_{obs}$.
 Since this kind of OCPs is rather standard, we skip the details about the derivation of the optimality system and refer the reader to \cite[Chapter 3]{fredi}.

\subsection{Well-posedness analysis of the stationary Optimal Control Problem}

This section is devoted to prove the existence of at least one optimal control for both the static problem.
 The proof of the main theorems is provided in the main text, while the proofs of theorems on well-posedness of the state problems and differentiability of the control-to-state maps are provided in Appendix \ref{appendix_proof}.

The analysis of the problem is carried out considering a unique isotropic control $u$ acting through an identity matrix $I$. Here $\Omega\subset\mathbb{R}^2$ is a bounded domain with $\partial{\Omega}$ Lipschitz. The extension to the case with three control terms is immediate since we are working in linear vector spaces; for this reason, we consider the following minimization problem, resulting from a slight simplification of the state problem \eqref{SS_pde_formulation}: 
\begin{equation} \label{functional_analysis}
    \min\limits_{u,q} J(u,q) = \frac{1}{2}\int_{\Omega_{obs}} (q-z)^2 \,d\Omega + \frac{\beta}{2} \int_{\Omega_{c}} u^2 \,d\Omega
\end{equation}
    \begin{equation}\label{pde_analysis}
    \textrm{subject to}
    \begin{cases}
    \begin{array}{ll}
    \nabla{}\cdot\{-[(\mu+u\chi_{c}(\textbf{x}))I ]{\nabla{q(\textbf{x})}}\}=s(\textbf{x} )&\text{in $\Omega$}
          \\
          q(\textbf{x}) = T_{0}&\text{on $\Gamma_{o}$}
          \\
          -\mu\nabla{q}\cdot \textbf{n}+ q(\textbf{x}) = 0&\text{on $\partial\Omega_{un}=\Gamma_{d}$}.
    \end{array}
    \end{cases}
\end{equation}
Let $R:H^{1/2}(\Gamma_{o})\rightarrow H^1(\Omega)$ be the $lifting$ $of$ $the$ $boundary$ $data$ (see, e.g., \cite[Chapter 3.3.3]{quart}), such that 
\begin{equation*}
    R(g)|_{\Gamma_{o}} = g \quad \text{for all $g\in H^{1/2}(\Gamma_{o})$}.
\end{equation*}
We suppose that such lifting is a linear and continuous operator and we denote $\tilde{q}=q-R(T_0)$. 
Notice that $\tilde{q}|_{\Gamma_{o}} = q|_{\Gamma_{o}} - R(T_0)|_{\Gamma_{o}} = 0$ and $\nabla{\tilde{q}}=\nabla{q} - \nabla{R(T_0)}$. 
The weak form of problem \eqref{pde_analysis} therefore reads as:  
\begin{equation} \label{weak_fomulation_analysis}
    \text{find $\tilde{q}\in V$ \ : \ \ $a(\tilde{q},v)$} = Fv \quad \forall v\in V
\end{equation}
where $V = H^{1}_{\Gamma_{o}}(\Omega)=\{f\in H^{1}(\Omega):\:\text{trace}_{\Gamma_{o}}(f)=0\}$, 
\begin{equation} \label{bilinear_form_and_linear_functional}
    \text{$a(q,p)$} = \int_{\Omega} (\mu + u\chi_{c})I\nabla{q}\cdot\nabla{p} \,d\Omega - \int_{\Gamma_{d}} qp \,d\Gamma,\quad Fv = \tilde{F}v- \text{$a(R (T_0),v)$},\quad \tilde{F}v= \int_{\Omega} sv d\Omega.
\end{equation}

Once \eqref{weak_fomulation_analysis} is solved, the solution to the state problem can be retrieved as  $q  = \tilde{q} +  R (T_0)$.

Moreover, we define the isotropic diffusivity matrix: $K(\textbf{x}) = ( \mu + u \chi_{c}(\textbf{x}))I$ and recall the definition of $H^1(\Omega)$-norm:
\begin{equation}
    \|f\|_{H^1(\Omega)} = \|f\|_{V} = \sqrt{\int_{\Omega} |f|^2 + |\nabla{f}|^2 \,d\Omega} \qquad \forall f\in H^{1}(\Omega). 
\end{equation}
Analogously, the $H^2(\Omega_{c})$-norm is 
\begin{equation*}
    \|f\|_{H^2(\Omega_{c})}^2 = \|f\|_{\mathcal{U}}^2 = \sum_{|\alpha|\leq 2} \int_{\Omega_{c}} |D^{\alpha}f|^2 \,d\Omega,\quad\forall f\in H^2(\Omega_{c}). 
\end{equation*}

We introduce the differential operator $\mathcal{E}$ as 
\begin{equation*}
    \mathcal{E}q =  \nabla{}\cdot\{-[(\mu+u\chi_{c}(\textbf{x}))I ]{\nabla{q(\textbf{x})}}\}.
\end{equation*}
To ensure that the integrals in the analytic expression of $a$ are well-defined we will assume that the operator $\mathcal{E}$ is $uniformly$ $elliptic$, i.e. there exist positive numbers $\hat{\alpha}$ and $M$ such that
\begin{equation*} 
    \hat{\alpha} \lvert \pmb{\xi} \rvert ^ {2} \leq K(\pmb{x})\pmb{\xi}\cdot \pmb{\xi} \leq M  \lvert \pmb{\xi} \rvert ^ {2} \quad \forall \pmb{\xi}\in\mathbb{R}^n \quad \text{a.e. in $\Omega$}.
\end{equation*}
By the uniform ellipticity condition, $K$ is $positive$ $definite$ in $\Omega$. Since $K$ is symmetric, its eigenvalues are real and the minimum eigenvalue is bounded from below by $\hat{\alpha}$, similarly its maximum eigenvalue is bounded from above by $M$.
 As a preliminary result, we start by proving the well-posedness of problem \eqref{pde_analysis} for every control $u \in \mathcal{U}$ (see Appendix \ref{proofTHM1} for the proof):
\begin{theorem}[Well posedness of the state problem \eqref{pde_analysis}] \label{Theorem_1}\ \\
Assume that $u\in\mathcal{U}$, with $\mathcal{U}=H^{2}(\Omega_{c})$, is such that the operator $\mathcal{E}$ is uniformly elliptic and that the bilinear form $a(\cdot,\cdot)$ defined in \eqref{bilinear_form_and_linear_functional} is coercive.
Then, for every  $s\in\mathbb{R}$, there exists a unique weak solution $q\in H^{1}(\Omega)$ to problem \eqref{pde_analysis}. Furthermore, the following a-priori-bound holds:
\begin{equation} \label{lax-milgram bound}
    \|\tilde{q}\|_{V}\leq \frac{|s| + \tilde{M}\|R T_0\|_{V}}{\tilde{\alpha}},
\end{equation}
where $\tilde{\alpha}$ and $\tilde{M}$ are, respectively, the coercivity and continuity constants of the bilinear form. 
\end{theorem}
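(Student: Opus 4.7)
The plan is to apply the Lax--Milgram theorem to the lifted weak formulation \eqref{weak_fomulation_analysis} in the Hilbert space $V = H^1_{\Gamma_o}(\Omega)$. Since the boundary lifting $\tilde{q} = q - R(T_0)$ has already been performed, it suffices to establish four ingredients: (i) $V$ is a Hilbert space, which is standard because $\Gamma_o$ is a Lipschitz portion of $\partial\Omega$ and the trace operator is continuous, so $V$ is a closed subspace of $H^1(\Omega)$; (ii) $a(\cdot,\cdot)$ is bilinear and continuous on $V\times V$; (iii) $a(\cdot,\cdot)$ is coercive on $V$, which is assumed in the hypotheses (with constant $\tilde\alpha$); and (iv) $F$ is a bounded linear functional on $V$. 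Once these are in place, Lax--Milgram yields both existence and uniqueness of $\tilde q\in V$, and the a priori bound will follow by inserting $v=\tilde q$ and estimating the right-hand side.

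For the continuity of $a$ in \eqref{bilinear_form_and_linear_functional}, the volume term is controlled by the uniform ellipticity upper bound: $|\int_\Omega (\mu+u\chi_c)\nabla q\cdot\nabla p\,d\Omega|\le M\|\nabla q\|_{L^2(\Omega)}\|\nabla p\|_{L^2(\Omega)}\le M\|q\|_V\|p\|_V$ by Cauchy--Schwarz. The boundary term $-\int_{\Gamma_d}qp\,d\Gamma$ is handled by the trace theorem: $\|q\|_{L^2(\Gamma_d)}\le C_{tr}\|q\|_V$, which gives $|\int_{\Gamma_d}qp\,d\Gamma|\le C_{tr}^2\|q\|_V\|p\|_V$. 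Adding the two bounds yields continuity with constant $\tilde M = M + C_{tr}^2$. Here the assumption $u\in H^2(\Omega_c)$ is important: by Sobolev embedding in two dimensions, $H^2(\Omega_c)\hookrightarrow L^\infty(\Omega_c)$, so the coefficient $\mu+u\chi_c$ is indeed in $L^\infty(\Omega)$, making the integrals well defined; the uniform ellipticity hypothesis then provides the quantitative upper bound $M$.

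For the continuity of $F = \tilde F - a(R(T_0),\cdot)$, the source contribution satisfies $|\tilde Fv|=|s\int_\Omega v\,d\Omega|\le |s|\,|\Omega|^{1/2}\|v\|_{L^2(\Omega)}\le|s|\,|\Omega|^{1/2}\|v\|_V$ since $s$ is a constant, while the lifting contribution is bounded by $|a(R(T_0),v)|\le\tilde M\|R(T_0)\|_V\|v\|_V$ by the continuity estimate just derived. Combining gives $\|F\|_{V^*}\le |s|\,|\Omega|^{1/2}+\tilde M\|R(T_0)\|_V$; absorbing the geometric factor into the constant and recalling the theorem statement yields the numerator of \eqref{lax-milgram bound}. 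The continuity of the lifting operator $R$ ensures $\|R(T_0)\|_V$ is finite whenever $T_0\in H^{1/2}(\Gamma_o)$.

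Having all hypotheses of Lax--Milgram verified, we conclude existence and uniqueness of $\tilde q\in V$ with $\|\tilde q\|_V\le\|F\|_{V^*}/\tilde\alpha$, which is exactly \eqref{lax-milgram bound}; the solution of \eqref{pde_analysis} is then recovered as $q = \tilde q + R(T_0)\in H^1(\Omega)$. The main technical point to check carefully is the continuity of $a$ through the boundary integral, which relies on the trace theorem; all other steps are routine once one observes that the Sobolev embedding in two dimensions legitimizes the pointwise ellipticity bounds on the $L^\infty$ coefficient $\mu+u\chi_c$.
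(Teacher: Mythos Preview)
Your proposal is correct and follows essentially the same route as the paper: both arguments verify the Lax--Milgram hypotheses for the lifted problem \eqref{weak_fomulation_analysis} by bounding the volume term via the $L^\infty$ bound on $\mu+u\chi_c$ (legitimized by the Sobolev embedding $H^2(\Omega_c)\hookrightarrow L^\infty(\Omega_c)$) and the boundary term via the trace inequality, arriving at the same continuity constant $\tilde{M}=M+C_{tr}^2$, and then extract the a priori bound by testing with $\tilde q$. The only cosmetic differences are that the paper also spells out a sufficient condition for coercivity, namely $\hat{\alpha}>(1+C_p^2)C_{tr}^2$ with $\tilde{\alpha}=\hat{\alpha}/(1+C_p^2)-C_{tr}^2$, whereas you simply invoke the coercivity hypothesis, and you obtain \eqref{lax-milgram bound} from the abstract Lax--Milgram estimate $\|\tilde q\|_V\le\|F\|_{V^*}/\tilde\alpha$ rather than the equivalent direct testing; neither changes the substance of the argument.
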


\vspace{-0.2cm}

In order to prove existence of optimal controls and derive the associated system of first-order necessary conditions for optimality we need to show that the control-to-state map $\Theta:\mathcal{U}\rightarrow V$, $q=\Theta(u)$, is Fr\'echet differentiable, i.e., that for every $u\in \mathcal{U}$ there exists an operator $\Theta'\in\mathcal{L}(\mathcal{U},\mathcal{L}(\mathcal{U},V))$ such that, for all $h\in \mathcal{U}$ for which $u+h\in \mathcal{U}$, the following relation holds:
\begin{equation*}
    \Theta(u+h) - \Theta(u) = \Theta'(u)h + R(u,h),
\end{equation*}
where $R(u,h)=o(\|h\|_{\mathcal{U}})$ for $\|h\|_{\mathcal{U}}\rightarrow 0$. Indeed, we can state the following (see Appendix \ref{proofTHM2} for the proof) result: \vspace{-0.2cm}
\begin{theorem}[Differentiability of the control to state map] \label{Theorem_2}\ \\
    The {\em control-to-state} map: $\Theta: \mathcal{U}\rightarrow V$ is Fr\'echet differentiable and the directional derivative $z = \Theta'(u)h$ at $u\in \mathcal{U}$ in the direction $h\in\mathcal{U}$ is the solution of \vspace{-0.1cm}
\begin{equation*}
 \begin{cases}
 \begin{array}{ll}
        - \nabla{}\cdot[  (\mu+u\chi_{c})\nabla{z}] = \nabla{}\cdot[h\chi_{c}\nabla{q}(u)] & \text{in $\Omega$} \\
        z = 0 &  \text{on $\Gamma_{o}$}\\
        -\mu\nabla{z}\cdot \textbf{n} + z = 0 & \text{on $\Gamma_{d}$}.
    \end{array}
    \end{cases}
\end{equation*}
\end{theorem}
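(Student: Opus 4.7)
The plan is to prove Fréchet differentiability by exhibiting an explicit candidate for $\Theta'(u)h$ --- namely the solution $z$ of the linearized boundary value problem stated in the theorem --- and then showing that the residual $r := \Theta(u+h)-\Theta(u)-z$ is of order $o(\|h\|_{\mathcal{U}})$ in $V$. A preliminary step is to verify that the linearized problem itself is well posed and that $z$ depends linearly and continuously on $h$, so that $\Theta'(u) \in \mathcal{L}(\mathcal{U},V)$ is meaningful.

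For the well-posedness of the linearized equation, I would invoke Lax--Milgram on the bilinear form $a(\cdot,\cdot)$ in \eqref{bilinear_form_and_linear_functional}, which is exactly the one appearing on the left-hand side: coercivity and continuity are inherited from Theorem \ref{Theorem_1}, so it suffices to check that the right-hand side $\nabla\cdot(h\chi_{c}\nabla q(u))$ induces a bounded linear functional on $V$. This uses the bound
\begin{equation*}
\|h\chi_{c}\nabla q(u)\|_{L^{2}(\Omega)} \le \|h\|_{L^{\infty}(\Omega_{c})} \|\nabla q(u)\|_{L^{2}(\Omega)} \le C\|h\|_{\mathcal{U}}\|q(u)\|_{V},
\end{equation*}
where the continuous embedding $H^{2}(\Omega_{c}) \hookrightarrow L^{\infty}(\Omega_{c})$ (valid since $\Omega_{c} \subset \mathbb{R}^{2}$) is essential, and $\|q(u)\|_{V}$ is controlled by the a priori bound \eqref{lax-milgram bound}. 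Linearity of $h \mapsto z$ is immediate, and the same chain of inequalities gives $\|z\|_{V} \le C'\|h\|_{\mathcal{U}}$.

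To identify the remainder, write $q = \Theta(u)$ and $q_{h} = \Theta(u+h)$, noting that $u+h \in \mathcal{U}$ still yields a uniformly elliptic operator for $\|h\|_{\mathcal{U}}$ sufficiently small (a standard open-set argument on the admissible controls). Subtracting the state equations for $q_{h}$ and $q$ gives
\begin{equation*}
-\nabla\cdot\bigl[(\mu + u\chi_{c})\nabla(q_{h}-q)\bigr] = \nabla\cdot\bigl[h\chi_{c}\nabla q_{h}\bigr],
\end{equation*}
with homogeneous data on $\Gamma_{o}$ and Robin data on $\Gamma_{d}$, and by Theorem \ref{Theorem_1} together with the embedding used above, this delivers the Lipschitz estimate $\|q_{h}-q\|_{V} \le C\|h\|_{\mathcal{U}}$. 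Subtracting the equation for $z$ from this one then yields the PDE satisfied by $r$:
\begin{equation*}
-\nabla\cdot\bigl[(\mu + u\chi_{c})\nabla r\bigr] = \nabla\cdot\bigl[h\chi_{c}\nabla(q_{h}-q)\bigr],
\end{equation*}
again with homogeneous boundary data. Applying Theorem \ref{Theorem_1} one last time and estimating the right-hand side by
\begin{equation*}
\|h\chi_{c}\nabla(q_{h}-q)\|_{L^{2}(\Omega)} \le \|h\|_{L^{\infty}(\Omega_{c})} \|q_{h}-q\|_{V} \le C\|h\|_{\mathcal{U}}^{2},
\end{equation*}
we obtain $\|r\|_{V} = O(\|h\|_{\mathcal{U}}^{2}) = o(\|h\|_{\mathcal{U}})$, which is the desired Fréchet differentiability.

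The main obstacle I anticipate is the correct functional-analytic treatment of the product $h\,\nabla q$: one cannot close the estimates within $L^{2}$ alone, and it is precisely the choice $\mathcal{U} = H^{2}(\Omega_{c})$ combined with the two-dimensional Sobolev embedding $H^{2} \hookrightarrow L^{\infty}$ that makes both the well-posedness of the linearized problem and the quadratic-in-$h$ bound on the residual work. A secondary technical point is preserving uniform ellipticity under the perturbation $u \mapsto u+h$, which restricts the admissible directions to a sufficiently small ball in $\mathcal{U}$ but does not affect the derivative itself.
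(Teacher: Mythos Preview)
Your proposal is correct and follows essentially the same strategy as the paper: establish well-posedness and a linear-in-$h$ bound via Lax--Milgram and the embedding $H^{2}(\Omega_{c})\hookrightarrow L^{\infty}(\Omega_{c})$, then derive a PDE for the remainder and bootstrap to a quadratic-in-$h$ estimate. The only cosmetic difference is in how the remainder equation is organized: the paper keeps the \emph{perturbed} coefficient $\mu+(u+h)\chi_{c}$ on the left and obtains $\nabla\cdot(h\chi_{c}\nabla z)$ on the right, whereas you keep the \emph{unperturbed} coefficient $\mu+u\chi_{c}$ on the left (after first proving the Lipschitz bound $\|q_{h}-q\|_{V}\le C\|h\|_{\mathcal{U}}$) and obtain $\nabla\cdot(h\chi_{c}\nabla(q_{h}-q))$ on the right --- both routes yield the same $O(\|h\|_{\mathcal{U}}^{2})$ control on the remainder via the same embedding.
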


We can now show that for the OCP \eqref{functional_analysis}-\eqref{pde_analysis} there exists at least an optimal control. 
For the proof, we consider homogeneous Dirichlet boundary conditions, that is, $T_{o}=0$. This assumption can be easily relaxed employing the lifting operator.  \vspace{-0.2cm}

\begin{theorem}[Existence of an optimal control]\ \\ 
    Suppose that the assumptions of  {\em Theorem} \ref{Theorem_1} hold, 
    then there exists at least an optimal solution $(\hat{q},\hat{u})\in V \times \mathcal{U}$ of the optimal control problem \eqref{functional_analysis}- \eqref{pde_analysis}.
    
\end{theorem}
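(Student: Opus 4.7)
My plan is to apply the direct method of the calculus of variations. Since $J \ge 0$, the infimum $J^\star = \inf J$ is finite, so I fix a minimizing sequence $\{(u_n, q_n)\}$ with $q_n = \Theta(u_n)$. From the Tikhonov term in \eqref{functional_analysis} and a working boundedness assumption on the admissible set of controls compatible with the uniform ellipticity and coercivity hypotheses of Theorem \ref{Theorem_1} (so that $\hat\alpha \le \mu + u_n \chi_c \le M$ uniformly in $n$), I extract a uniform bound $\|u_n\|_{\mathcal U} \le C$. Theorem \ref{Theorem_1} then yields a uniform bound $\|q_n\|_V \le C'$.

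Passing to subsequences (not relabeled), I obtain $u_n \rightharpoonup \bar u$ weakly in $H^2(\Omega_c)$ and $q_n \rightharpoonup \bar q$ weakly in $V$. The cornerstone of the argument is the compact embedding $H^2(\Omega_c) \hookrightarrow C(\overline{\Omega_c})$ available in two space dimensions, which upgrades the convergence of the controls to $u_n \to \bar u$ strongly in $L^\infty(\Omega_c)$. In particular the uniform ellipticity and coercivity bounds pass to the limit, so $\bar u$ is still admissible.

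To identify $\bar q$ with $\Theta(\bar u)$, I pass to the limit in the weak formulation $a_n(\tilde q_n, v) = F v$ for every $v \in V$. The diffusion term splits as $\int_\Omega \mu \nabla \tilde q_n \cdot \nabla v \, d\Omega + \int_{\Omega_c} u_n \nabla \tilde q_n \cdot \nabla v \, d\Omega$: the first integral converges by weak $L^2$ convergence of $\nabla \tilde q_n$, and the second converges by combining strong $L^\infty$-convergence of $u_n$ with weak $L^2$-convergence of $\nabla \tilde q_n$. The boundary term on $\Gamma_d$ passes to the limit by compactness of the trace operator from $H^1(\Omega)$ to $L^2(\Gamma_d)$, and the lifting $R(T_0)$ term is handled analogously. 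Hence $\bar q$ satisfies the state equation driven by $\bar u$, i.e.\ $\bar q = \Theta(\bar u)$.

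The proof concludes by weak lower semicontinuity: both $u \mapsto \tfrac{\beta}{2} \int_{\Omega_c} u^2$ and $q \mapsto \tfrac{1}{2} \int_{\Omega_{obs}} (q-z)^2$ are convex and continuous on $L^2$, hence weakly sequentially lower semicontinuous, so $J(\bar u, \bar q) \le \liminf_{n \to \infty} J(u_n, q_n) = J^\star$, proving that $(\bar u, \bar q)$ attains the infimum. The main obstacle is the passage to the limit in the bilinear coupling $u_n \nabla q_n$, which is not granted by mere weak convergence of both factors; the choice $\mathcal U = H^2(\Omega_c)$ is precisely what allows strong convergence of one factor via Sobolev compactness in dimension two, and this is what makes the nonlinear OCP tractable by an otherwise standard compactness argument.
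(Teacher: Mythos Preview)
Your argument is essentially the paper's own: both run the direct method (the paper packages it as verifying the hypotheses of an abstract existence theorem from \cite{manzonioptimal}), and the decisive step---passing to the limit in $\int_{\Omega_c} u_n\nabla q_n\cdot\nabla\phi$ via the compact embedding $H^2(\Omega_c)\hookrightarrow\hookrightarrow L^\infty(\Omega_c)$, which upgrades weak $H^2$-convergence of $u_n$ to strong $L^\infty$-convergence so it can be paired with the merely weakly convergent $\nabla q_n$---is handled identically. The one point you honestly flag as a ``working assumption,'' namely the uniform $\mathcal U=H^2$ bound on the minimizing sequence (which neither the $L^2$ Tikhonov term in \eqref{functional_analysis} nor the pointwise ellipticity constraint supplies on its own), is treated just as tersely in the paper, which simply asserts that boundedness ``follows from the definition of $\mathcal U$.''
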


\begin{proof}
    {\em Theorem} \ref{Theorem_1} already ensures that, for all fixed $u\in \mathcal{U}$, there exists a unique state $q=q(u)$. Thanks to {\em Theorem} \ref{Theorem_2}, the control-to-state map is $\mathcal{F}$-differentiable. 
    Then, let us rewrite the state equation by introducing the operator  $G: V \times \mathcal{U} \rightarrow V^{\star}$, given by
\begin{equation*}
    G(q,u) = Aq + B(u,q) - s,
\end{equation*}
where 
\begin{equation*}
    a(q,\phi) = \langle Aq + B(u,q) , \phi \rangle 
\end{equation*}
with $A:V\rightarrow{}V^{\star}$ and  $B:\mathcal{U}\times V\rightarrow{}V^{\star}$ given by 
\begin{equation*}
     \langle Aq,\phi\rangle = \mu\int_{\Omega} \nabla{\phi} \cdot \nabla{q} \,d\Omega - \int_{\Gamma_{d}} q\phi \,d\Gamma, \qquad   
     \langle B(u,q),\phi\rangle = \int_{\Omega_{c}} u\nabla{q} \cdot \nabla{\phi} \,d\Omega;
\end{equation*}
finally,
\begin{equation*}
    \langle s,\phi\rangle = \int_{\Omega} s\phi \,d\Omega.
\end{equation*}
The state equation therefore reads as follows:  
\begin{equation*}
        \langle G(q,u),\phi\rangle =\langle Aq + B(u,q) - s , \phi \rangle  = 0 \qquad \forall \phi\in V.
\end{equation*}
The existence of an optimal solution 
$(\hat{q},\hat{u})\in V \times \mathcal{U}$ of the OCP  \eqref{functional_analysis}- \eqref{pde_analysis} is ensured once the following assumptions are verified (see, e.g., \cite[Theorem 9.4]{manzonioptimal}):
\begin{enumerate}
    \item $\inf\limits_{(q,u)} J(q,u)= \zeta > -\infty$ and the set of feasible points is nonempty;
    \item a minimizing sequence $\{q_{k},u_{k}\}$ is bounded in $V\times\mathcal{U}$;
    \item the set of feasible points is weakly sequentially closed in $V\times\mathcal{U}$;
    \item $J$ is sequentially weakly lower semicontinuous. 
\end{enumerate}
\vspace{-0.15cm}

Indeed, considering hereon all the limits taken for $n \rightarrow \infty$, we have that: \vspace{-0.1cm}
    \begin{enumerate}
        \item the cost functional is bounded from below, since
        \begin{equation}
        \inf\limits_{(q,u)} J(q,u)= \zeta > -\infty,
    \end{equation}
    and the set of feasible points is non-empty since $J(q,u) \geq 0$;
       \item a minimizing sequence: $\{(q_{k},u_{k})\}_{k\in \mathbb{N}}\subseteq V\times \mathcal{U}$ such that $J(q_{k},u_{k}) \rightarrow \zeta$ and 
    $G(q_{k},u_{k})=0$ 
    is bounded. This follows from the definition of $\mathcal{U}$ and the bound \eqref{lax-milgram bound}; 
    \item to prove that the set of feasible points is {\em weakly} sequentially closed in $V\times \mathcal{U}$, we consider the following sequences:
    \begin{equation*}
    u_{n} \rightharpoonup u \quad \textrm{$in \ $  $\mathcal{U}$},\quad q_{n} \rightharpoonup q \quad \textrm{$in$ $V$},\quad Aq_{n}  \overset{\star}{\rightharpoonup} \Lambda \quad in\: V^\star,\quad B(u_{n},q_{n})  \overset{\star}{\rightharpoonup} \Phi \quad in\: V^\star. 
    \end{equation*}
    We have that $(q,u)\in V\times \mathcal{U}$ since $V$ and $\mathcal{U}$ are {\em weakly} sequentially closed.
    Using this definition we, first of all, prove that $\Lambda = Aq$, showing that: 
    \begin{equation*} \label{1weak1}
        \langle Aq_{n} - Aq , \phi \rangle  \rightarrow 0 \quad \forall\phi\in V.
    \end{equation*}
    This amounts to show that
    \begin{equation*}
        \mu\int_{\Omega}( \nabla{q}_{n}-\nabla{q})\cdot\nabla{\phi}\,d\Omega - \int_{\Gamma_{d}} (q_{n}-q)\phi \,d\Gamma \rightarrow 0.
    \end{equation*}
    We consider in this step the equivalent definition of scalar product in $V=H^1_{\Gamma_{o}}(\Omega)$:
    \begin{equation*}
        (u,v)_{V} = \int_{\Omega} \nabla{u}\cdot\nabla{v} \,d\Omega 
    \end{equation*}
   so that we can exploit the $weak$ convergence in the reflexive space $V$. In particular, 
    \begin{equation*}
        q_{n} \rightharpoonup q \quad \text{in $V$} \iff  \nabla{q}_{n} \rightharpoonup \nabla{q} \quad \text{in $L^2(\Omega)$}.
    \end{equation*}
    Furthermore, by the continuity of the trace operator (see e.g.  \cite[Appendix A]{manzonioptimal}),  
    $q_{n}\rightharpoonup q$ in $L^2(\Gamma_{d})$. 
    We now prove that 
    \begin{equation}
        \langle B(u_{n},q_{n}),\phi\rangle  \rightarrow  \langle B(u,q),\phi\rangle   \qquad \forall \phi\in V, 
    \end{equation}
    that is,
    \begin{equation} \label{convergence_u_n_u}
        \int_{\Omega_{c}} (u_{n}\nabla{q_{n}}-u\nabla{q})\cdot \nabla{\phi} \,d\Omega   \rightarrow 0  \qquad \forall \phi\in V
    \end{equation}
    and so $B(u,q)=\Phi$. The relation
    \eqref{convergence_u_n_u} is equivalent to  
    \begin{equation} \label{convergence_u_n_u_expanded}
        \int_{\Omega_{c}} \nabla{q}_{n}\cdot\nabla{\phi}(u_{n}-u) \,d\Omega + \int_{\Omega_{c}} ( \nabla{q_{n}} - \nabla{q} ) \cdot \nabla{\phi}u \,d\Omega  \rightarrow 0.
    \end{equation} 
Regarding the first term  of \eqref{convergence_u_n_u_expanded},
    \begin{equation*}
        \lim_{n\to \infty} \int_{\Omega_{c}} ( u - u_{n} ) \nabla{q}_{n} \cdot \nabla{\phi} \,d\Omega = 0 
    \end{equation*}
indeed, using the compact embedding $H^2(\Omega_{c})\hookrightarrow \hookrightarrow  L^{\infty}(\Omega_{c})$ valid in dimension $n=2,3$,
    \begin{equation*}
        \left|\int_{\Omega_{c}} (u-u_{n})\nabla{q}_{n}\cdot{\nabla{\phi}} \,d\Omega \right| \leq \|u-u_{n}\|_{L^{\infty}(\Omega_{c})}\|\nabla{\phi}\|_{L^2(\Omega)}\|\nabla{q}_{n}\|_{L^2(\Omega)} \rightarrow 0
    \end{equation*}
    where $\|u-u_{n}\|_{L^{\infty}(\Omega_{c})}\rightarrow 0$ and $\|\nabla{q}_{n}\|_{L^2(\Omega)}\leq M_{weak}$, with $M_{weak}>0$ since every weak convergent sequence is bounded.    
    For the second term in \eqref{convergence_u_n_u_expanded} we have that \vspace{-0.1cm}
    \begin{equation} \label{weakconv1}
        \lim_{n\to \infty}  \left|\int_{\Omega_{c}} ( \nabla{q_{n}} - \nabla{q} ) \cdot \nabla{\phi}u \,d\Omega\right| \rightarrow 0  \vspace{-0.1cm}
    \end{equation}
    where we have used again the fact that $\nabla{q_{n}} \rightharpoonup \nabla{q}$ in $L^2(\Omega)$ and that  
    $\nabla{\phi}u = \psi \in L^2(\Omega)$ since $\phi \in H^1_{\Gamma_{d}}(\Omega)$ and $u\in \mathcal{U}\subseteq L^{\infty}(\Omega_{c})$.   
    Gathering all these information together, we can conclude that $G(q,u)=0$ in $V^\star$ and so by the arbitrariness of the sequence, we have that the set of feasible points is {\em weakly} sequentially closed;
    
    \item Since $J(q,u)$ is convex and continuous in $V\times\mathcal{U}$, it is also lower semi-continuous with respect to the weak convergence: \vspace{-0.15cm}
    \begin{equation*}
        J(q,u) \leq \liminf\limits_{n\rightarrow \infty} J(q_{n},u_{n}) = \zeta. \vspace{-0.15cm}
    \end{equation*}
    \end{enumerate}
    Therefore, there exists at least an optimal solution $(\hat{q},\hat{u}) \in V \times \mathcal{U}$ of the OCP \eqref{functional_analysis}- \eqref{pde_analysis}.
\end{proof}

\subsection{Well-posedness analysis of the transient Optimal Control Problem }

We now turn to the analysis of the time-dependent problem, where we assume to be able to change the diffusivity of the material inside the cloak in both space and time. We start by formulating the time-dependent problem and prove that is well-posed. The time-dependent OCP can be written as \vspace{-0.1cm}
\begin{equation} \label{functional_td_analysis}
    \min\limits_{u(t),q(t)} J(u(t),q(t)) = \frac{1}{2}\int_{0}^{T}\int_{\Omega_{obs}} (q(t)-z(t))^2 \,d\Omega dt+ \frac{\beta}{2} \int_{0}^{T}\int_{\Omega_{c}} u(t)^2 \,d\Omega dt \vspace{-0.1cm}
\end{equation}
\begin{equation} \label{pde_td_analysis}
    \textrm{subject to} 
    \begin{cases}
    \begin{array}{ll}
    \displaystyle \frac{\partial{q}}{\partial{t}}(\textbf{x},t) + \nabla{}\cdot\{-[(\mu+  
    u(\textbf{x},t)\chi_{c}(\textbf{x}))I ]{\nabla{q(\textbf{x},t)}}\}=s(\textbf{x} )&\text{in $\Omega\times(0,T)$}
          \\
          q(\textbf{x},t) = 0&\text{on $\Gamma_{o}\times(0,T)$}
          \\
          -\mu\nabla{q}\cdot \textbf{n}+ q(\textbf{x}) = 0&\text{on $\Gamma_{d}\times(0,T)$} \\
          q(\textbf{x},0) = q_{0} = 0 &\text{in $\Omega\times\{0\}$}. \vspace{-0.1cm}
    \end{array}
    \end{cases}
\end{equation}
We now select suitable functional spaces to deal with the evolution problem involving the state dynamics, see, e.g.,  \cite[Chapter 7]{evans} or \cite[Chapter 7]{manzonioptimal} for further details. State and control spaces are chosen as  
$Y=H^1(0,T;V,V^{\star})$ and $\mathcal{U} = \{ u \in L^{\infty}(0,T;H^2(\Omega_{c})), \frac{\partial{u}}{\partial{t}}\in L^2(0,T;H^2(\Omega_{c})^{\star})\}$;
$\mathcal{U}$ is a Banach space with the norm \vspace{-0.15cm}
\begin{equation*}
    \|u\|_{\mathcal{U}} = \left( \|u\|_{L^{\infty}(0,T;H^2(\Omega_{c}))}^2 + \|\frac{\partial{u}}{\partial{t}}\|_{L^2(0,T;H^2(\Omega_{c})^{\star})}^2 \right)^{1/2};
\end{equation*}
this can be proven similarly to what was done in \cite[Theorem 11.4]{Chipot_2000} using the embedding  \vspace{-0.1cm}
\begin{equation*}
     L^{\infty}(0,T;H^{2}(\Omega_{c}))\hookrightarrow \mathcal{D}'(0,T;H^{2}(\Omega_{c})) \hookrightarrow \mathcal{D}'(0,T;H^{2}(\Omega_{c})^{\star}). \vspace{-0.1cm}
\end{equation*}
We consider the Hilbert triplet $V\subset H\subset V^{\star}$, with $H=L^2(\Omega)$, and define the parabolic operator  \vspace{-0.1cm}
\begin{equation*}
    \mathcal{P}q = \frac{\partial{q}}{\partial{t}} + \nabla{}\cdot\{-[(\mu+  
    u\chi_{c}(\textbf{x}))I ]{\nabla{q}}\}. \vspace{-0.1cm}
\end{equation*}
The variational formulation of \eqref{pde_td_analysis} then reads as follows: 
\begin{equation} \label{weak_td_problem}
          \int_{0}^{T} \langle\frac{\partial{q}}{\partial{t}}(t),\phi(t)\rangle dt + \int_{0}^{T} a(q(t),\phi(t))dt =  \int_{0}^{T} \langle s(t),\phi(t) \rangle dt \quad \forall \phi\in L^{2}(0,T;V),
\end{equation}

where the bilinear form $a(\cdot,\cdot)$ has already been defined in \eqref{bilinear_form_and_linear_functional}. 
\\
We assume that the operator $\mathcal{P}$ is uniformly parabolic: there exists $\alpha,M>0$: \vspace{-0.1cm}
\begin{equation*} 
    \alpha\lvert\textbf{y}\rvert^2 \leq \sum_{i,j=1}^{d}k_{i,j}(\textbf{x},t)y_{i}y_{j},\quad 
    \lvert k_{i,j}(\textbf{x},t) \rvert \leq M, \quad \forall \textbf{y}\in\mathbb{R}^d,\quad \text{a.e. in $Q_{c}^{T} = \Omega_{c}\times (0,T)$}, \vspace{-0.1cm}
\end{equation*}
where $k_{i,j}$ is the generic term of the matrix $K = ( \mu + u\chi_{c} )I  $.
Under this hypothesis, following the derivation of  {\em Theorem} \ref{Theorem_1}, we can show that the bilinear form $a$ is continuous and coercive.
Then, referring to \cite[Chapter 7.4.1]{manzonioptimal}, we state the following.  \vspace{-0.1cm}
\begin{prop}[Well posedness of the state problem \eqref{weak_td_problem}]\label{Prop_1}\ \\
    Let $s\in L^{2}(0,T;V^{\star})$, $q_{0}\in H$. Suppose that the operator  $\mathcal{P}$ is uniformly parabolic,
Then there exists a unique $q\in H^1(0,T;V,V^{\star})\subseteq C^{0}([0,T];H)$ weak solution of the abstract equation \eqref{weak_td_problem}. 
\end{prop}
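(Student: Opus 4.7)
The plan is to recast \eqref{weak_td_problem} as an abstract linear evolution equation on the Hilbert triplet $V \subset H \subset V^{\star}$ and invoke the standard J.-L.~Lions existence theorem for parabolic variational problems, as formulated in \cite[Chapter 7.4.1]{manzonioptimal}. Concretely, for a fixed control $u\in\mathcal{U}$ I would regard $a(w,v;t)$, defined by \eqref{bilinear_form_and_linear_functional} with the coefficients $(\mu+u\chi_{c})$ now time-dependent, as a family of bilinear forms on $V$ parametrized by $t\in(0,T)$. Once measurability in $t$, continuity, and a (G{\aa}rding-type) coercivity are established with constants independent of $t$, a Faedo--Galerkin argument delivers a unique $q\in H^{1}(0,T;V,V^{\star})$ solving \eqref{weak_td_problem}, and the classical Lions--Magenes embedding $H^{1}(0,T;V,V^{\star})\hookrightarrow C^{0}([0,T];H)$ makes the initial condition $q(0)=0\in H$ meaningful.

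First I would verify the three structural properties of $a(\cdot,\cdot;t)$. Measurability of $t\mapsto a(w,v;t)$ for fixed $w,v\in V$ follows from $u\in L^{\infty}(0,T;H^{2}(\Omega_{c}))$, which makes the entries $k_{ij}(\mathbf{x},t)=(\mu+u\chi_{c})\delta_{ij}$ measurable in $(\mathbf{x},t)$. Continuity is a direct consequence of the uniform upper bound $|k_{ij}|\le M$ together with the trace inequality to control the Robin boundary term. For coercivity, the uniform parabolicity yields
\begin{equation*}
a(w,w;t)\;\ge\;\alpha\,\|\nabla w\|_{L^{2}(\Omega)}^{2}\;-\;\|w\|_{L^{2}(\Gamma_{d})}^{2};
\end{equation*}
since $w=0$ on $\Gamma_{o}$, Poincar{\'e}--Friedrichs bounds $\|w\|_{V}^{2}$ by $\|\nabla w\|_{L^{2}}^{2}$, and the trace inequality allows me to absorb the boundary contribution at the cost, in the worst case, of a G{\aa}rding term: there exist $\lambda\ge 0$ and $\tilde{\alpha}>0$, independent of $t$, with $a(w,w;t)+\lambda\|w\|_{H}^{2}\ge \tilde{\alpha}\|w\|_{V}^{2}$.

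With these three ingredients in hand, I would apply the abstract Lions theorem. If $\lambda>0$, the rescaling $\hat{q}(t)=e^{-\lambda t}q(t)$ produces an equivalent problem with strictly coercive bilinear form, and then a Galerkin scheme based on an orthonormal basis of $V$ gives finite-dimensional solutions $q_{m}$ satisfying energy estimates uniform in $m$ in $L^{\infty}(0,T;H)\cap L^{2}(0,T;V)$; a dual estimate controls $\partial_{t}q_{m}$ in $L^{2}(0,T;V^{\star})$, and compactness plus weak limits yields a solution $q\in H^{1}(0,T;V,V^{\star})$. Uniqueness follows by testing the equation for the difference of two solutions against the difference itself and invoking Gronwall's lemma, again relying on the G{\aa}rding inequality.

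The main technical obstacle is the negative sign in front of the Robin trace term in $a$, which prevents textbook coercivity; handling it requires the fine balance described above between the Poincar{\'e}--Friedrichs constant (available thanks to the nontrivial Dirichlet part $\Gamma_{o}$) and the trace constant, or equivalently the G{\aa}rding-plus-rescaling device. Apart from this, the argument is a routine transcription of the abstract parabolic theory cited in \cite[Chapter 7.4.1]{manzonioptimal}, so no further originality beyond verifying hypotheses is needed.
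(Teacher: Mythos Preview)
Your approach is correct and coincides with the paper's: the paper does not give a proof for this proposition but simply notes that continuity and coercivity of $a(\cdot,\cdot)$ follow as in Theorem~\ref{Theorem_1} and then refers to \cite[Chapter~7.4.1]{manzonioptimal} for the abstract parabolic existence result, which is precisely the Lions/Faedo--Galerkin machinery you outline. The only slight difference is that the paper asserts genuine coercivity (under the sufficient condition $\hat{\alpha}>(1+C_p^2)C_{tr}^2$ from the proof of Theorem~\ref{Theorem_1}) rather than invoking a G{\aa}rding inequality plus exponential rescaling, but your treatment is a harmless generalization.
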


\vspace{-0.1cm}

Starting from \eqref{weak_td_problem}, we can derive the following estimates on the norm of $q$:
\begin{equation*}
   \|q\|_{L^2(0,T;V)}^2 \leq K_{2}( s , q(0) , \tilde{\alpha}, T ), \label{L2_V}
   \end{equation*}
   \begin{equation} 
   \|q\|_{Y}^2 \leq  K_{2}(s,q(0),\tilde{\alpha},T) + 2\{ \tilde{M}^2[K_{2}(s,q(0),\tilde{\alpha},T)]+|s|^2 T \}\label{Y_norm},
\end{equation}
where
\[
K(s,q(0),\tilde{\alpha},T)= \frac{|s|^2T}{\tilde{\alpha}} + \|q(0)\|_{L^2(\Omega)}^2>0,
\] 
\[
K_{2}( s , q(0) , \tilde{\alpha} , T)= \frac{1}{\tilde{\alpha}} \left( |s|K(s,q(0),\tilde{\alpha},T)T + \frac{1}{2} \|q(0)\|_{L^2(\Omega)}^2 \right),
\] 
being $\tilde{M}$ and $\tilde{\alpha}$ the continuity and the coercivity constants of the form  \eqref{bilinear_form_and_linear_functional}, respectively. Using similar arguments to the 
stationary case, we can prove the differentiability of the control-to-state map for the time-dependent case: \vspace{-0.2cm} 

\begin{theorem}[Differentiability of the control to state map] \label{Theorem_4}\ \\
The {\em control-to-state} map: $\Theta: \mathcal{U}\rightarrow Y$ is Fr\'echet differentiable and the directional derivative: $z = \Theta'(u)h\in Y$ at $u\in \mathcal{U}$ in the direction $h\in\mathcal{U}$ is given by the solution of 
\begin{equation}\label{pde_z_td_analysis}
 \begin{cases}
 \begin{array}{ll}
        \frac{\partial{z}}{\partial{t}} + \nabla{}\cdot[ - (\mu+u\chi_{c})\nabla{z}] = \nabla{}\cdot[h\chi_{c}\nabla{q}(u)] & in \:\:\Omega \times (0,T) \\
        z = 0 & on \:\: \Gamma_{o}\times(0,T)\\
        -\mu\nabla{z}\cdot \textbf{n} + z = 0 & on \:\: \Gamma_{d}\times(0,T)\\
        z(\textbf{x},0) = 0 & in \:\: \Omega\times \{0\}.
    \end{array}
    \end{cases}
\end{equation}
\begin{proof}
    see the Appendix \ref{proofTHM4}.
\end{proof}
\end{theorem}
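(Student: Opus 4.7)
The plan is to mimic the standard Fréchet-differentiability argument for bilinear parabolic control problems, relying on Proposition \ref{Prop_1} applied twice: once to define the candidate derivative, and once to control the remainder.

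First I would set $q = \Theta(u)$, $q^h = \Theta(u+h)$, and introduce $w = q^h - q$. Subtracting the two state equations (both with homogeneous initial and Dirichlet data), one finds that $w \in Y$ solves
\begin{equation*}
\begin{cases}
\displaystyle \frac{\partial w}{\partial t} - \nabla \cdot [(\mu + u\chi_c)\nabla w] = \nabla\cdot [h\chi_c \nabla q^h] & \text{in } \Omega \times (0,T),\\
-\mu \nabla w \cdot \nor + w = 0 & \text{on } \Gamma_d \times (0,T), \\
w = 0 & \text{on } \Gamma_o \times (0,T), \\
w(\x,0) = 0 & \text{in } \Omega \times \{0\}.
\end{cases}
\end{equation*}
Next I would define $z \in Y$ as the solution of \eqref{pde_z_td_analysis}; its existence, uniqueness and $Y$-bound follow from Proposition \ref{Prop_1}, since for fixed $q \in L^2(0,T;V)$ the right-hand side $\nabla\cdot[h\chi_c \nabla q]$ belongs to $L^2(0,T;V^\star)$ (use that $\langle \nabla\cdot[h\chi_c\nabla q],\phi\rangle = -\int_{\Omega_c} h \nabla q \cdot \nabla \phi\, d\Omega$ together with the embedding $H^2(\Omega_c) \hookrightarrow L^\infty(\Omega_c)$ valid for $n=2,3$).

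The central quantity is the remainder $\rho = w - z = \Theta(u+h) - \Theta(u) - z$. By linearity of the problem in the principal part, $\rho$ satisfies the same parabolic system as $w$ but with forcing $\nabla\cdot[h\chi_c \nabla w]$ (since $\nabla q^h = \nabla q + \nabla w$). Applying the a-priori estimate \eqref{Y_norm} to $\rho$ yields
\begin{equation*}
\|\rho\|_Y \leq C_1 \|\nabla\cdot[h\chi_c \nabla w]\|_{L^2(0,T;V^\star)} \leq C_2 \|h\|_{L^\infty(0,T;L^\infty(\Omega_c))} \|w\|_{L^2(0,T;V)},
\end{equation*}
where the last inequality is obtained by pairing with a test $\phi \in V$ as above and using Hölder's inequality. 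A second application of \eqref{Y_norm} to $w$ gives
\begin{equation*}
\|w\|_{L^2(0,T;V)} \leq C_3 \|h\|_{L^\infty(0,T;L^\infty(\Omega_c))} \|q^h\|_{L^2(0,T;V)},
\end{equation*}
and since $\|q^h\|_{L^2(0,T;V)}$ stays bounded uniformly in $h$ for $\|h\|_\mathcal{U}$ small (again by Proposition \ref{Prop_1} and the uniform parabolicity, which is preserved under small perturbations of $u$ in $\mathcal{U}$), we obtain
\begin{equation*}
\|\rho\|_Y \leq C_4 \|h\|_\mathcal{U}^2,
\end{equation*}
after invoking the continuous embedding $\mathcal{U} \hookrightarrow L^\infty(0,T;L^\infty(\Omega_c))$ induced by $H^2(\Omega_c) \hookrightarrow L^\infty(\Omega_c)$. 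This is precisely $\|\rho\|_Y = o(\|h\|_\mathcal{U})$, identifying $z$ as the Fréchet derivative $\Theta'(u)h$. Linearity and continuity of the map $h \mapsto z$ from $\mathcal{U}$ into $Y$ follow from the corresponding a-priori estimate, concluding the proof.

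The main obstacle, as in the stationary case, is the $L^\infty$-type control needed to handle the product $h\chi_c \nabla w$ in $V^\star$: without the $H^2$-in-space regularity in the definition of $\mathcal{U}$ and the Sobolev embedding $H^2(\Omega_c) \hookrightarrow L^\infty(\Omega_c)$, the bootstrap from $\|w\|_{L^2(0,T;V)} = O(\|h\|_\mathcal{U})$ to $\|\rho\|_Y = O(\|h\|_\mathcal{U}^2)$ would not close. A secondary technical point is to ensure uniform parabolicity for all perturbed controls $u+h$ with $\|h\|_\mathcal{U}$ sufficiently small, so that the constants appearing in \eqref{Y_norm} can be taken independent of $h$.
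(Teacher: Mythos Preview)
Your proposal is correct and follows essentially the same strategy as the paper: derive a parabolic equation for the remainder, apply the a-priori estimate of Proposition~\ref{Prop_1} twice, and use the embedding $H^2(\Omega_c)\hookrightarrow L^\infty(\Omega_c)$ to close the $O(\|h\|_{\mathcal U}^2)$ bound.

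The one algebraic difference is in how the remainder equation is organized. The paper keeps the principal part at the perturbed coefficient $\mu+(u+h)\chi_c$ and obtains the forcing $-h\chi_c\nabla z$, so the final bound reads $\|R\|_Y \le C\|h\|_{\mathcal U}^2\|q\|_{L^2(0,T;V)}$ with $\|q\|$ independent of $h$; you instead keep the principal part at the fixed coefficient $\mu+u\chi_c$ and get the forcing $h\chi_c\nabla w$, which makes the constants in the $\rho$-estimate automatically $h$-independent but pushes the uniform-in-$h$ issue onto the bound for $\|q^h\|_{L^2(0,T;V)}$. Both versions are valid and require, as you correctly flag, that $u+h$ remain uniformly parabolic for small $\|h\|_{\mathcal U}$.
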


To prove the existence of an optimal pair $(\hat{q},\hat{u})\in Y\times\mathcal{U}$, the state equation is recast in the dual space $V^\star$ for a.e. $t$ introducing the operator $G:V\times \mathcal{U} \rightarrow V^\star$ given by
\begin{equation*}
    G(q,u) = \begin{pmatrix} \dot{q}(t) + Aq(t) + B(u(t),q(t)) - s \\
    q(0) = q_{0}
    \end{pmatrix}.
\end{equation*} 
The bilinear form $a(q(t),\phi)$ can be split so that
\begin{equation*}
    a(q(t),\phi) = \langle Aq(t) + B(u(t),q(t)) , \phi \rangle  
\end{equation*}
with $A:L^{2}(0,T;V)\rightarrow{}L^{2}(0,T;V^\star)$:
\begin{equation*}
     \langle (Aq)(t),\phi(t)\rangle  = \mu\int_{\Omega} \nabla{\phi}(t) \cdot \nabla{q}(t)  \,d\Omega - \int_{\Gamma_{d}} q(t)\phi(t) \,d\Omega,
\end{equation*}
while the control operator $B:\mathcal{U}\times L^{2}(0,T;V)\rightarrow{}L^{2}(0,T;V^\star)$ is defined as
\begin{equation*}
     \langle B(u(t),q(t)),\phi(t)\rangle  = \int_{\Omega_{c}} u(t)\nabla{q}(t) \cdot \nabla{\phi}(t) \,d\Omega
\end{equation*}
We can now prove an existence theorem showing that there exist at least an optimal solution to the time-dependent problem. We have the following
\begin{theorem}[Existence of an optimal control]\ \\ 
        Suppose that the assumptions of  {\em Proposition} \ref{Prop_1} hold, 
        then there exists at least an optimal solution $(\hat{q},\hat{u})\in Y \times \mathcal{U}$ of the optimal control problem \eqref{functional_td_analysis}- \eqref{pde_td_analysis}.
\begin{proof} (Sketch)
    We use Theorem 9.4 of  \cite{manzonioptimal} (see Theorem 3 above, and its proof). 
    Hypotheses $(i)$, $(ii)$, $(iv)$ are trivially satisfied. Notice that \eqref{Y_norm} bounds the $\|\cdot\|_{H^1(0,T;V,V^\star)}$-norm.
    
    Hypothesis $(ii)$ asks to show that the set of feasible points is $weakly^\star$ sequentially closed in $Y\times \mathcal{U}$. The following sequences are considered:
    \begin{align*}
    \begin{split}
    &u_{n} \overset{\star}{\rightharpoonup} u \quad \textrm{$in$  $\mathcal{U}$}, \quad q_{n} \rightharpoonup q \quad \textrm{$in$ $Y$}, \quad Aq_{n} \overset{\star}{\rightharpoonup} \Xi \quad in\: L^2(0,T;V^\star), \quad B(u_{n},q_{n})  \overset{\star}{\rightharpoonup} \Phi \quad in\: L^2(0,T;V^\star)
    \end{split}
    \end{align*}
    with $G(q_{n},u_{n}) = 0\quad \forall n\in\mathbb{N}$.
    $(q,u)\in Y\times \mathcal{U}$ since $\mathcal{U}$ is $weakly^\star$ sequentially closed. The proof that $G(q,u)=0$ in $W$ is done in three substeps:
     \begin{itemize}
        \item $Aq=\Xi$ since $\nabla{q_{n}}\rightharpoonup\nabla{q}$ $in$ $L^2(0,T;L^2(\Omega))$ and $q_{n}\rightharpoonup q$ $in$ $L^2(0,T;L^2(\Gamma_{d}))$.
        \item $B(u,q)=\Phi$, since, $\forall \phi\in L^2(0,T;V)$
        \begin{equation*}
            \int_{0}^{T} \langle B(u_{n}(t),q_{n}(t)),\phi(t)\rangle  dt \rightarrow \int_{0}^{T} \langle B(u(t),q(t)),\phi(t)\rangle  dt,
        \end{equation*}
        that is,
        \begin{equation} \label{B_equal_phi}
            \int_{0}^{T} \int_{\Omega_{c}} (u_{n}(t)\nabla{q_{n}(t)}-u(t)\nabla{q}(t))\cdot \nabla{\phi}(t) \,d\Omega dt  \rightarrow 0,
        \end{equation}
        adding and subtracting to \eqref{B_equal_phi} $u\nabla{q}_{n}\cdot\nabla{\phi}$ it suffices to note that     
        \begin{equation*}
            \lim_{n\to \infty} \int_{0}^{T}\int_{\Omega_{c}} ( u(t) - u_{n}(t) ) \nabla{q}(t) \cdot \nabla{\phi}(t) \,d\Omega dt = 0 
        \end{equation*}
        
        that can be proven exploiting the compact embedding $H^{2}(\Omega_{c})\hookrightarrow\hookrightarrow L^{\infty}(\Omega_{c})$ in dimension $n=2,3$.
         In fact, using Aubin-Lions Lemma (see e.g. \cite{AubinLions,Chen_2013,Simon1986}) holds the following embedding
        $\mathcal{U}\hookrightarrow \hookrightarrow L^{\infty}(0,T;L^{\infty}(\Omega_{c}))$. We then have:
        \begin{align*}
        \begin{split}
            &\left| \int_{0}^{T}\int_{\Omega_{c}} (u(t)-u_{n}(t))\nabla{q}_{n}(t)\cdot{\nabla{\phi}}(t) \,d\Omega dt \right| \leq \\
            &\leq \|u-u_{n}\|_{L^{\infty}(0,T;L^{\infty}(\Omega_{c}))}\|\nabla{\phi}\|_{L^2(0,T;L^2(\Omega))}\|\nabla{q}_{n}\|_{L^2(0,T;L^2(\Omega))} \rightarrow 0
        \end{split}
        \end{align*}
        where $\|u-u_{n}\|_{L^{\infty}(0,T;L^{\infty}(\Omega_{c}))}\rightarrow 0$. Notice that , being a weak convergence sequence, $\|\nabla{q}_{n}\|_{L^2(0,T;L^2(\Omega))}$ is bounded (see \cite[Proposition A.3.]{manzonioptimal}). 
        Similarly, 
        \begin{equation*} 
            \int_{0}^{T}\int_{\Omega_{c}} ( \nabla{q_{n}}(t) - \nabla{q}(t) ) \cdot \nabla{\phi}(t)u(t) \,d\Omega dt 
        \end{equation*}
        vanishes using the fact that 
        $u\in \mathcal{U}\subseteq L^{\infty}(0,T;L^{\infty}(\Omega_{c}))$ and then $\nabla{\phi}u\in L^2(0,T;L^2(\Omega))$, and since $\nabla{q}_{n}\rightharpoonup \nabla{q}$ in $L^2(0,T;L^2(\Omega)$.
        \item 
        \begin{equation} \label{q_n_to_q_dot}
            \int_{0}^{T} \langle\dot{q}_{n}(t),\phi\rangle dt \rightarrow \int_{0}^{T} \langle\dot{q}(t),\phi\rangle dt \qquad \phi\in L^2(0,T;V)
        \end{equation}
        integrating by parts with: $\phi = w\psi$, $w\in V,\psi\in C^{1}[0,T]$, $\psi(T)=0$, we derive
        \begin{equation*}
            \int_{0}^{T} \langle\dot{q}_{n}(t),\phi\rangle dt = -\int_{0}^{T} (q_{n},w)_{L^2(\Omega)}\psi'(t) dt - (q_{n}(0),w)_{L^2(\Omega)}\psi(0),
        \end{equation*}
        analogously,
        \begin{equation*}
            \int_{0}^{T} \langle\dot{q}(t),\phi\rangle dt = -\int_{0}^{T} (q,w)_{L^2(\Omega)}\psi'(t) dt - (q(0),w)_{L^2(\Omega)}\psi(0);
        \end{equation*}
        \eqref{q_n_to_q_dot} is proved once we notice that since $q_{n} \rightharpoonup q$ in $Y$, we have that $q_{n}(0) \rightharpoonup q(0)$ in $L^2(\Omega)$, furthermore $w\psi'\in L^2(0,T;V)$ and $w\psi(0)\in V$. 
        \end{itemize}
        
    Gathering all this information together we can conclude that $G(q,u)=0$ in $W=V^{\star}$; by the arbitrariness of the sequence taken, the set of feasible points is $weakly^\star$ sequentially closed.
\end{proof}
\end{theorem}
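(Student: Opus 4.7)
The plan is to apply the same general existence result (Theorem 9.4 of \cite{manzonioptimal}) invoked in the stationary case, but now in the product space $Y \times \mathcal{U}$ endowed with weak and weak-$\star$ topologies respectively. The four hypotheses to be verified are the same as in Theorem 3. Proposition \ref{Prop_1} and Theorem \ref{Theorem_4} already provide, respectively, well-posedness of the state equation and Fr\'echet differentiability of the control-to-state map, so the novelty compared to the stationary proof lies entirely in two technical points: handling the time-derivative term $\dot q$ in the abstract state equation $\dot q + Aq + B(u,q) = s$, and dealing with the weak-$\star$ (rather than weak) convergence of the control in $\mathcal{U}$, which is a Banach space with an $L^\infty$-in-time component.

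Hypotheses (i), (ii) and (iv) are routine. Since $J\ge 0$ and the feasible set is nonempty by Proposition \ref{Prop_1}, the infimum $\zeta$ is finite. For any minimizing sequence $\{(q_n,u_n)\}$ with $G(q_n,u_n)=0$, the uniform parabolicity of $\mathcal{P}$ makes the coercivity constant $\tilde\alpha$ independent of $u_n$, so the a priori bound \eqref{Y_norm} yields uniform control of $\|q_n\|_Y$; boundedness of $u_n$ in $\mathcal{U}$ then follows from the coercivity of the cost functional combined with the structural definition of $\mathcal{U}$. Finally, $J$ is the sum of squares of affine continuous functionals of $(q,u)$, hence convex and continuous on $Y\times\mathcal{U}$, hence weakly lower semicontinuous.

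The crux is hypothesis (iii): weak-$\star$ sequential closure of the feasible set. I would extract subsequences $u_n \overset{\star}{\rightharpoonup} u$ in $\mathcal{U}$ and $q_n \rightharpoonup q$ in $Y$, together with weak limits $Aq_n \overset{\star}{\rightharpoonup} \Xi$ and $B(u_n,q_n)\overset{\star}{\rightharpoonup} \Phi$ in $L^2(0,T;V^\star)$, and then identify every limit so that $(q,u)$ satisfies the state equation. This decomposes into three tasks: (a) show $\Xi = Aq$, which follows at once from weak convergence of $\nabla q_n$ in $L^2(0,T;L^2(\Omega))$ and of $q_n|_{\Gamma_d}$ in $L^2(0,T;L^2(\Gamma_d))$ via continuity of the trace; (b) show $\Phi = B(u,q)$, the genuinely nonlinear step; (c) identify the weak limit of $\dot q_n$ as $\dot q$, which I would do by testing against factorized functions $\phi(x,t) = w(x)\psi(t)$ with $w\in V$, $\psi \in C^1[0,T]$, $\psi(T)=0$, integrating by parts in time, and passing to the limit using the fact that the evaluation $Y \to H$ at $t=0$ is weakly continuous so that $q_n(0) \rightharpoonup q(0)$ in $H$.

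The main obstacle is (b), where both factors in $u_n \nabla q_n$ only converge weakly. The decisive ingredient is the compact embedding
\begin{equation*}
    \mathcal{U} \hookrightarrow\hookrightarrow L^\infty(0,T;L^\infty(\Omega_c))
\end{equation*}
valid in spatial dimensions $2$ and $3$, obtained by combining the Rellich-type embedding $H^2(\Omega_c) \hookrightarrow\hookrightarrow L^\infty(\Omega_c)$ with the Aubin-Lions lemma applied to the defining norm of $\mathcal{U}$. This upgrades $u_n \overset{\star}{\rightharpoonup} u$ to strong convergence in $L^\infty(0,T;L^\infty(\Omega_c))$. Writing $u_n \nabla q_n - u\nabla q = (u_n-u)\nabla q_n + u(\nabla q_n - \nabla q)$, testing against $\nabla\phi$ with $\phi\in L^2(0,T;V)$, and integrating over $Q_c^T$, the first term is bounded by $\|u_n-u\|_{L^\infty(L^\infty)}\|\nabla q_n\|_{L^2(L^2)}\|\nabla\phi\|_{L^2(L^2)}$ and vanishes because the two gradient factors stay bounded while the $L^\infty$ factor tends to zero; the second vanishes because $u\nabla\phi \in L^2(0,T;L^2(\Omega))$ and $\nabla q_n \rightharpoonup \nabla q$ in that space. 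Collecting (a)--(c) yields $G(q,u)=0$, and arbitrariness of the subsequence completes the argument.
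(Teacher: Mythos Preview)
Your proposal is correct and follows essentially the same route as the paper's proof: the same invocation of Theorem 9.4 of \cite{manzonioptimal}, the same three-step identification of the weak limits ($Aq=\Xi$, $B(u,q)=\Phi$, and the time-derivative term), the same splitting $u_n\nabla q_n - u\nabla q = (u_n-u)\nabla q_n + u(\nabla q_n-\nabla q)$, the same key compactness ingredient $\mathcal{U}\hookrightarrow\hookrightarrow L^\infty(0,T;L^\infty(\Omega_c))$ via Aubin--Lions, and the same integration by parts against factorized test functions $w\psi$ with $\psi(T)=0$ to handle $\dot q_n$. Your write-up is in fact somewhat more careful in labeling the closure hypothesis as (iii) and in spelling out why the weak continuity of $Y\to H$ at $t=0$ is needed.
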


Due to the nature of $\mathcal{U}$, $Y\times \mathcal{U}$ is not a reflexive space, however, being the dual of the separable space $H^1(0,T;V^\star,V)\times \Upsilon$, the theorem can still be applied, where the space $\Upsilon$ is defined as
\begin{equation*}
    \Upsilon= \{f\in L^{1}(0,T;H^2(\Omega_{c})^{\star}): \frac{\partial{f}}{\partial{t}}\in L^2(0,T;H^2(\Omega_{c})^{\star\star})\}.
\end{equation*}

\section{Discretization and Numerical experiments}
\label{NUM}

In this Section, after discretizing both in space and in time the OCP, we present the main numerical results. We address both the steady state and the transient regime, considering different layouts of the obstacle and of the cloak.

\subsection{Numerical Discretization of the OCP}

For the numerical approximation of the reference problem and the OCPs, we employ the finite element method. We introduce a triangulation of the domain $\Omega_{un}$ and select piecewise linear, globally continuous ansatz functions $\phi_i$ ($\mathbb{P}_1$ finite elements) for the space approximation of state and adjoint variables in $\Omega$, while the basis is truncated to $\Omega_c$ for the control approximation. 

To generate a suitable triangulation on $\Omega$, we restrict the previous mesh to those elements belonging to $\Omega_{un} \setminus \Theta$, so that the nodes that belong to the obstacle are masked and a Dirichlet boundary is introduced instead. An illustrative example of the computational mesh and the resulting domain definition is shown in Figure \ref{control_layout}. We define by $E\in \R^{N_q \times N_z}$ the matrix that restricts the reference nodes to the OCP mesh nodes, where $N_q$ and $N_z$ are the dimension of the state and reference variables, respectively. In this way, a matrix $A\in \R^{N_z \times N_z}$ assembled on the reference mesh can be restricted to the OCP mesh through the projection $\tilde{A} = E A E^{\top}$.
In the following, we indicate with a tilde the matrices restricted to the computational domain of the control problem, e.g, $\tilde{A} = EAE^{\top}$.
The FEM approximation of the reference dynamics is therefore
\begin{equation} \label{reference_FEM}
    \begin{cases}
    M \dot{\bb{z}} + A\bb{z} = \bb{F}  \, , \, t \in (0,T) \\
    \bb{z}(0)                = \bb{0}  \, , \\ 
    \end{cases}
    \tag{\ref{reference_state} }
\end{equation}
where $M_{ij} = \int_{\Omega} \phi_i \phi_j d\Omega$, $A_{ij} = \int_{\Omega} \mu \nabla \phi_i \cdot \nabla \phi_j d\Omega$ and $F_i = \int_{\Omega} s \phi_i d\Omega$ for $i,j = 1\, \ldots, N_z$. Regarding the OCP discretization, we follow an Optimize-then-Discretize (OtD) strategy (see e.g. \cite{herzog}) that consists of discretizing the optimality conditions obtained at the continuous level.
We discretize the state equation:
\begin{equation*} \label{state equation}
    \begin{cases}
    EME^{T}\dot{\textbf{q}}(t)+ \{E[A + A_{r}]E^{T} + \bar{\bar{B}}_{u}\textbf{u}(t)+\bar{\bar{B}}_{f}\textbf{f}(t)+\bar{\bar{B}}_{v}\textbf{v}(t) \}\textbf{q}(t) = E\textbf{F} + \textbf{F}_{o} \quad \text{$t\in(0,T)$}\\
    \textbf{q}(0) = \textbf{0} 
    \end{cases}\,
\end{equation*}
introducing the Robin matrix  $(A_{r})_{i,j}=\int_{\Gamma_{d}} -\varphi_{j}\varphi_{i} \,d\Gamma$, the vector $\bb{F}_{o}$ including the contribution arising from the Dirichlet data, and a rank 3 tensor for each control variable:
\begin{equation*}
\begin{cases}
    (\bar{\bar{B}}_{u})_{i,j,k} = \int_{\Omega_{c}} (\varphi_{k}U)\nabla{\varphi_{i}} \cdot \nabla{\varphi_{j}} \,d\Omega \\
    (\bar{\bar{B}}_{f})_{i,j,k} = \int_{\Omega_{c}} (\varphi_{k}L)\nabla{\varphi_{i}} \cdot \nabla{\varphi_{j}} \,d\Omega \\ 
    (\bar{\bar{B}}_{v})_{i,j,k} = \int_{\Omega_{c}} (\varphi_{k}S)\nabla{\varphi_{i}} \cdot \nabla{\varphi_{j}} \,d\Omega; 
\end{cases}
\end{equation*}
\noindent each tensor vector product gives us a matrix of the type:
\begin{equation*}
    (\bar{\bar{B}}_{u}\textbf{u}(t))_{i,j} = \sum_{k=1}^{N_{u}} u_{k}(t)\int_{\Omega_{c}} \varphi_{k} \nabla{\varphi_{i}} \cdot \nabla{\varphi_{j}} \,d\Omega = \sum_{k=1}^{N_{u}} (\bar{\bar{B}}_{u})_{i,j,k}u_{k}(t).
\end{equation*}

The time discretization is obtained by means of the Backward Euler method, however, we underline that the approximation scheme can be easily converted into a more generic $\theta-method$.

The discretized OCP yields the following Nonlinear Programming Problem (NLP): 
\begin{equation*}
    min \quad \tilde{J}( \textbf{q}_{0},...,\textbf{q}_{N},\textbf{u}_{0},...,\textbf{u}_{N},\tilde{\textbf{f}}_{0},...,\tilde{\textbf{f}}_{N},\textbf{v}_{0},...,\textbf{v}_{N})
\end{equation*}
\begin{equation} \label{NLP_formulation_state_equation}
\hspace*{-0.2cm}
   \textrm{subject to} 
    \begin{cases}
    \tilde{A}_{+}(\textbf{u,f,v}_{i+1})\textbf{q}_{i+1} =  \tilde{A}_{-}(\textbf{u,f,v}_{i})\textbf{q}_{i} + E\textbf{F}  + \textbf{F}_{o,i+1}, \  i=0,\ldots,N-1\\
    \textbf{q}_{0} = \textbf{0} \\
    2\mu + (\textbf{u}_{i})_{j} + (\textbf{f}_{i})_{j} \geq \epsilon > 0,\ i=0,...,N, j=1,...,N_{u} \\ 
    (\mu + (\textbf{u}_{i})_{j})(\mu + (\textbf{f}_{i})_{j}) - (\textbf{v}_{i})_{j} ^ 2 \geq \epsilon > 0, \  i=0,\ldots,N, j=1,\ldots,N_{u}.
    \end{cases}
\end{equation}
In particular: the last two equations represent the constraints that have to be satisfied at each time instant to keep the diffusivity matrix \eqref{diffusivity_matrix} positive definite; $\epsilon>0$ is a tolerance imposed on the constraints; and the transition matrices $\tilde{A}_{+}$, $\tilde{A}_{-}$ are derived similarly to what is done in a similar context in  \cite{9787068} by Sinigaglia et al.
The adjoint equation and the expression of the gradient of the cost functional are derived using the discrete adjoint method \cite{discretelagrangian} so that, by introducing the adjoint variables  $\textbf{p}_{i}$, $i=0,\ldots,N$, we obtain the following \textit{discrete Lagrangian}:
\begin{equation} \label{discrete_lagrangian}
    \tilde{\mathcal{L}} = \tilde{J} - \Delta{t}\{ \sum_{i=0}^{N-1} \langle\tilde{A}_{+}\textbf{q}_{i+1} - (\tilde{A}_{-}\textbf{q}_{i} +E\textbf{F} + \textbf{F}_{o,i+1}), \textbf{p}_{i+1} \rangle  \}
\end{equation}
The gradient of the cost functional with respect to the control variables is obtained differentiating \eqref{discrete_lagrangian} with respect to each of the control variables $\textbf{u}_{i},\:\textbf{f}_{i},\:\textbf{v}_{i}$. In particular, the derivatives of \eqref{discrete_lagrangian} with respect to $\textbf{q}_{i}$, $i=0,\ldots,N$, give us the (backward in time) adjoint equation; see \cite[Chapter 8]{manzonioptimal} for further details on the way the cost functional can be approximated.

 Since  we also provide numerical results concerning the steady-state regime, we report its numerical discretization below. 
The reference field, before given by \eqref{reference_FEM} is obtained by solving:
\begin{equation}
\label{reference_FEM_steady_stae}
     A\bb{z} = \bb{F}. 
\end{equation}
While the approximation of the optimality system made by the state equation \eqref{ocp_formulation_pde}, the adjoint equation \eqref{adjoint_problem} and the Euler equations \eqref{euler_equation} becomes:

\begin{equation} \label{equ:15122}
    \begin{cases}
     \{E[A + A_{r}]E^{T} + \bar{\bar{B}}_{u}\textbf{u}+\bar{\bar{B}}_{f}\textbf{f}+\bar{\bar{B}}_{v}\textbf{v} \}\textbf{q} = E\textbf{F} + \textbf{F}_{o} + \textbf{F}_{ocp}\\ 
       \{E[A + A_{r}]E^{T} + \bar{\bar{B}}_{u}\textbf{u}+\bar{\bar{B}}_{f}\textbf{f}+\bar{\bar{B}}_{v}\textbf{v} \}^T\textbf{p} = M_{obs}(\textbf{q}-E\textbf{z})\\ 
        \beta M_{u}\textbf{u} + \beta_{g} A_{u}\textbf{u} - (\bar{\bar{B}}_{u}\textbf{p})\textbf{q} = \textbf{0} \\ 
        \xi M_{u}\textbf{f} + \xi_{g} A_{u}\textbf{f} - (\bar{\bar{B}}_{f}\textbf{p})\textbf{q} = \textbf{0} \\ 
        \gamma M_{u}\textbf{v} + \gamma_{g} A_{u}\textbf{v} - (\bar{\bar{B}}_{v}\textbf{p})\textbf{q} = \textbf{0}. 
    \end{cases}
\end{equation}
In the steady-state case, the constraints in \eqref{NLP_formulation_state_equation} become:
\begin{equation*}
\begin{cases}
    2\mu + \textbf{u}_{j} + \textbf{f}_{j} \geq \epsilon > 0 \quad j=1,\ldots,N_{u}\\
    (\mu + \textbf{u}_{j})(\mu + \textbf{f}_{j}) - (\textbf{v}_{j})^2 \geq \epsilon > 0 \quad j=1,\ldots,N_{u}. \\
    
\end{cases}
\end{equation*}

\subsection{Simulation results, steady-state case}
\label{Steady_state_numerical_simulation}
We start considering the cloaking problem under stationary condition \eqref{SS_ocp_functional}--\eqref{SS_pde_formulation}. 
The steady-state OCP is solved using the ${MATLAB}$\textsuperscript{\textregistered} function \texttt{fmincon}, once all the FEM matrices have been built through the \texttt{RedbKit} library \cite{redbKIT}. The maximum length of the sides of the triangles in the triangulation is $h_{max}=0.1232$ for a number of elements equal to 2320. 
The \texttt{Tensor} Toolbox (Bader and Kolda (2006)) \cite{10.1145/1186785.1186794,doi:10.1137/060676489} is used to perform efficient tensor computations. In the functional \eqref{SS_ocp_functional} we set as parameters:
$\beta=10^{-9}$, $\beta_{g}=7\times 10^{-6}$, $\xi=10^{-9}$, $\xi_{g}=7\times 10^{-6}$, $\gamma=10^{-9}$ and $\gamma_{g}=5\times10^{-5}$, in the PDE \eqref{SS_pde_formulation} we pose: $T_{0}=0^\circ C$, $\mu = 1$,  $\mu_{min}=0.1$, 
 $\alpha=1$, $s =100$ and the tolerance on the two constraints is of $\epsilon = 10^{-3}$. 
Figure \ref{ss_pb1} shows that the cloaked and reference temperature field are indistinguishable at the right of the obstacle, while the left side is more difficult to cloak and is possible to remark some difference between the two. The three controls are reported in Figure \ref{ss_pb1}. By these plots, we can infer that the control variables play a different role in the optimization, and that each of them is relevant to achieve proper cloaking.

\begin{figure}[t!]
\centerline{
\includegraphics[width=0.5\textwidth]{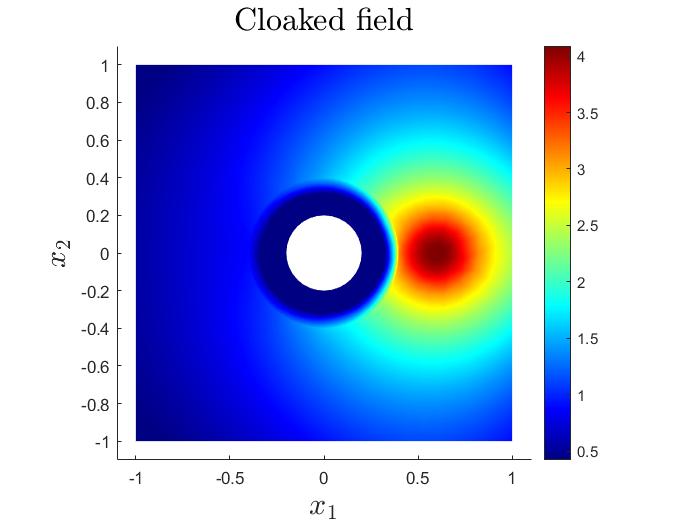}
\includegraphics[width=0.5\textwidth]{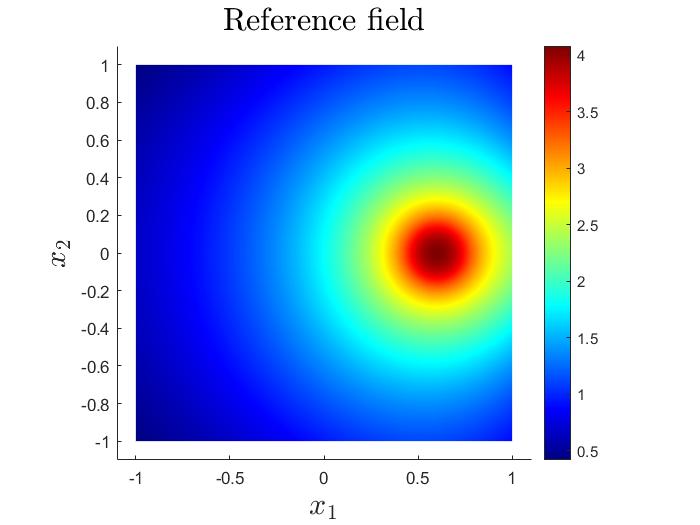}}
\centerline{
\includegraphics[width=0.365\textwidth]{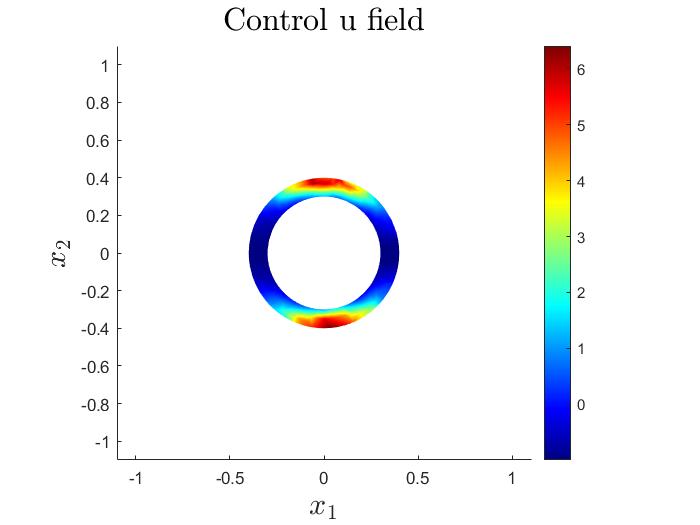}
\includegraphics[width=0.365\textwidth]{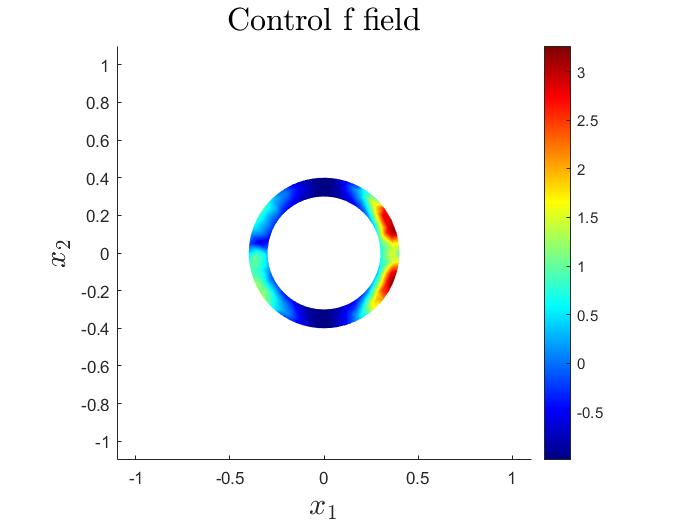} \includegraphics[width=0.365\textwidth]{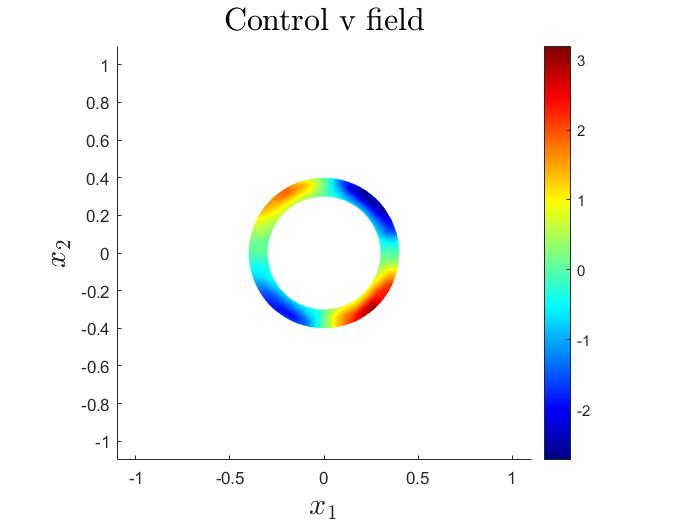}}

\caption{Steady-state, passive thermal cloaking: cloaked and reference fields (first row), optimal controls $u$, $f$, and $v$ (second row).}

\label{ss_pb1}
\end{figure}

\begin{figure}[t!]

\centerline{
\includegraphics[width=0.5\textwidth]{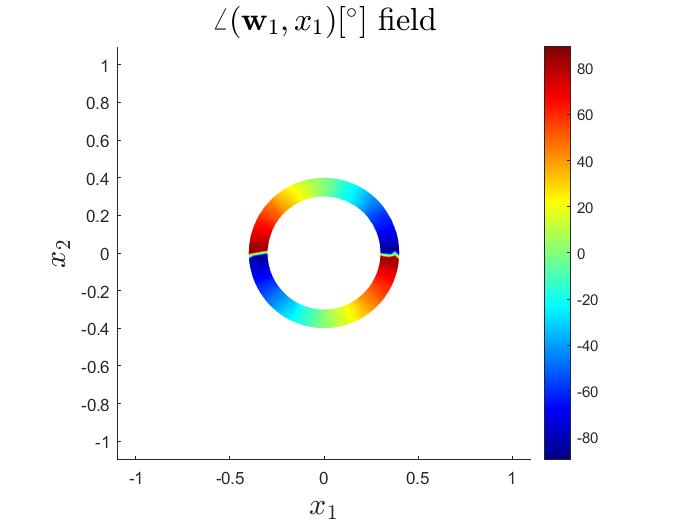}
\includegraphics[width=0.5\textwidth]{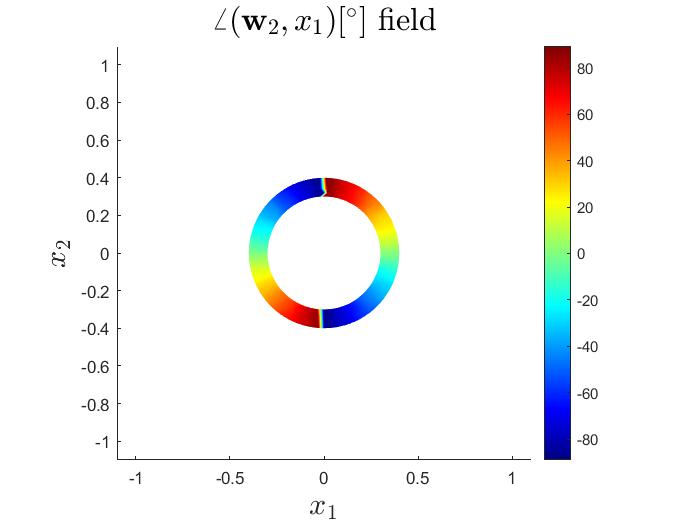}}

\caption{Steady-state, passive thermal cloaking: angle (in degrees) between each of the two eigenvectors and the horizontal axis.}
\label{angle_eigenvetors}

\end{figure}

Figure \ref{angle_eigenvetors} reports the inclination angle of the eigenvectors $\textbf{w}_{1}$ and $\textbf{w}_{2}$ of the diffusivity matrix $K$ \eqref{diffusivity_matrix}, representing the direction of maximum and minimum diffusivity, respectively. Since $K$ is symmetric, eigenvectors are orthogonal. 
Analogously, Figure \ref{eigenvalues} shows the eigenvalues $\lambda_{1}$ and $\lambda_{2}$: $\lambda_{1}$ is maximum where the flux lines are mostly affected by the cloak. We can remark that the control terms are able to steer the first eigenvector, that is the one associated with the maximum eigenvalue, changing the diffusivity inside the cloak in such a way that the heat flux is deviated to circumvent the obstacle. The need of employing three control terms in the diffusivity matrix stems from the fact that this structure allows the proper rotation of the eigenvectors of \eqref{diffusivity_matrix}.

\begin{figure}[t!]
\centerline{
\includegraphics[width=0.5\textwidth]{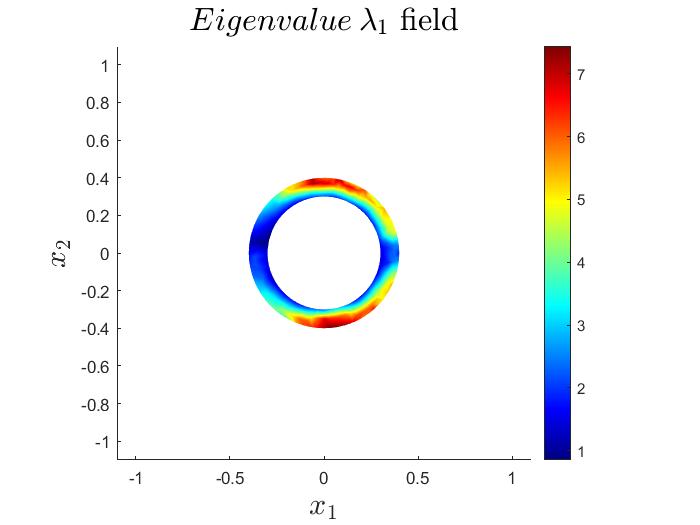}
\includegraphics[width=0.5\textwidth]{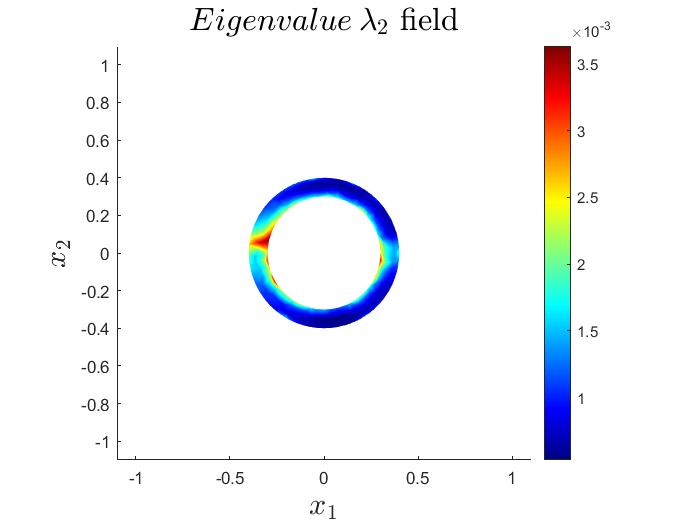}}

\caption{Steady-state, passive thermal cloaking: magnitude of the eigenvalues $\lambda_{1}$ (left) and $\lambda_{2}$ (right).} 

\label{eigenvalues}
\end{figure}

We now move towards the modification of the domain inside which the control is located. 
We tackle the cloaking of the silhouette of the half-woolen boar. The control terms are put in a thin offset of the target. This configuration still allows good cloaking performances since, despite the complexity of the object to hide, the controlled domain is connected, and this permits deviating the eigenvectors as desired. The results achieved can be seen in Figure \ref{passive_boar_cloak}. 

\begin{figure}[h!]
\centerline{
\includegraphics[width=0.365\textwidth]{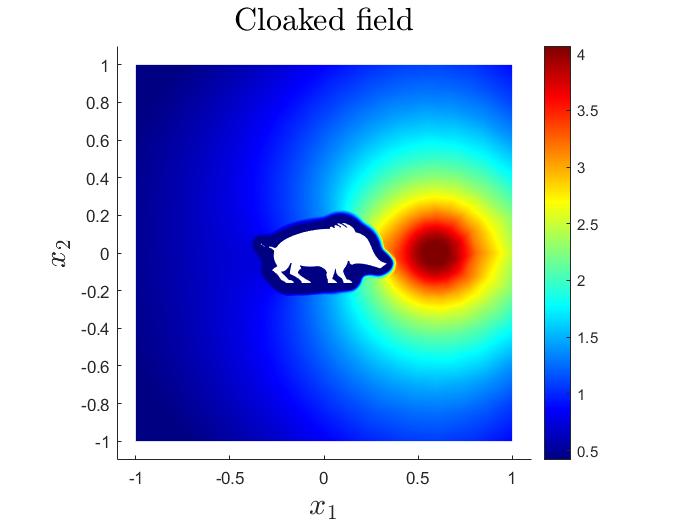}
\includegraphics[width=0.365\textwidth]{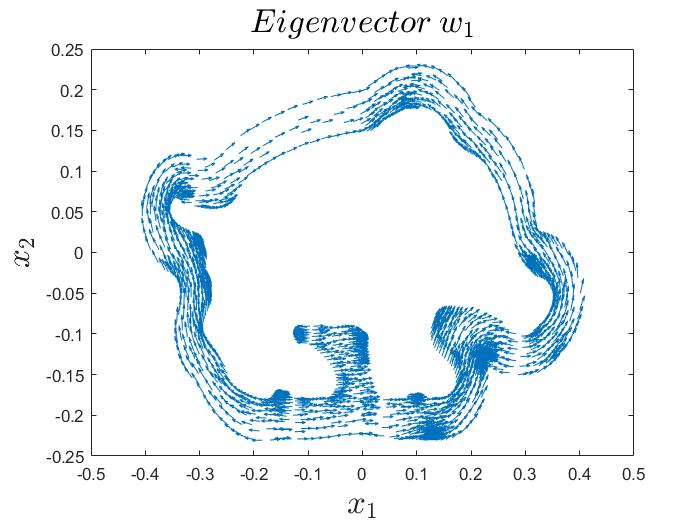}
\includegraphics[width=0.365\textwidth]{images/steady_state_fmincon/reference_stationary_field.jpg}}
\centerline{
\includegraphics[width=0.365\textwidth]{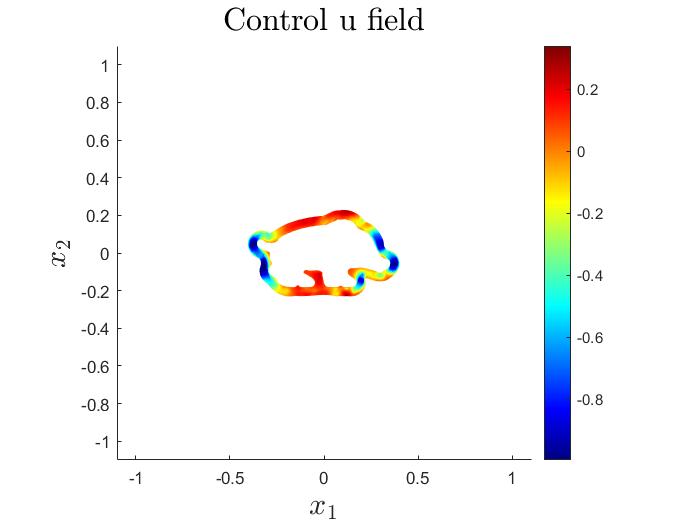}
\includegraphics[width=0.365\textwidth]{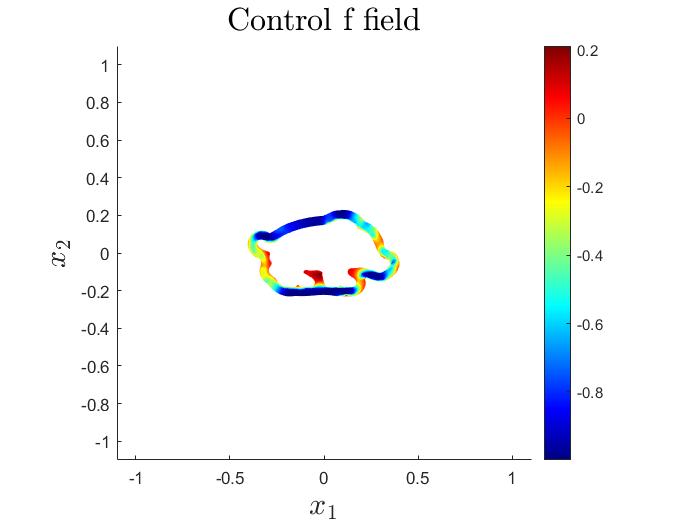}
\includegraphics[width=0.365\textwidth]{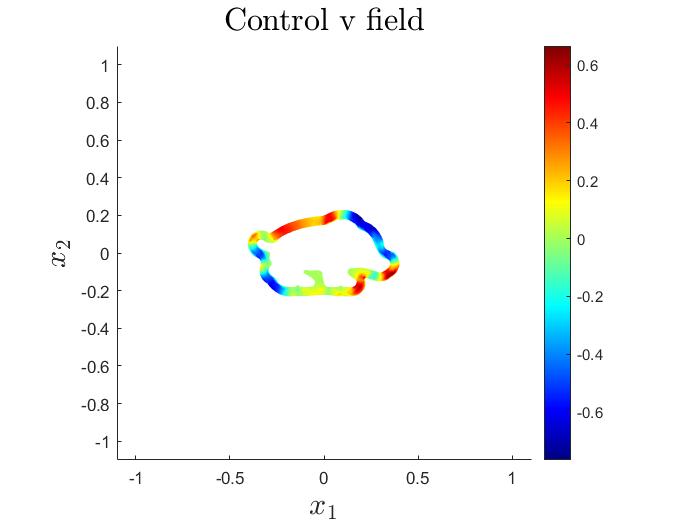}}

\caption{Steady-state, passive thermal cloaking: the case of a target object with arbitrarily complex shape. Here control variables are in a thin offset of the target.}
\label{passive_boar_cloak}

\end{figure}

Introducing the Mean Tracking Error (MTE) as:
\begin{equation*}
    \text{MTE} = \frac{\int_{\Omega_{obs}} |q^{ss}(x)-z^{ss}(x)|^2 d\Omega }{\int_{\Omega_{obs}} d\Omega },
\end{equation*}
we define the cloaking efficiency $\eta$ as in \textit{\cite{Sinigaglia_2022}}: 
\begin{equation*}
    \eta = \frac{|\text{MTE} - \text{MTE}^{\star}|}{\text{MTE}},
\end{equation*}
where MTE$^{\star}$ indicates the optimal variables while the absence of superscript denotes the uncontrolled case.
The resulting value of $\eta$ for each cloaking layout is shown in Figure \ref{tracking_errors_ss_passive}. Note that the region where the passive cloak struggles mostly is the one at the left of the obstacle.

\begin{figure}[h!]
\centerline{
\subfigure[$\eta=0.9$]{\includegraphics[width=0.5\textwidth]{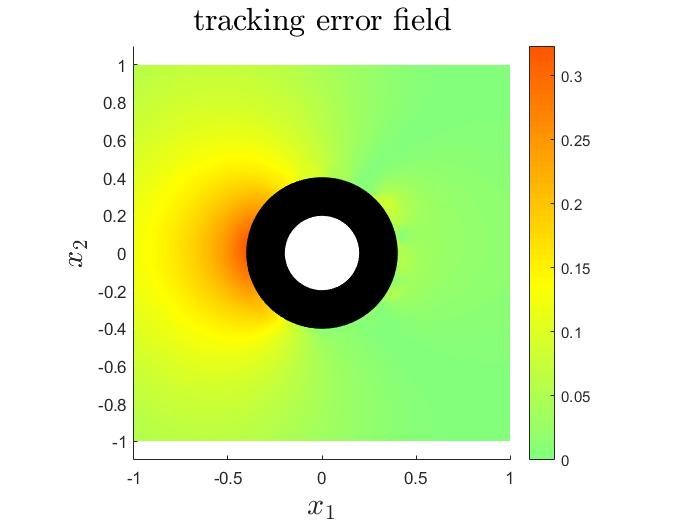}}
\subfigure[$\eta=0.8315$]{\includegraphics[width=0.5\textwidth]{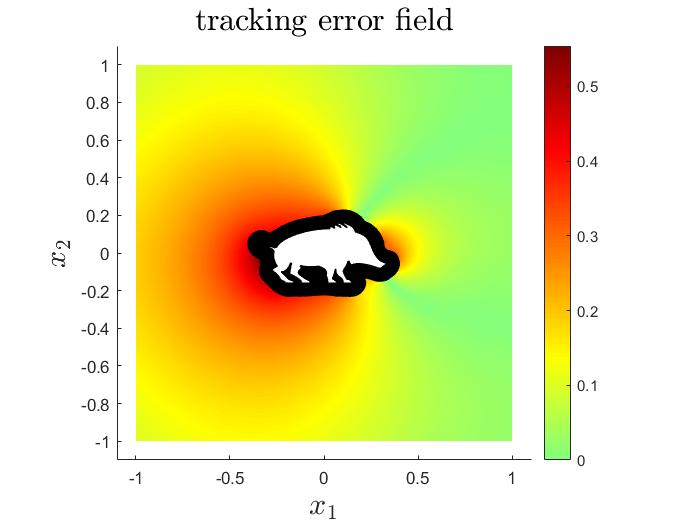}}
}
\caption{Steady-state, passive thermal cloaking: tracking error fields for the three considered layouts.}

\label{tracking_errors_ss_passive}
\end{figure}

Now we would like to characterize the behaviour of the cloak when used with a discretization that is finer than the one used for the optimization. Therefore, we simulate the cloaking problem on a finer grid than the one used for the above approximation.

We build a fine mesh dividing each triangle of the past triangulation into four triangles. More precisely, each side of each triangle of the coarse mesh has been halved.
The number of elements of the new mesh corresponds to 9280 and the maximum side length is $h_{max}=0.0616$. The number of control nodes goes from 255 (coarse grid) to 898 (fine grid), thus making the optimization process much more expensive. 
The coarse mesh, used for the optimization, and the refined domain in which the controls lie are shown in Figure \ref{coarse_fine_mesh}.
\begin{figure}[t!]
\centerline{
\includegraphics[width=0.5\textwidth]{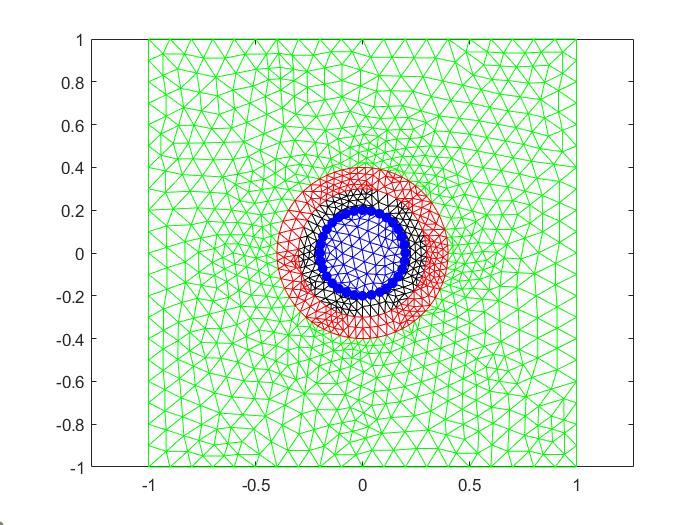}
\includegraphics[width=0.5\textwidth]{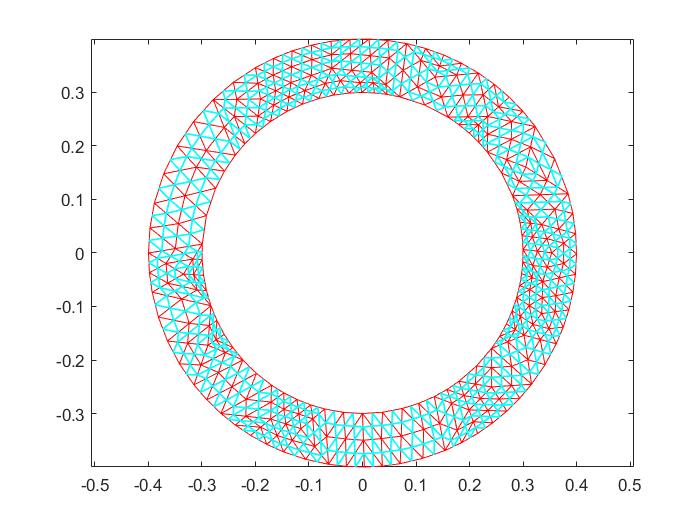}}

\caption{Left: coarse mesh that used to solve the OCP. Right: fine mesh over the domain $\Omega_{c}$. The vertices of the triangles shared with the coarse mesh are shown in red, while the newly added vertices are reported in cyan.} 
\label{coarse_fine_mesh}

\end{figure} The resolution of the optimal control problem from scratch has to be avoided, to do this, the optimal solution derived on the coarse mesh is interpolated in the fine mesh. The values assumed by the optimal control terms on the nodes of the coarse mesh are linearly extended to the nodes of the fine mesh that are not shared by the two meshes. This prolongation guarantees that the extended controls satisfy the nonlinear constraints. For the sake of comparison, the optimization problem in the finer mesh is solved from scratch. In Figure \ref{coarse_vs_fine_tracking_field} we report the tracking error fields in the two configurations, in the left picture the controls obtained from the optimization on the coarse mesh are applied, and in the right picture the controls have been recomputed on the fine grid.
\begin{figure}[t!]
\centerline{
\includegraphics[width=0.5\textwidth]{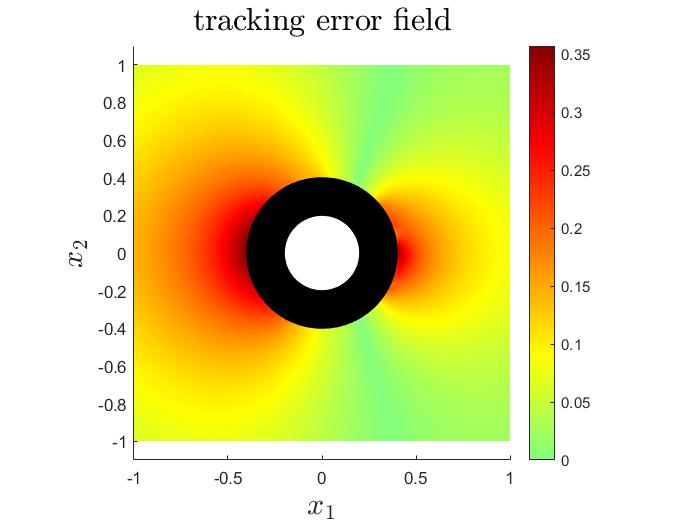}
\includegraphics[width=0.5\textwidth]{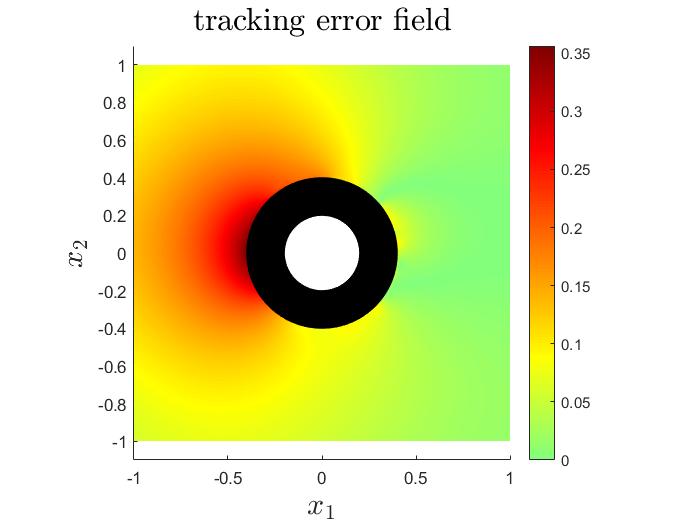}}
   
\caption{Steady-state, tracking error fields obtained with the optimal controls derived from the coarse mesh (left) and the optimal controls recomputed for the finer discretization (right).} 

\label{coarse_vs_fine_tracking_field}
\end{figure}
We can see that the two error fields are similar on the left portion of the obstacle.
On the opposite, the effects of the finer mesh are evident in the portion at the right of the obstacle. The reason for this difference is apparent comparing the controls utilized on this fine mesh. 
Therefore, in Figure \ref{controls_coarse_vs_fine} we report in the fine mesh the optimal controls derived on the coarse mesh (line 1) and the optimal controls derived resolving the optimization (line 2). The control terms present some differences, for example, the control term $f$ is more active in the optimal configuration and this explains the smaller tracking error at the right side of the obstacle. 
\begin{figure}[h!]
\centerline{
\includegraphics[width=0.365\textwidth]{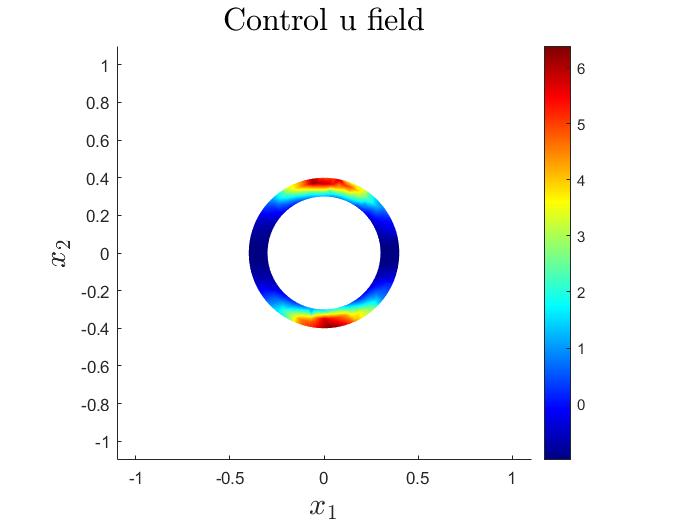}
\includegraphics[width=0.365\textwidth]{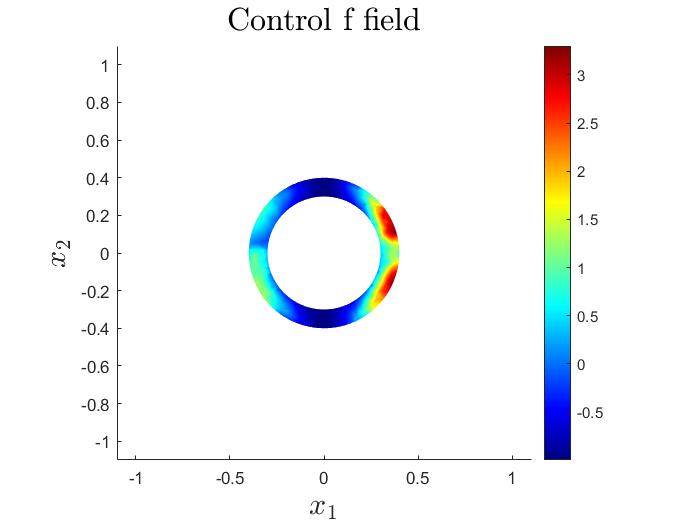}
\includegraphics[width=0.365\textwidth]{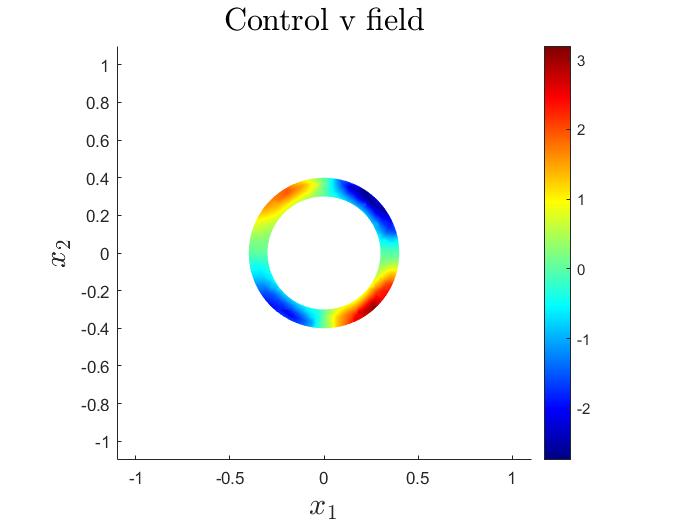}}
\centerline{
\includegraphics[width=0.365\textwidth]{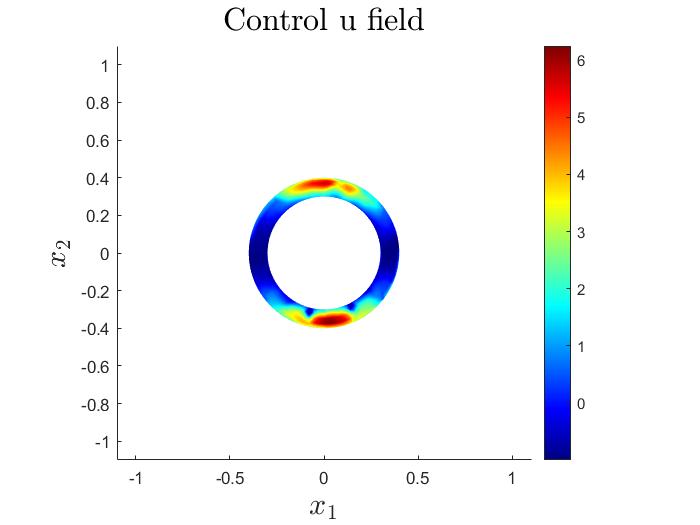}
\includegraphics[width=0.365\textwidth]{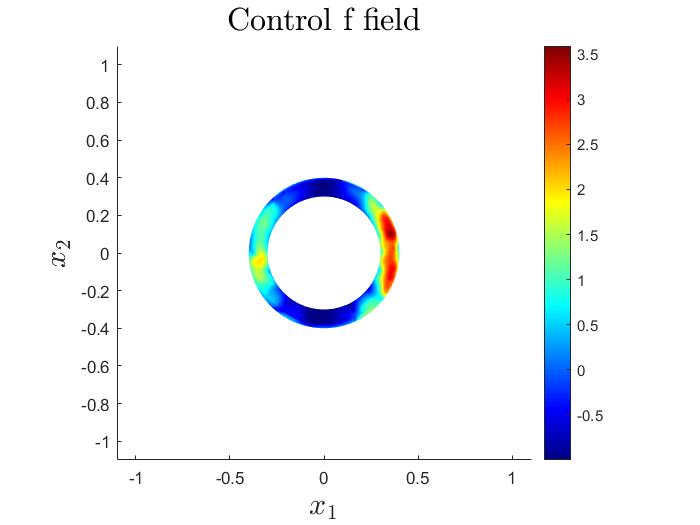}
\includegraphics[width=0.365\textwidth]{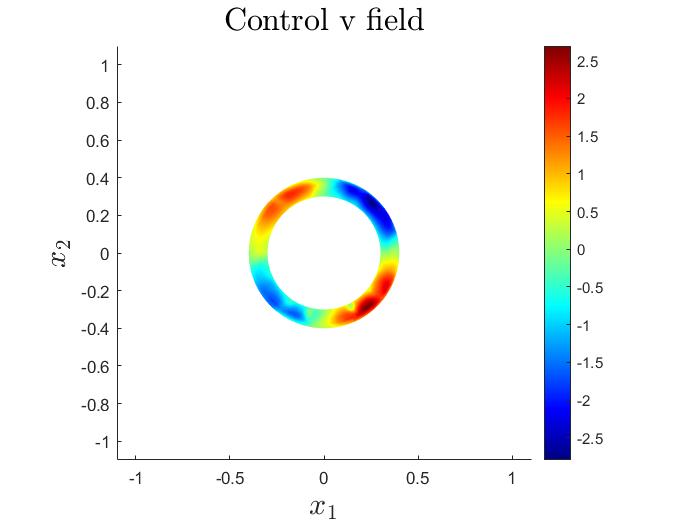}}
\caption{Steady-state, passive thermal cloaking, the controls derived on the coarse mesh and extended on the fine mesh are plotted in the first row, and the optimal controls derived on the fine mesh are shown in the second row. }

\label{controls_coarse_vs_fine}
\end{figure}

The tracking efficiency $\eta$ is 0.8568 applying coarse controls, while is 0.8729 applying the optimal controls. 
Despite this gap, the approach of extending a coarse control onto a finer discretization is still applicable and provides acceptable results with several computational benefits. 

We would like to investigate the behaviour of the cloak in front of changes in the probing field. That is, will the cloak designed for a first probing source be efficient in off-design conditions?
Numerical simulations suggest that, even in such situations, the performance of cloak are satisfactory. 
For the sake of clarity, the setup presenting a probing source at the right of the obstacle and with a circular cloak will be denoted as reference setup for the probing source. 
We consider a forcing term concentrated at the bottom of the object to be hidden, while maintaining the same distance between the center of the object and the center of the source. The source intensity $s$ is not varied. Figure \ref{tracking_field_bottom} shows the tracking error obtained using the optimal cloak corresponding to the control terms of the reference case and the tracking error derived with the optimal control terms recomputed for this new layout. 
\begin{figure}[t!]

\centerline{
\includegraphics[width=0.5\textwidth]{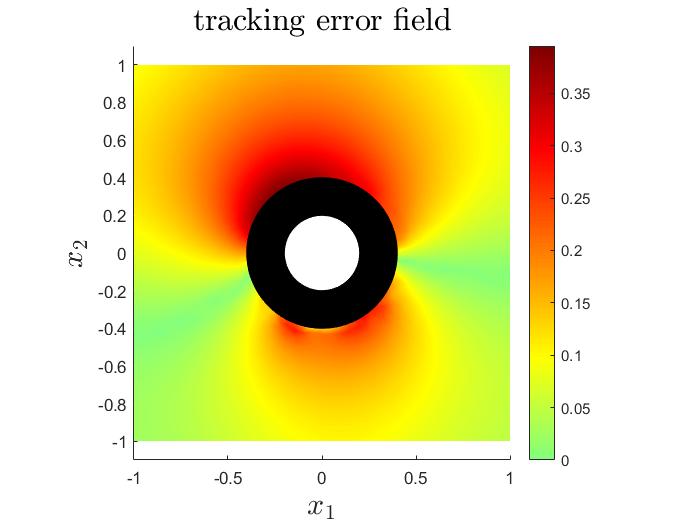}
\includegraphics[width=0.5\textwidth]{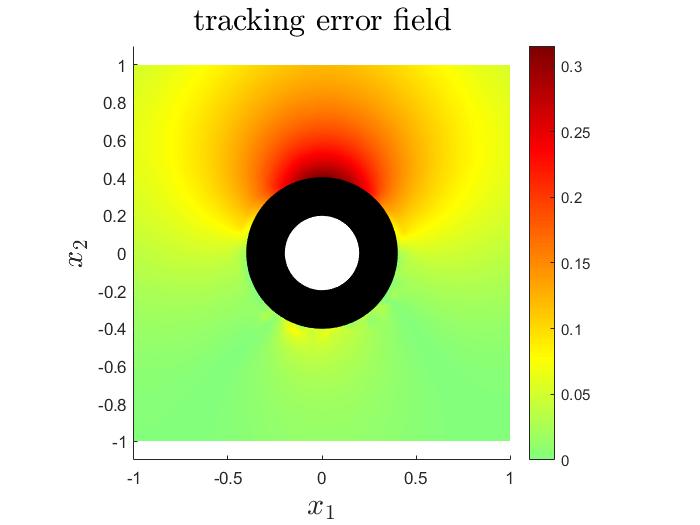}}
\caption{Steady-state, tracking error fields obtained with the optimal controls derived from the reference configuration (left) and the optimal controls recomputed for the problem at hand (right).} 

\label{tracking_field_bottom}
\end{figure} As is possible to see, the cloak is still efficient even in this new setting. In particular, the cloaking efficiency obtained with the controls of the reference configuration is $\eta=0.8143$, while the one obtained with the optimal controls for the problem at hand is $\eta=0.89$. In Figure \ref{controls_bottom}, we show the optimal controls derived from the new optimization problem. Comparing them with Figure \ref{ss_pb1}, is possible to observe that the control term $v$ is barely affected by the position of the source, on the other hand, the terms $u$ and $f$ are switching their role. Indeed, this reflects the part that $u$ and $f$ play in the diffusivity matrix $K$. 
\begin{figure}[t!]
\centerline{
\includegraphics[width=0.365\textwidth]{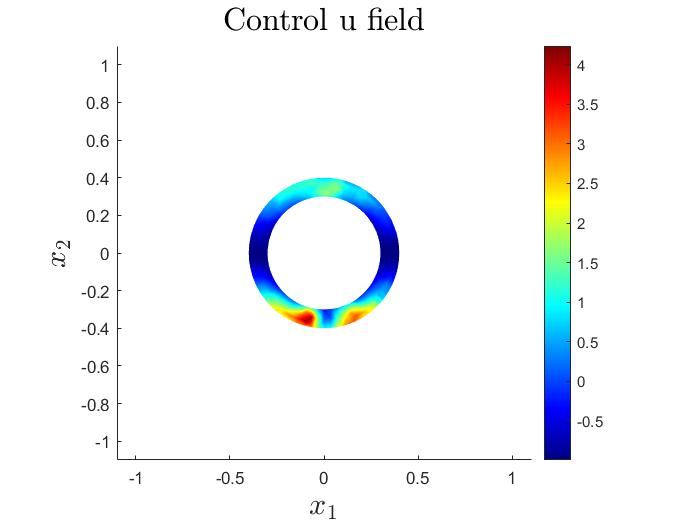}
\includegraphics[width=0.365\textwidth]{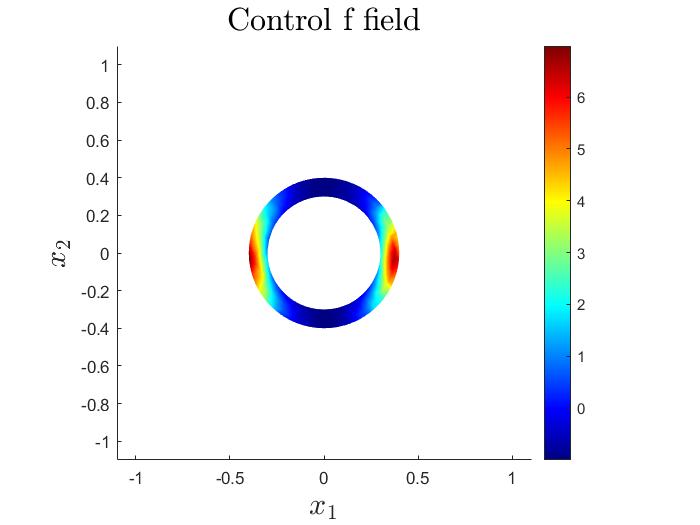}
\includegraphics[width=0.365\textwidth]{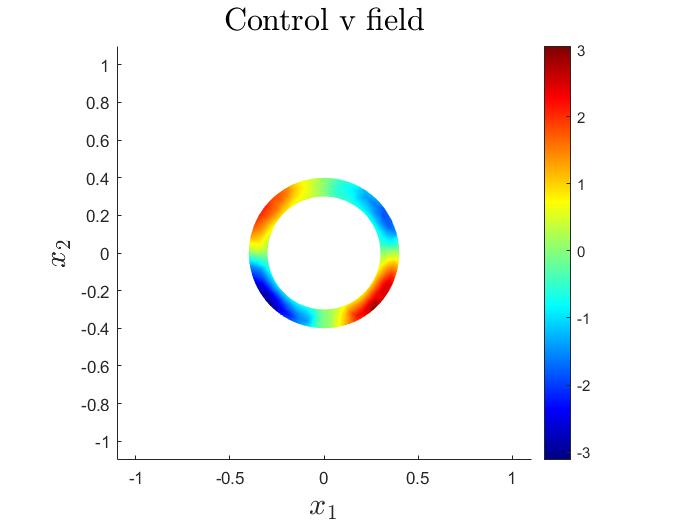}}

\caption{Steady-state, passive thermal cloaking, optimal controls $u$, $f$, $v$, with forcing term at the bottom of the obstacle.}

\label{controls_bottom}
\end{figure} The optimal angles of the diffusivity matrix are instead assuming the same pattern as in Figure \ref{angle_eigenvetors} and this is the reason why the optimal controls of the reference problem are still providing a sufficient cloaking effect. Ongoing numerical experiments seem to suggest that the reference setup remains reasonably efficient for other positions and magnitudes of the probing source, however this matter needs to be further investigated and will be part of future research.

\subsection{Simulation results, time-dependent case}

We can now move towards the approximation of passive thermal cloaking problems in the time-dependent case. 
 Considering a time-horizon $t_{f}=2$ and a time-discretization step $\Delta{t}=0.1429s$ ($N=14$), we end up with 3825 control variables. The main regularization parameters inserted into the cost functional are $\beta_{g}=\xi_{g}=\gamma_{g}=10^{-5}$. In the case of a distributed cloaking around a circular target object -- similarly to the first problem considered in the steady-state case -- a sequence of solutions at time instants $t=0.57(s)$, $t=1.29(s)$, $t=2.00(s)$ is shown in Figure \ref{state_time_dependent_solution}.  The control variables obtained at time instants $t=0.57(s)$, $t=1.29(s)$, $t=2.00(s)$ together with the first eigenvalue generated by them, are shown in Figure \ref{controls_time_dependent_case}, \ref{lambda_1_time_dependent_case}.

\begin{figure}[h!]
\centerline{
\includegraphics[width=0.365\textwidth]{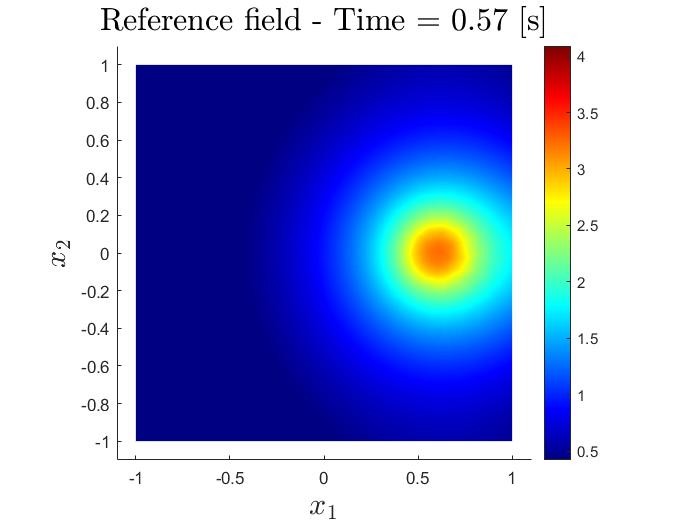}
\includegraphics[width=0.365\textwidth]{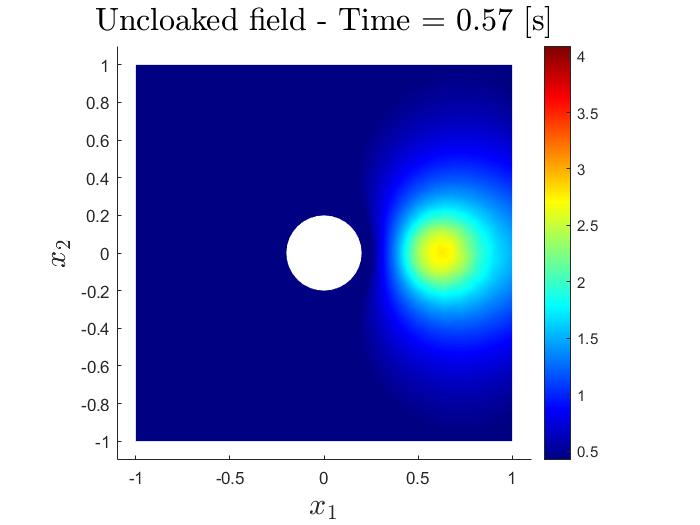}
\includegraphics[width=0.365\textwidth]{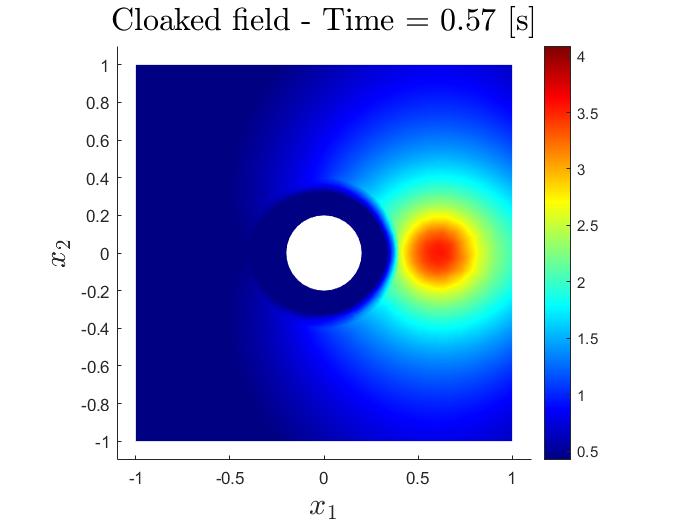}}
\centerline{
\includegraphics[width=0.365\textwidth]{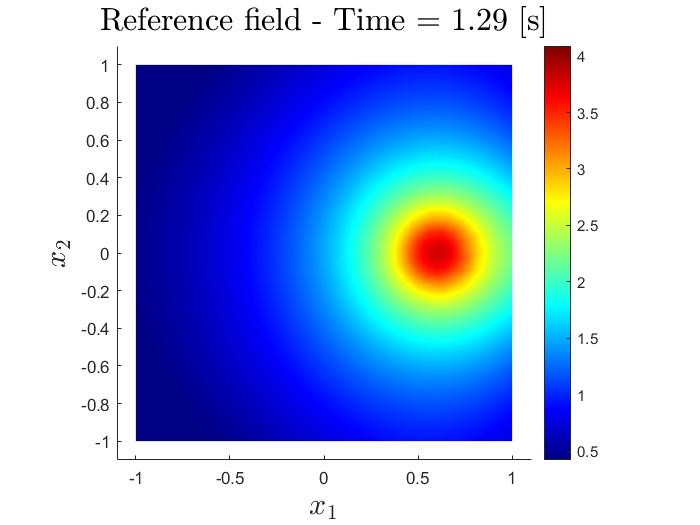}
\includegraphics[width=0.365\textwidth]{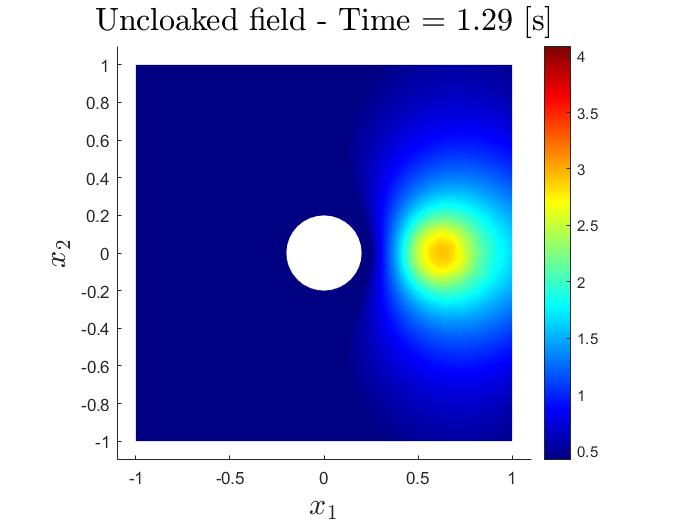}
\includegraphics[width=0.365\textwidth]{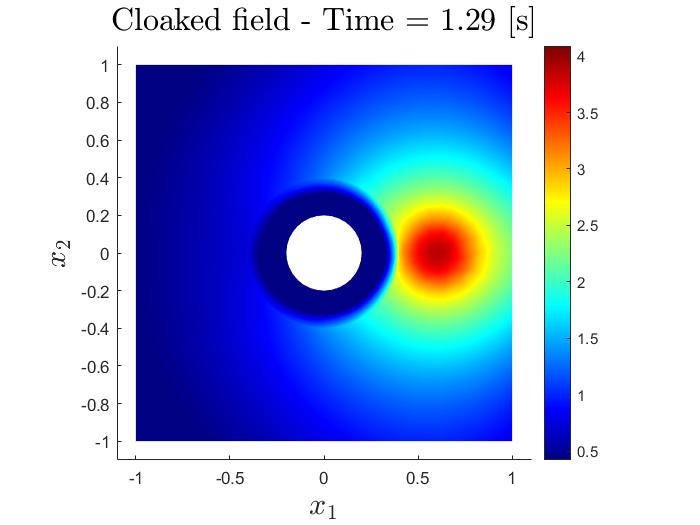}}
\centerline{
\includegraphics[width=0.365\textwidth]{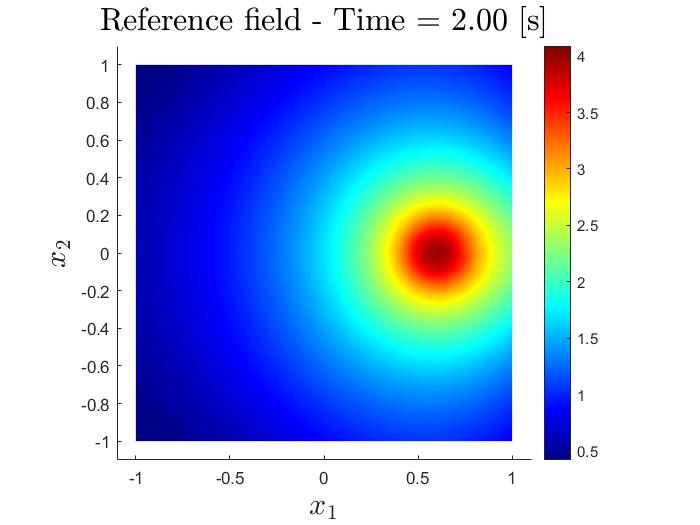}
\includegraphics[width=0.365\textwidth]{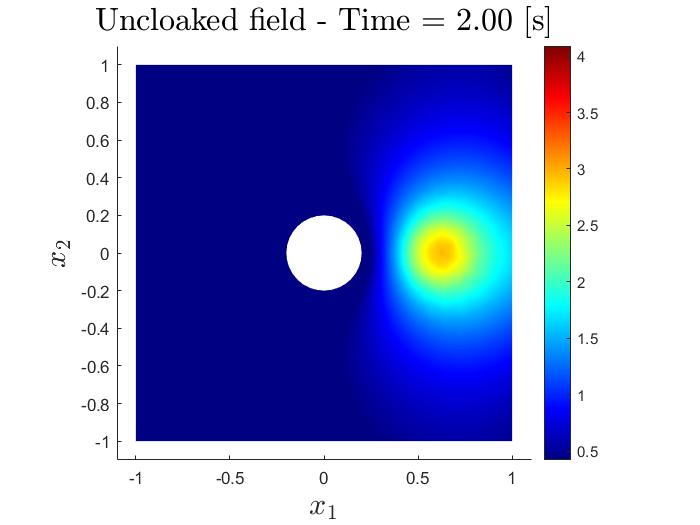}
\includegraphics[width=0.365\textwidth]{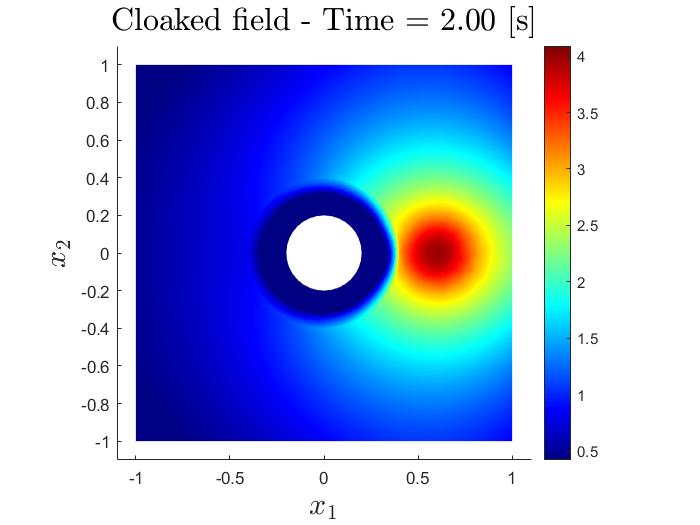}}
\caption{Time-dependent, passive thermal cloaking: reference, uncloaked, and cloaked fields (from left to right) at time instants $t=0.57(s)$, $t=1.29(s)$, $t=2.00(s)$ (from top to bottom).}
\label{state_time_dependent_solution}
\end{figure}

\begin{figure}[h!]
\centerline{
\includegraphics[width=0.365\textwidth]{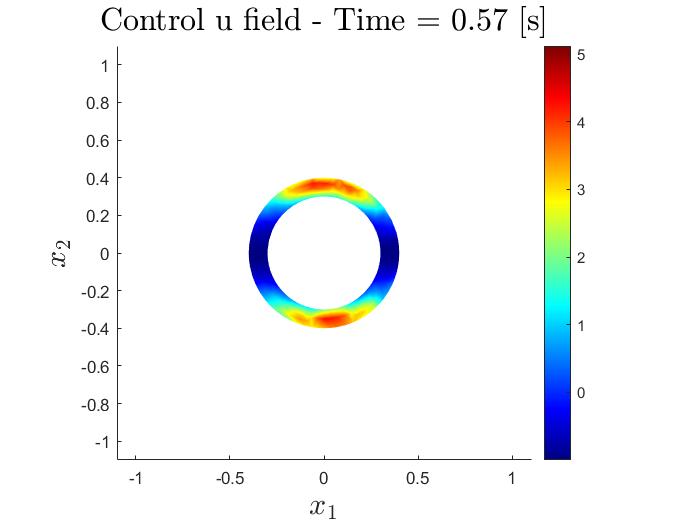}
\includegraphics[width=0.365\textwidth]{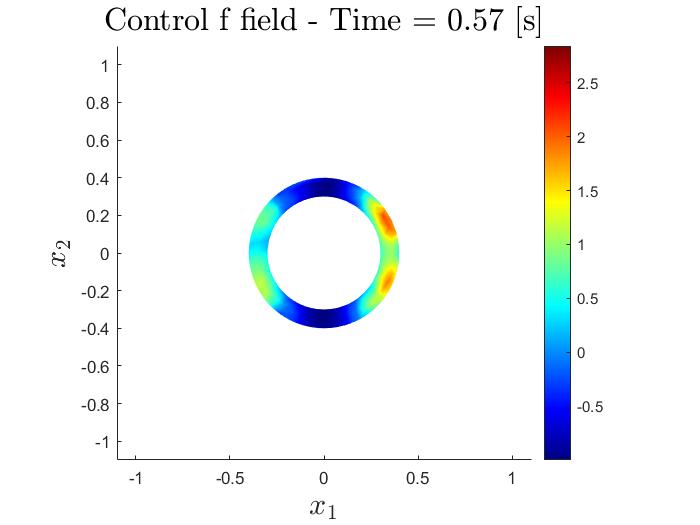}
\includegraphics[width=0.365\textwidth]{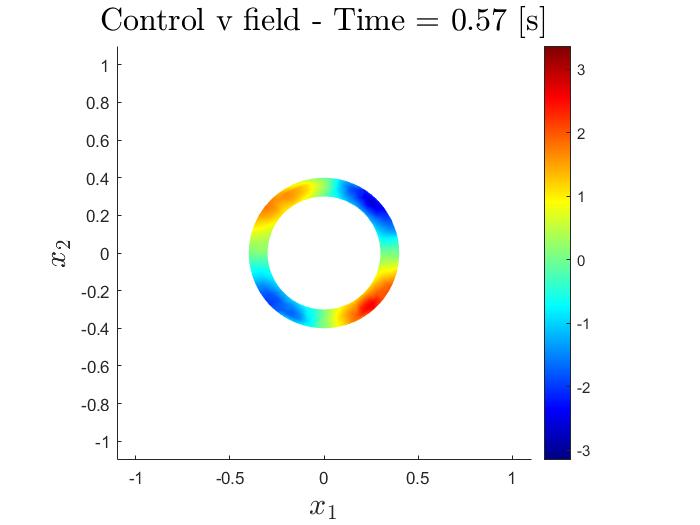}}
\centerline{
\includegraphics[width=0.365\textwidth]{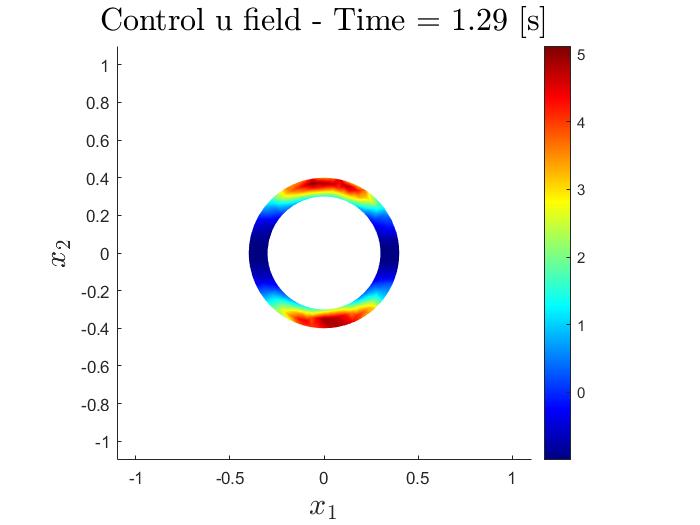}
\includegraphics[width=0.365\textwidth]{images/time_dependent_fmincon/control_f_0_57.jpg}
\includegraphics[width=0.365\textwidth]{images/time_dependent_fmincon/control_v_0_57.jpg}}
\centerline{
\includegraphics[width=0.365\textwidth]{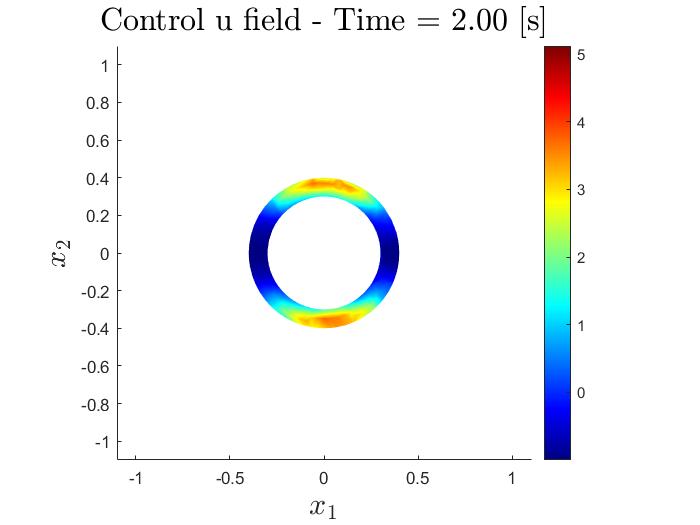}
\includegraphics[width=0.365\textwidth]{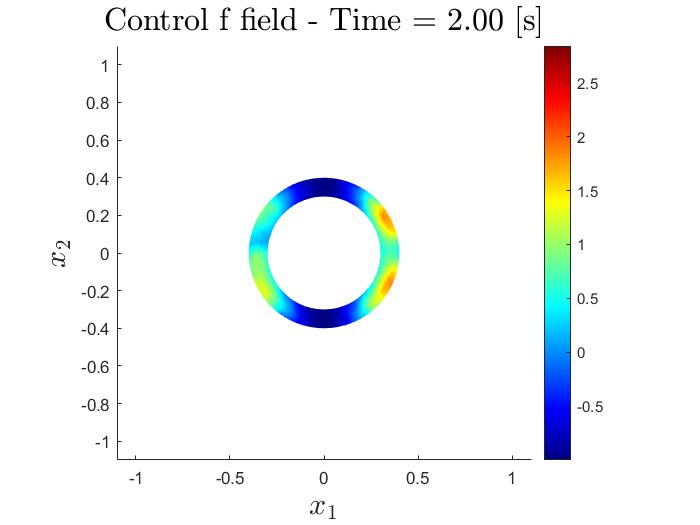}
\includegraphics[width=0.365\textwidth]{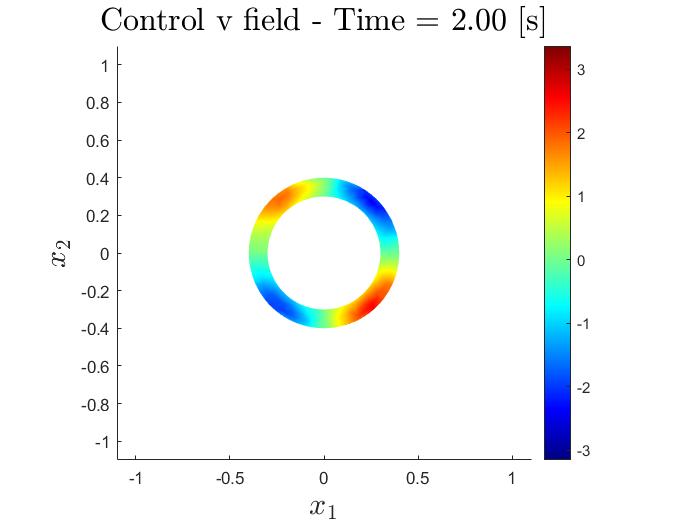}}
\caption{Time-dependent, passive thermal cloaking: 
control variables $u$, $f$, $v$ (lines 1--3) at time instants: $t=0.57(s)$, $t=1.29(s)$, $t=2.00(s)$ (from left to right). 
}
\label{controls_time_dependent_case}
\end{figure}

\begin{figure}[h!]
\centerline{
\includegraphics[width=0.365\textwidth]{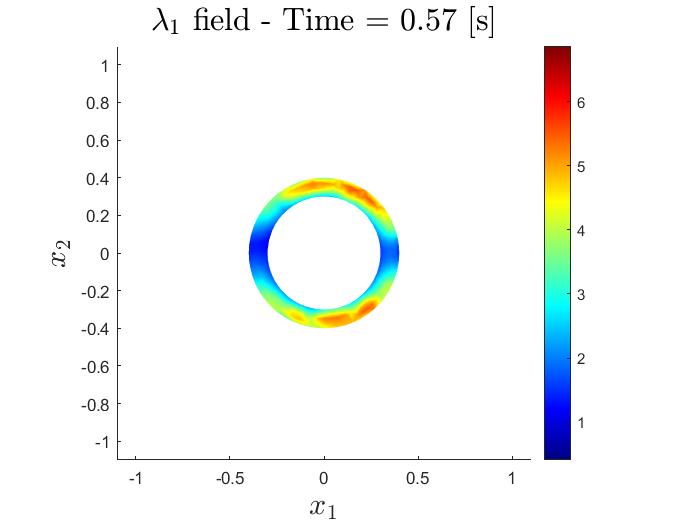}
\includegraphics[width=0.365\textwidth]{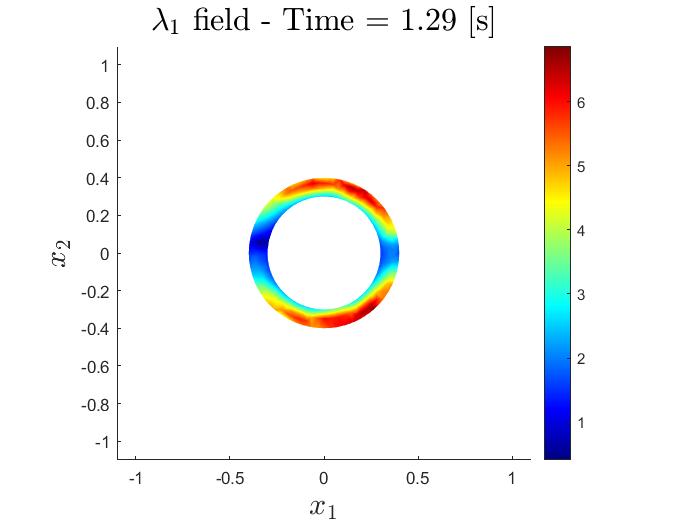}
\includegraphics[width=0.365\textwidth]{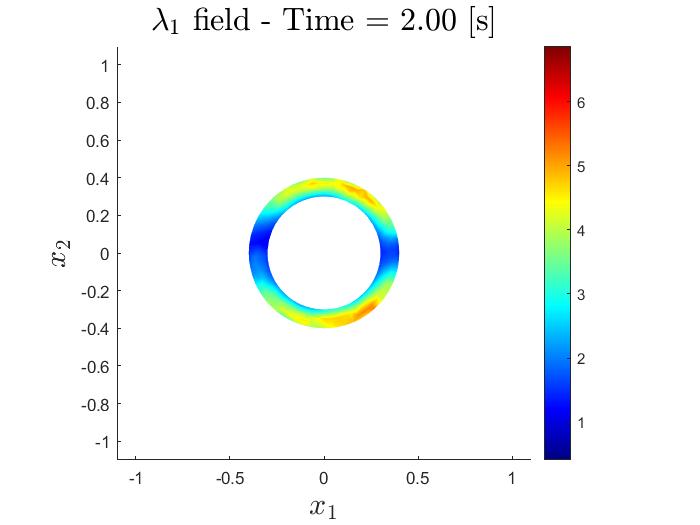}}

\caption{Time-dependent, passive thermal cloaking: 
eigenvalue $\lambda_1$ at time instants: $t=0.57(s)$, $t=1.29(s)$, $t=2.00(s)$ (from left to right). 
}
\label{lambda_1_time_dependent_case}
\end{figure}

We finally evaluate the norm: $\|q-z\|_{L^2(0,T,L^2(\Omega_{obs}))}$ numerically approximating the time integral with the trapezoidal rule. The optimal solution gives us an approximated value $\|q^{\star}-z\|_{L^2(0,T,L^2(\Omega_{obs}))}^2\approx 0.2829$, while the temperature corresponding to the control variables initialized with unitary values gives $\|q-z\|_{L^2(0,T,L^2(\Omega_{obs}))}^2\approx 1.930$. Hence, the distance among reference and optimal state diminished of about $85.3\%$ throughout the optimization process, compared to the uncloaked configuration.

The cloaking efficiency is not constant in time. Moving towards the final time instant  $t=2 s$, we approach from above an efficiency of $\eta = 0.91$, that is close to the one reached in the stationary problem of Section \ref{Steady_state_numerical_simulation}. Lower values of the efficiency at the first time instants are due to a small overshot shown by the optimized field compared to the reference field. However, this discrepancy is quickly overcome and the tracking efficiency quickly increases in the following time instants. Moreover, at the first time instants, the effect of the heat flux produced by the source is so mild that the presence of the obstacle has little impact on the fields compared to the following time steps. In what follows, we compare the results obtained with a time dependent cloak with those one would obtain employing the cloak derived from the stationary optimization. In other words, the optimal controls of Figure \ref{ss_pb1} are applied constantly over time. In the first column of Figure \ref{state_stationary_time_dependent_solution} we show the state created by the stationary optimal control terms at times $t=0.57(s)$, $t=1.29(s)$, $t=2.00(s)$, the second column reports the tracking error produced by these very same control terms and the third column the tracking error produced by the optimal controls deriving from the time dependent optimization. 
\begin{figure}[h!]
\centerline{
\includegraphics[width=0.365\textwidth]{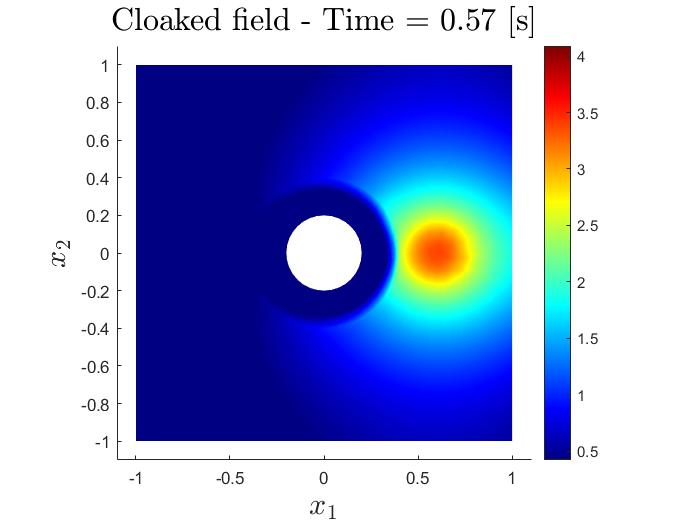}
\includegraphics[width=0.365\textwidth]{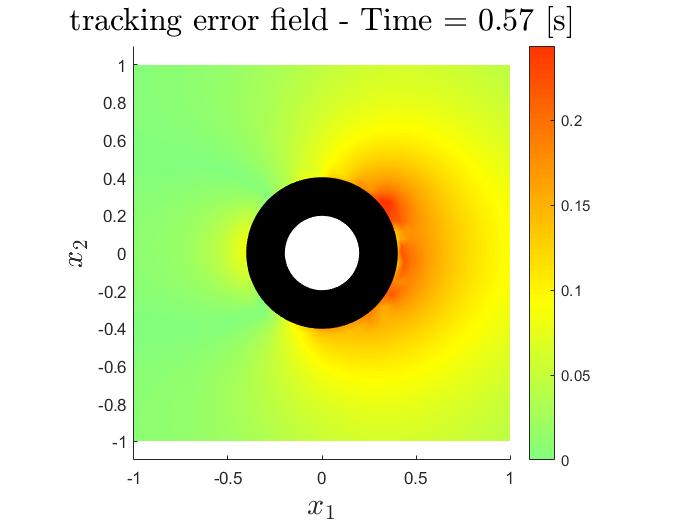}
\includegraphics[width=0.365\textwidth]{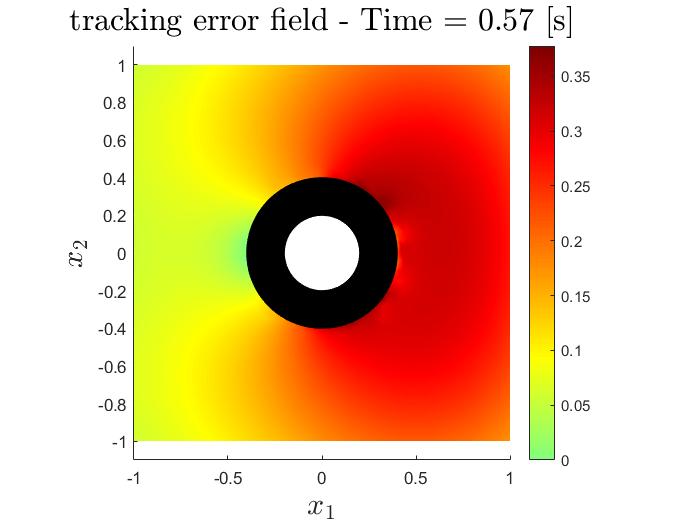}}
\centerline{
\includegraphics[width=0.365\textwidth]{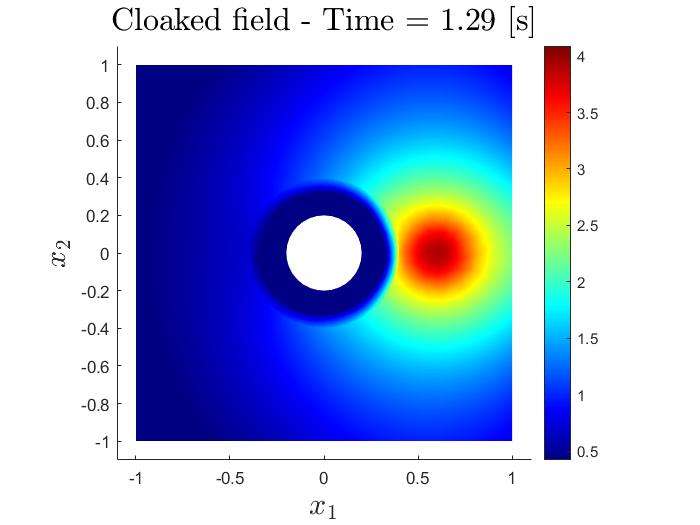}
\includegraphics[width=0.365\textwidth]{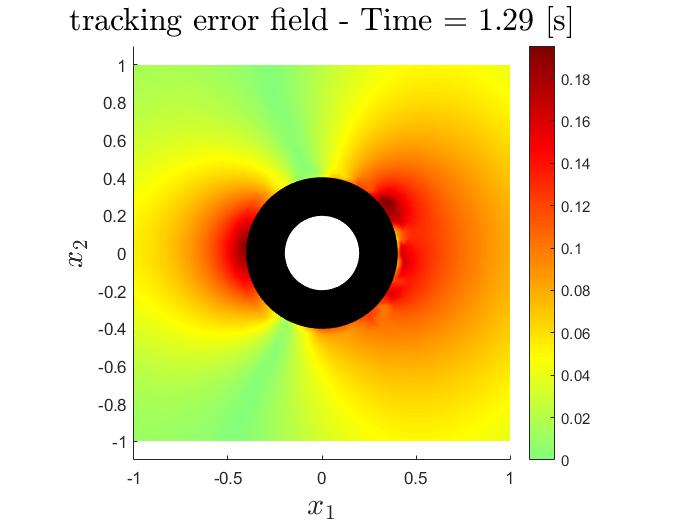}
\includegraphics[width=0.365\textwidth]{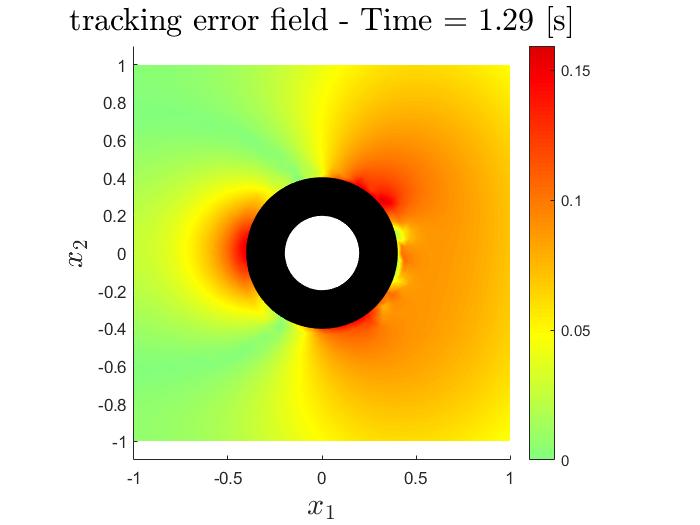}}
\centerline{
\includegraphics[width=0.365\textwidth]{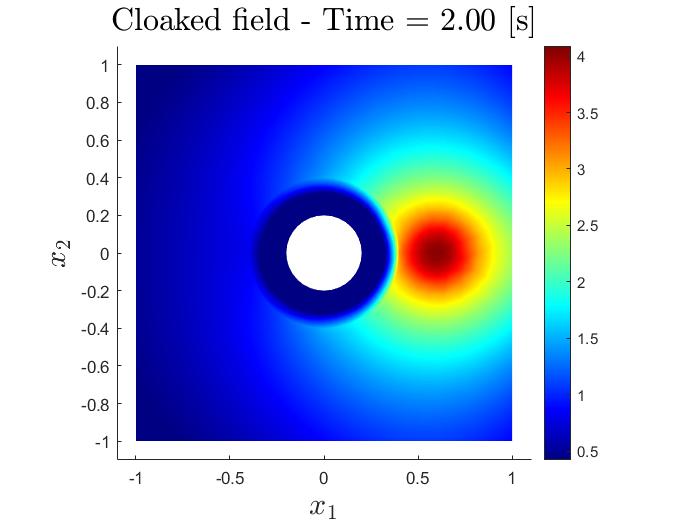}
\includegraphics[width=0.365\textwidth]{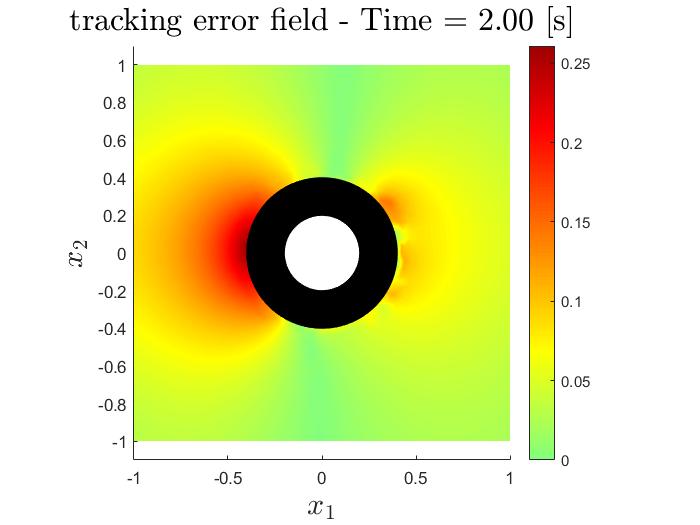}
\includegraphics[width=0.365\textwidth]{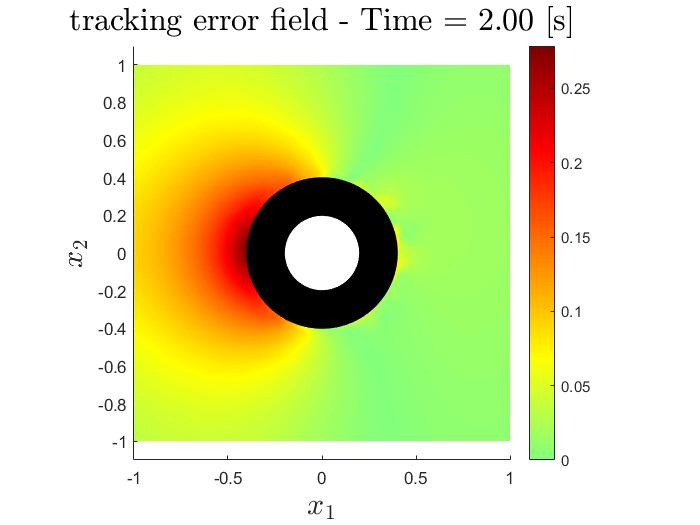}}

\caption{Time-dependent, passive thermal cloaking: cloaked field generated by the stationary optimal solution, tracking error field generated by the stationary optimal solution and tracking error generated by the time dependent optimal solution (from left to right) at time instants $t=0.57(s)$, $t=1.29(s)$, $t=2.00(s)$ (from top to bottom).}

\label{state_stationary_time_dependent_solution}
\end{figure}
From Figure \ref{state_stationary_time_dependent_solution}, it can be noticed that, at least at the first time instant, the stationary cloak outperforms the time dependent cloak. This is due to the inertia of the cloak: it requires some time to adjust its parameters to the incident field, while the stationary is already optimized on the final location of the source term.
However, we can notice that in the long run, at the end of the transitory phase, the optimal time dependent solution and the stationary solution tend to align. Indeed, the time dependent optimal controls are expected to converge to the steady-state ones. The reason why the passive cloak gives acceptable results at every time instant is that what influences mostly the performance of the cloak is the direction of the eigenvectors of the diffusivity matrix. This turns out to be mildly dependent on the source intensity and the source position. The performance of time-dependent arbitrarily shaped cloak is satisfactory as well. To show this, we approximate the cloaking problem in the case in which the control terms are placed in the silhouette of the half-woolen boar (Figure \ref{passive_boar_cloak}). The time-horizon is $t=2s$ and the time step is set to $\Delta{}t=0.5s$ ($N=4$). The reason for this increase in $\Delta{t}$ is that the meshing of the complex cloak is much more expensive. The other parameters are not varied. A sequence of solutions at time instants $t=0.50(s)$, $t=1.50(s)$, $t=2.00(s)$ is shown in Figure \ref{state_boar_time_dependent_solution}.  The control variables obtained at time instants $t=0.57(s)$, $t=1.29(s)$, $t=2.00(s)$ are shown in Figure \ref{control_boar_time_dependent_solution}.
\begin{figure}[h!]
\centerline{
\includegraphics[width=0.365\textwidth]{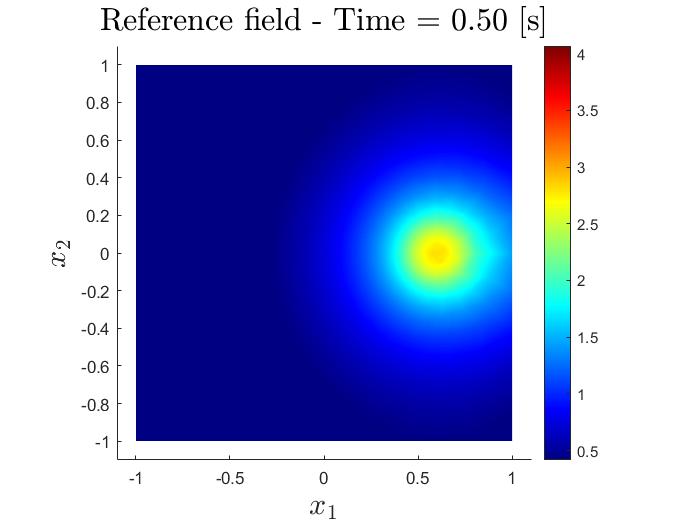  }
\includegraphics[width=0.365\textwidth]{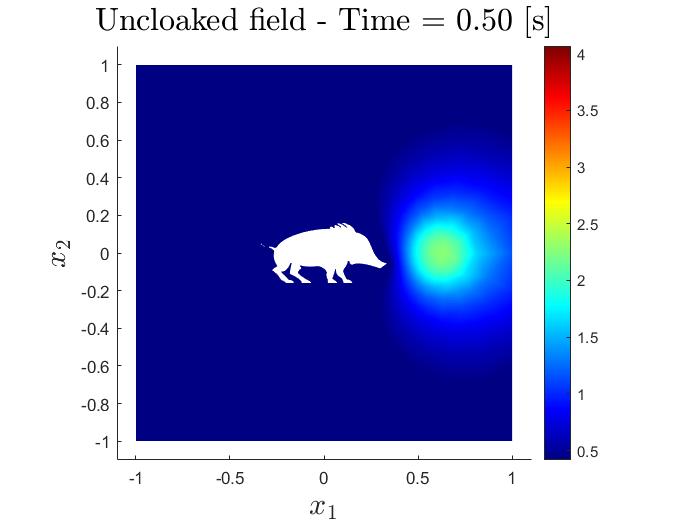}
\includegraphics[width=0.365\textwidth]{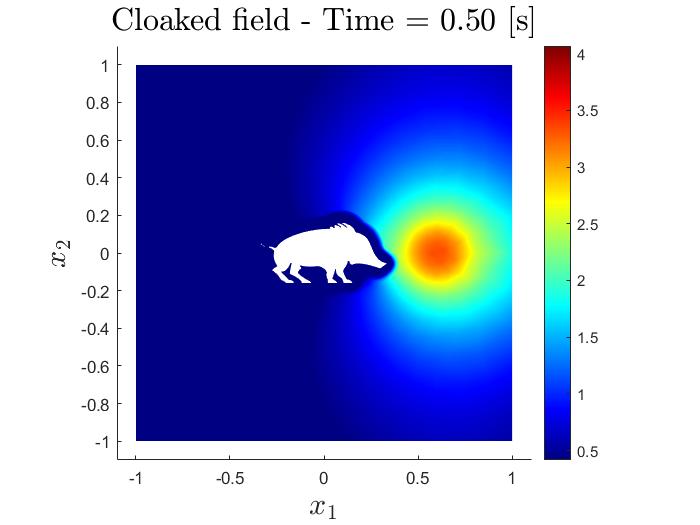}}
\centerline{
\includegraphics[width=0.365\textwidth]{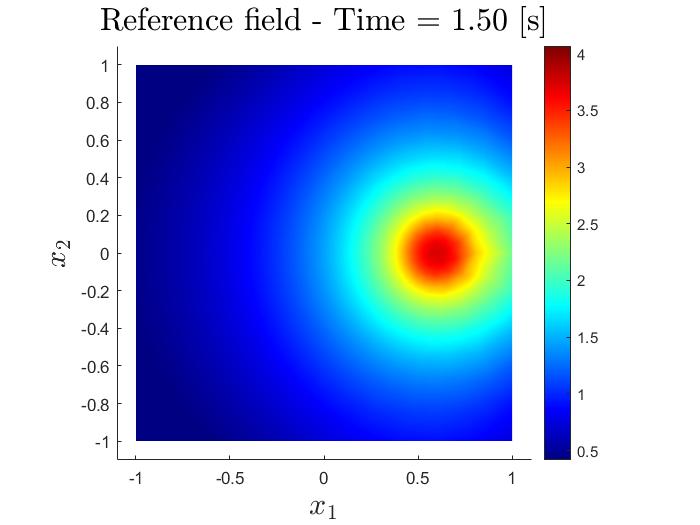}
\includegraphics[width=0.365\textwidth]{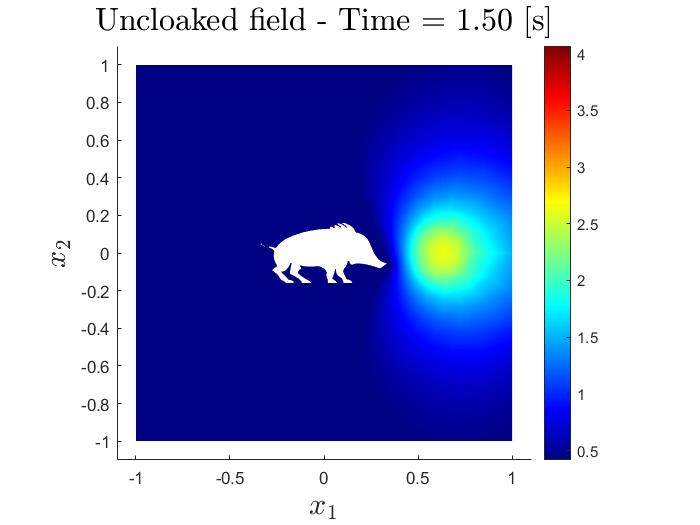}
\includegraphics[width=0.365\textwidth]{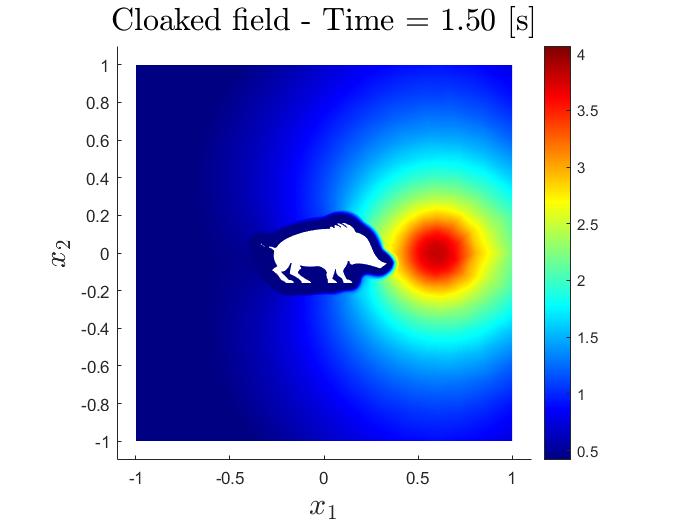}}
\centerline{
\includegraphics[width=0.365\textwidth]{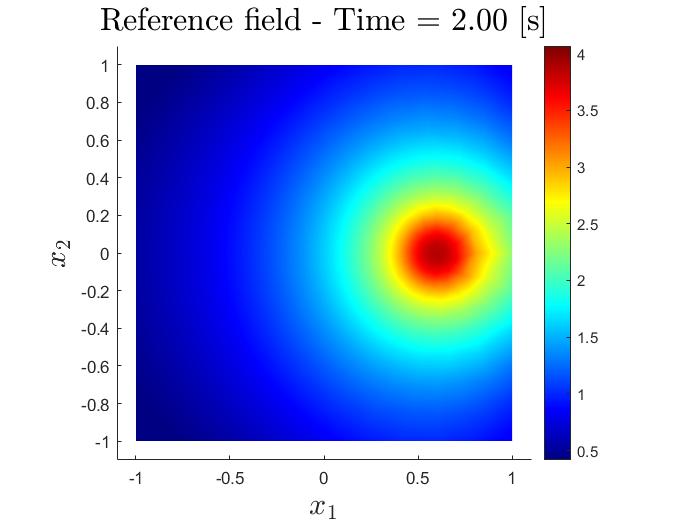}
\includegraphics[width=0.365\textwidth]{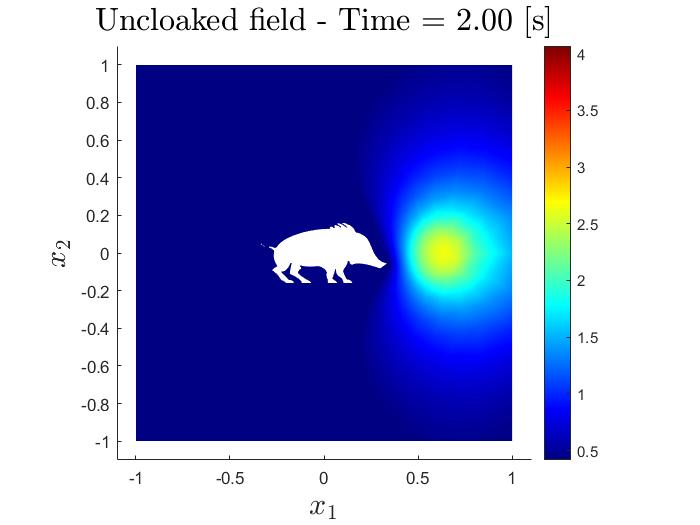}
\includegraphics[width=0.365\textwidth]{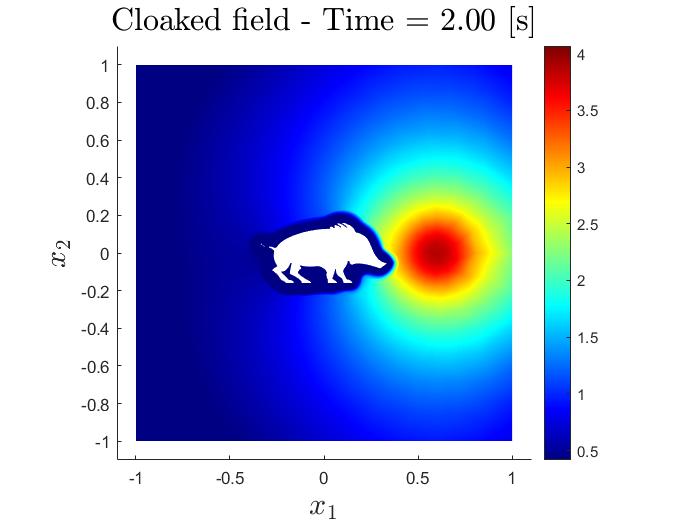}}

\caption{Time-dependent, passive thermal cloaking, target with complex shape: reference, uncloaked, and cloaked fields (from left to right) at time instants $t=0.50(s)$, $t=1.50(s)$, $t=2.00(s)$ (from top to bottom).}
\label{state_boar_time_dependent_solution}
\end{figure}
\begin{figure}[h!]
\centerline{
\includegraphics[width=0.365\textwidth]{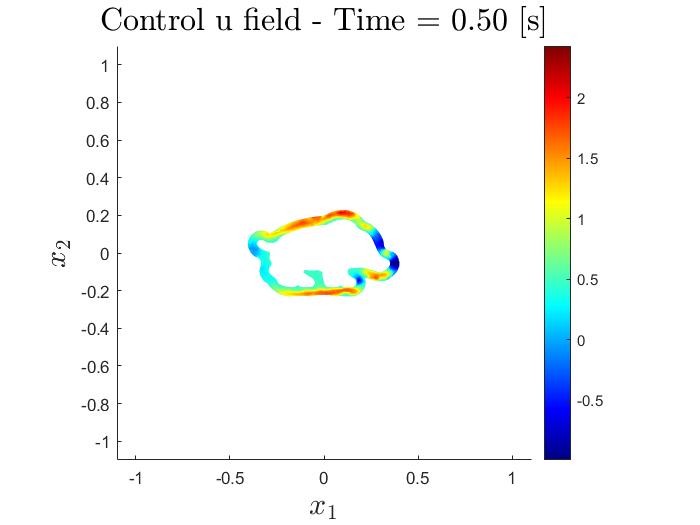}
\includegraphics[width=0.365\textwidth]{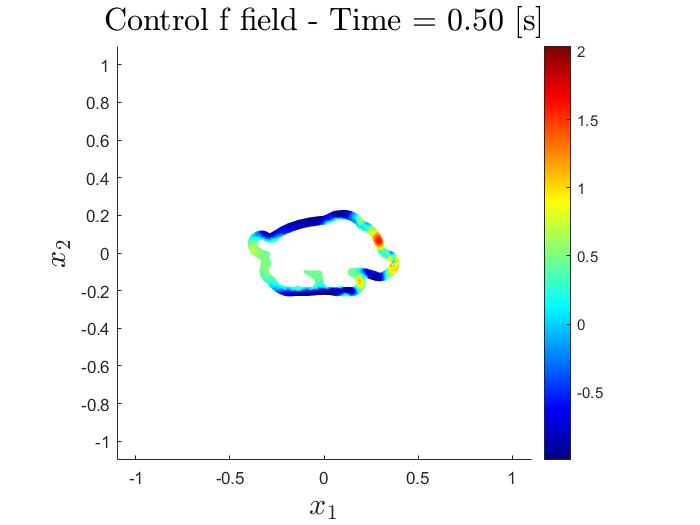}
\includegraphics[width=0.365\textwidth]{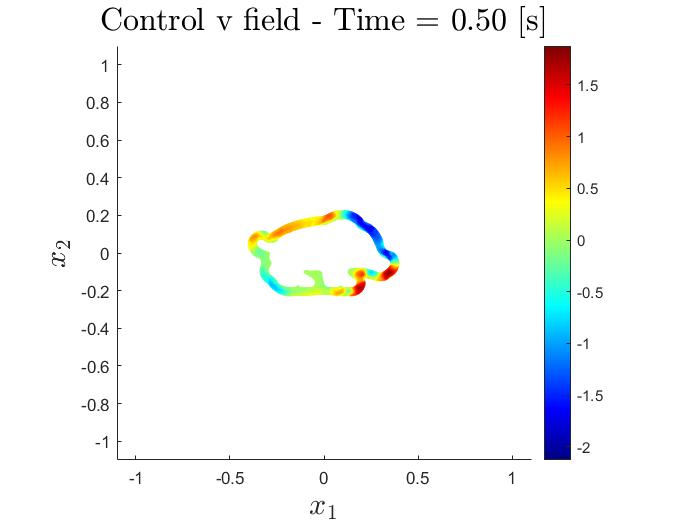}}
\centerline{
\includegraphics[width=0.365\textwidth]{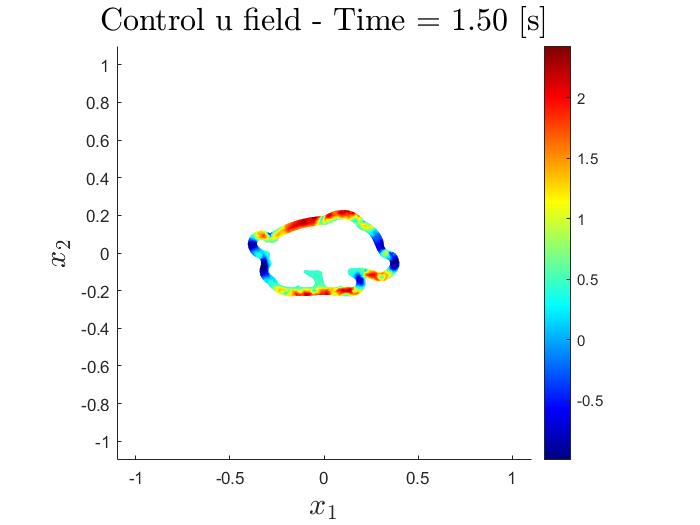}
\includegraphics[width=0.365\textwidth]{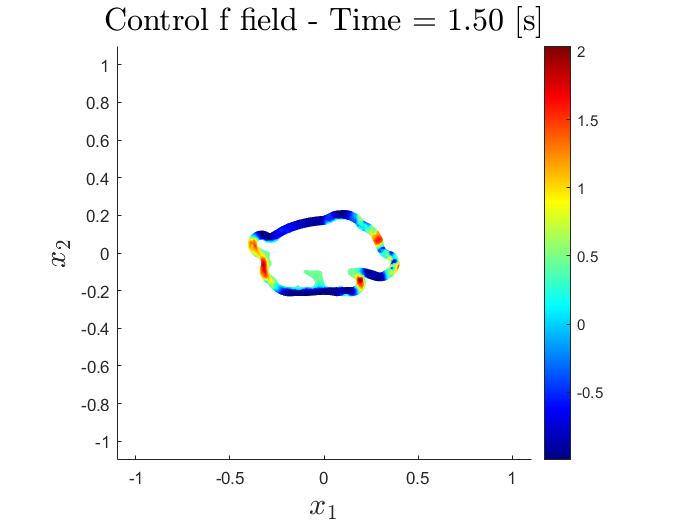}
\includegraphics[width=0.365\textwidth]{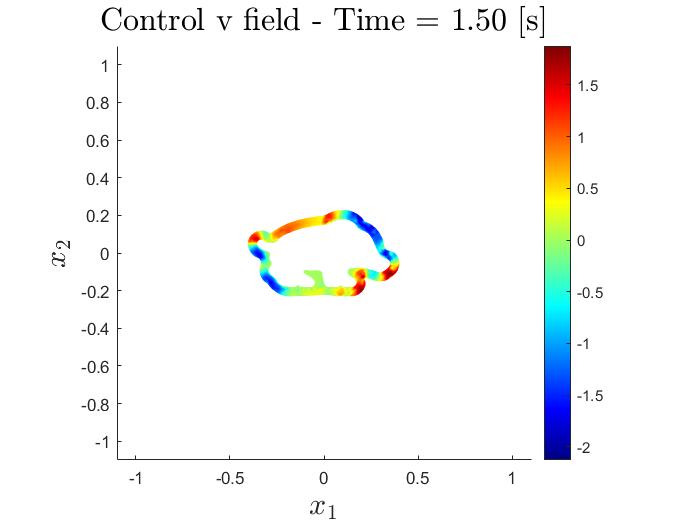}}
\centerline{
\includegraphics[width=0.365\textwidth]{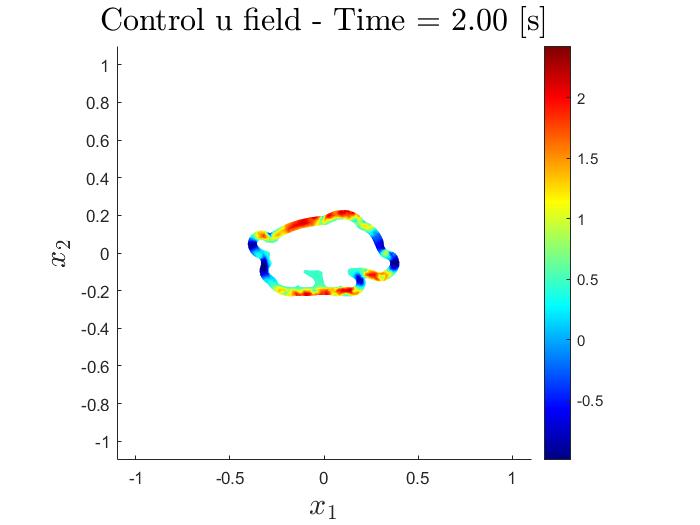}
\includegraphics[width=0.365\textwidth]{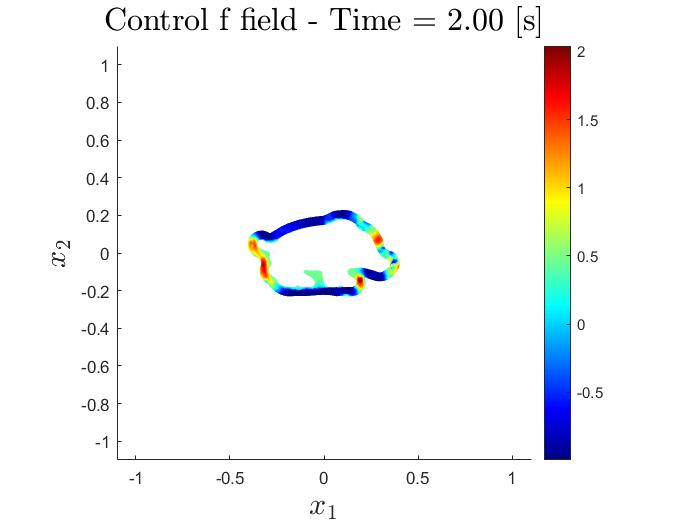}
\includegraphics[width=0.365\textwidth]{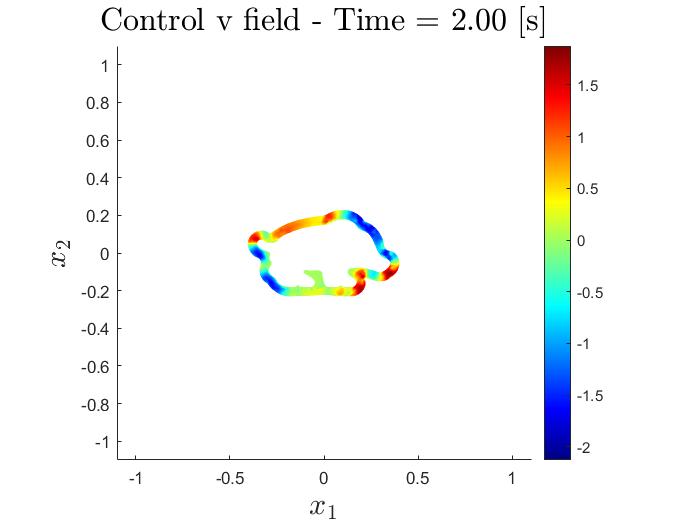}}
\caption{Time-dependent, passive thermal cloaking, target with complex shape:  $u$, $f$, $v$ (lines 1--3) at time instants: $t=0.50(s)$, $t=1.50(s)$, $t=2.00(s)$ (from left to right). 
}
\label{control_boar_time_dependent_solution}
\end{figure}
To quantitatively assess the performance of the cloak, we report the value of the norm $\|\cdot\|_{L^2(0,T,L^2(\Omega_{obs}))}^2$. 
The optimal state gives $\|q^{\star}-z\|_{L^2(0,T,L^2(\Omega_{obs}))}^2\approx 0.1916$, while the field obtained with unitary control variable results in $\|q-z\|_{L^2(0,T,L^2(\Omega_{obs}))}^2\approx 2.3165$.
\\

\section{Conclusion}
\label{conclusion}
In this paper, we have designed a passive thermal cloaking strategy using optimization techniques. The arising optimal control problem is bilinear and this makes its well-posedness analysis nontrivial. In particular, an existence theorem for an optimal control and the associated optimal state has been proven, both in the stationary and the time-dependent case, before deriving a system of (necessary) optimality conditions. We have shown as the proposed optimal control strategy allows to impose a set of physical constraints regading heat conduction and convection during the optimization process. Constraints regarding the manufacturability of the cloak could be imposed as well, however requiring additional efforts and being out of the scope of the present work. 
The numerical approximation of the optimal control problems is derived following the Optimize-then-Discretize approach; results for both the stationary and time-dependent problem are reported. Compared to the previous work of Sinigaglia et al. {\cite{Sinigaglia_2022}, the passive cloaking device provides less flexibility. For instance, the performances deteriorate drastically when the cloaking region is a disconnected domain. However, as long as the cloaking sources are placed in a connected domain, we succeed in cloaking even obstacles of complex shapes maintaining the advantages offered by the passive configuration. Envisaging the possibility of practically realizing this cloak, further research will focus on the study of metamaterials presenting the inhomogeneous-anisotropic properties derived in the optimization. As future work, the setting of suitable reduced order modeling (ROM) strategies can be envisaged for the problems considered in this work. This would be invaluable to estimate the reference-field in real-time applications, and to promptly find the optimal solution. Indeed, the optimal solutions and control patterns are dependent on the particular probing field which needs to be known a-priori or estimated by measuring the temeperature field sorrounding the cloak. The optimization algorithm presented in this work would then contribute to construct an estimate, control and reduce algorithm able to tackle real-time applications and moving probing field. We remark that the ROM bears as additional difficulty the necessity to find a final solution fulfilling the desired constraints. This is far form being trivial, since imposing constraints on the reduced solution is not enough to guarantee that the recovered solution satisfies them. The efficient construction of accurate ROMs for constrained OCPs is still an open question, whose solution could impact on cloaking problems, among others, significantly. Finally, the time-depenent diffusivity field could serve as guideline to design active/robotic metamaterials see e.g. \cite{bertoldi2017flexible,brandenbourger2019non} which are able to change their conductivity properties dynamically and in a controlled way. Depending on the robotic material constraints, the design can be iterated exploiting the flexibility of the optimization paradigm in tackling practical actuator constraints.  \vspace{-0.48cm}

\appendix{\section{Technical proofs}
\label{appendix_proof}
\subsection{Proof of Theorem \ref{Theorem_1} \label{proofTHM1}}
    Since $\mathcal{E}$ is uniformly elliptic, there exists $\hat{\alpha} > 0$ such that $K(\textbf{x})\pmb{\xi}\cdot\pmb{\xi} \geq \hat{\alpha}\pmb{\xi}\cdot\pmb{\xi}$ for all $\pmb{\xi}\in\mathbb{R}^{n},\: \forall \textbf{x}\in \Omega$. Therefore, the bilinear form $a(\cdot, \cdot)$ is coercive, since  \vspace{-0.2cm}
\begin{align*}
\begin{split}
    \textrm{$a(p,p)$} = &\int_{\Omega}(K{\nabla{}p})\cdot\nabla{}p \,d\Omega - \int_{{\Gamma_{d}}}p^2 \,d\Gamma = \int_{\Omega}(K{\nabla{}p})\cdot\nabla{}p \,d\Omega - \|p\|_{L^2(\Gamma_{d})}^2 \geq \\
    &\int_{\Omega}(\hat{\alpha}{\nabla{}p})\cdot\nabla{}p \,d\Omega - \|p\|_{L^2(\Gamma_{d})}^2 \geq \hat{\alpha} \|\nabla{}p\|_{L^2(\Omega)}^2 - C_{tr}^2 \|p\|_{H^1(\Omega)}^2 \geq 
    \tilde{\alpha}\|p\|^{2}_{V} \qquad \forall p \in V, \vspace{-0.4cm}
\end{split}
\end{align*}
having set $\tilde{\alpha} = (\frac{\hat{\alpha}}{1+C_{p}^{2}} - C_{tr}^2)$, thanks to the Trace and the Poincar\'e inequalities (see, e.g. \cite[Appendix A]{manzonioptimal}).
 A sufficient condition for coercivity is therefore that $\hat{\alpha} > ( 1+C_{p}^{2} )C_{tr}^2$.

Since $u\in\mathcal{U}\subseteq L^{\infty}(\Omega_{c})$ (see, e.g., \cite[Section A.5.11]{manzonioptimal}), then $K_{i,j}\in L^{\infty}(\Omega_{c})$, $i,j=1,2$, and therefore there exists $M>0$ such that 
$\|K\|_{L^{\infty}(\Omega_{c})} \leq M$. 
Therefore, the bilinear form is also continuous:
\begin{align*}
\begin{split}
    \textrm{$|a(u,v)|$} = &\lvert\int_{\Omega}(K\nabla{u})\cdot \nabla{v} \,d\Omega - \int_{\Gamma_{d}} uv \,d\Gamma\rvert \leq \int_{\Omega}|(K\nabla{u})\cdot \nabla{v}| \,d\Omega + \int_{\Gamma_{d}} |uv| \,d\Gamma \leq \\
&M\|\nabla{u}\|_{L^2(\Omega)}\|\nabla{v}\|_{L^2(\Omega)} + \|u\|_{L^2(\Gamma_{d})}\|v\|_{L^2(\Gamma_{d})} \leq ( M + C_{tr}^2 ) \|u\|_{H^1(\Omega)}\|v\|_{H^1(\Omega)}  \qquad \forall u,v \in V,
\end{split}
\end{align*}
where $\tilde{M}= M + C_{tr}^2>0$.
The functional $F$ is clearly continuous; then, applying the Lax-Milgram Lemma, there exists a unique $\tilde{q}\in V$ solving \eqref{weak_fomulation_analysis}; similarly, there exists a unique $q\in H^1(\Omega)$, $q = \tilde{q}+R T_0$, weak solution of \eqref{pde_analysis}. 
Then, taking $v=\tilde{q}$ in  the weak formulation \eqref{pde_analysis}, we obtain: 
\begin{equation*}
    a(\tilde{q},\tilde{q}) = \int_{\Omega} s\tilde{q} \,d\Omega - a(R T_0,\tilde{q}),
\end{equation*}
\vspace{-0.1cm}
and then
\begin{equation*}
\tilde{\alpha}\|\tilde{q}\|_{V}^2 \leq \int_{\Omega} s\tilde{q} \,d\Omega - a( R T_0 , \tilde{q} ) \leq |s|\|\tilde{q}\|_{L^2(\Omega)} + \tilde{M}\|R T_0\|_{V}\|\tilde{q}\|_{V} \leq |s|\|\tilde{q}\|_{V} + \tilde{M}\|R T_0\|_{V}\|\tilde{q}\|_{V}.
\end{equation*}
Finally, dividing by $\tilde{\alpha}\|\tilde{q}\|_{V}$ we get
$
    \|\tilde{q}\|_{V} \leq \frac{\lvert s\rvert + \tilde{M}\|R T_0\|_{V}}{\tilde{\alpha}}. \qedhere
$

\subsection{Proof of Theorem \ref{Theorem_2} \label{proofTHM2}}
    Taking   the derivative of the PDE \eqref{pde_analysis} with respect to $u$,  along the direction $h$, and using the chain rule with $q(u)=\Theta(u)$, we obtain that $z=\Theta'(u)h\in V$ satisfies
\begin{equation}  \label{directional_derivative_analysis}
 \begin{cases}
 \begin{array}{ll}
         \nabla{}\cdot[ - (\mu+u\chi_{c})\nabla{z}] = \nabla{}\cdot[h\chi_{c}\nabla{q}(u)]  & \text{in $\Omega$}  \\
        z = 0 & \text{on $\Gamma_{o}$}\\
        -\mu \nabla{z}\cdot \textbf{n} + z = 0 & \text{on $\Gamma_{d}$}.\\
    \end{array}
    \end{cases}
\end{equation}

The weak form of  problem \eqref{directional_derivative_analysis} therefore reads: find $z \in V$ such that 
\begin{equation} \label{weak_form_z_equation}
     \int_{\Omega} ( \mu + u\chi_{c} )\nabla{z}\cdot\nabla{\phi} \,d\Omega - \int_{\Gamma_{d}} z\phi \,d\Gamma = - \int_{\Omega} h\chi_{c}\nabla{q}\cdot\nabla{\phi} \,d\Omega \qquad \forall \phi \in V.
\end{equation}
Since \eqref{weak_form_z_equation} holds $\forall \phi \in V$, we can set $\phi = z$ and obtain:
\begin{equation*}
    a(z,z) = -\int_{\Omega_{c}} h\nabla{q}\cdot\nabla{z} \,d\Omega.
\end{equation*}

We can now use Hölder’s and Young inequality: $AB \leq \epsilon A^2 + \frac{1}{4\epsilon} B^2$ $\forall \epsilon > 0$ and the property of the norm $\| \cdot \|_{V}$ to rearrange the right hand side of the equation as:
\begin{equation*}
    \left|\int_{\Omega_{c}} h\nabla{q}\cdot\nabla{z} \,d\Omega \right| \leq \frac{1}{4\epsilon} \|h\nabla{q}\|^2_{L^2(\Omega_{c})} + \epsilon \|z\|_{V}^2 \qquad \forall \epsilon > 0 \vspace{-0.35cm}
\end{equation*}
where, in particular, we have used the embedding $H^2(\Omega_{c})\hookrightarrow L^{\infty}(\Omega_{c})$ for $n=2,3$ (see, e.g., \cite[Appendix A]{manzonioptimal}). 
Using the coercivity of the form $a(\cdot,\cdot)$ and rearranging the terms, we get:
\begin{equation} \label{estimate_norm_V_1}
     \|z\|_{V}^2 \leq \frac{1}{4\epsilon( \tilde{\alpha}-\epsilon )}\|h\nabla{q}\|_{L^2(\Omega_{c})}^2  \leq  \frac{C(n,p,\Omega_{c})}{4\epsilon( \tilde{\alpha}-\epsilon )}\|h\|^2_{L^{\infty}(\Omega_{c})}\|q\|_{V}^2 \qquad \forall \epsilon > 0;
\end{equation}
where $C(n,p,\Omega_{c})>0$ is the embedding constant, in general dependent on the domain $\Omega_{c}$, on the dimension $n$ and on the Hilbert space $H^p(\Omega_{c})$ considered.
Being $\epsilon>0$ arbitrary, we can pick $\epsilon = \frac{\tilde{\alpha}}{2}$ and so obtain from \eqref{estimate_norm_V_1}:
\begin{equation} \label{estimate_norm_V_2}
    \|z\|_{V}^2 \leq \frac{C(n,p,\Omega_{c})}{\tilde{\alpha}^2} \|h\|^2_{L^{\infty}(\Omega_{c})}\|q\|_{V}^2 \leq \frac{C(n,p,\Omega_{c})}{\tilde{\alpha}^2} \|h\|^2_{\mathcal{U}}\|q\|_{V}^2.
\end{equation}

We now consider the term of the Fr\'echet derivative: 
$R(u,h) = \Theta(u+h) - \Theta(u) - \Theta'(u)h$. In particular, 
$q(u+h)=\Theta(u+h)$ satisfies
\begin{equation}\label{weak_form_q_u+h}
    \int_{\Omega} (\mu + (u+h)\chi_{c})\nabla{q}(u+h)\cdot\nabla{p} \,d\Omega - \int_{\Gamma_{d}} q(u+h)p \,d\Gamma = Fp \quad \forall p\in V;
\end{equation}
then, subtracting to \eqref{weak_form_q_u+h} the weak formulation satisfied by $q(u)=\Theta(u)$ \eqref{weak_fomulation_analysis} and the weak formulation of $z= \Theta'(u)h$, given by \eqref{weak_form_z_equation},  we can infer that $R(u,h)\in V$ satisfies
\begin{align} \label{equation_R_analysis} 
       \int_{\Omega} [\mu + ( u + h )\chi_{c} ] \nabla{R}(u,h)\cdot \nabla{\phi}  \,d\Omega -\int_{\Gamma_{d}} R(u,h)\phi \,d\Gamma = -\int_{\Omega} h\chi_{c}\nabla{z}\cdot\nabla{\phi} \,d\Omega.
\end{align}

Since equation \eqref{equation_R_analysis} has the same form as \eqref{weak_form_z_equation}, we can immediately infer the following estimate for $R$:
\begin{equation} \label{bound_on_norm_V_R_1}
    \|R\|_{V}^2 \leq \frac{1}{\tilde{\alpha}^2}\|h\nabla{z}\|_{L^2(\Omega_{c})}^2 \qquad \forall \epsilon > 0  
\end{equation}
The term on the right hand side of \eqref{bound_on_norm_V_R_1} is controlled using \eqref{estimate_norm_V_1}:
\begin{equation*}
\|h\nabla{z}\|_{L^2(\Omega)}^2 \leq \|h\|_{L^{\infty}(\Omega_{c})}^2\|z\|_{V}^2\leq \frac{1}{\tilde{\alpha}^2} \|h\|_{L^{\infty}(\Omega_{c})}^4\|q\|_{V}^2 \leq \frac{C(n,p,\Omega_{c})}{\tilde{\alpha}^2} \|h\|_{H^2(\Omega_{c})}^4\|q\|_{V}^2
\end{equation*}
so that, by defining $C_{R}(\tilde{\alpha},n,p,\Omega_{c}) = \frac{C(n,p,\Omega_{c})}{\tilde{\alpha}^4}>0$, we obtain that 
\begin{equation} \label{bound_on_norm_V_R_2}
    \|R\|_{V}^2 \leq C_{R}(\tilde{\alpha},n,p,\Omega_{c}) \|h\|_{H^2(\Omega_{c})}^4\|q\|_{V}^2.
\end{equation}
Thanks to \eqref{lax-milgram bound}, the term $\|q\|_{V}^2$ can be bounded by the data. 

As a matter of fact, $R(u,h)=o(\|h\|_{\mathcal{U}})$ for $\|h\|_{\mathcal{U}}\rightarrow 0$ since
\begin{equation}
\lim_{\|h\|_{\mathcal{U}}\to 0} \frac{\|R(u,h)\|_{V}}{\|h\|_{\mathcal{U}}}  \leq \lim_{\|h\|_{\mathcal{U}}\to 0} C_{R}(\tilde{\alpha},n,p,\Omega_{c},\tilde{M},s,T_{o})\|h\|_{\mathcal{U}} = 0. 
\end{equation}
In conclusion, the control-to-state map satisfies: $q=\Theta(u)\in C^1(\mathcal{U},V)$. \qed

\subsection{Proof of Theorem \ref{Theorem_4} \label{proofTHM4}}
(Sketch)
Taking formally the derivative of our problem \eqref{pde_td_analysis} along the direction $u$ with respect to $h$ we note that $z=\Theta'(u)h\in Y$ satisfies \eqref{pde_z_td_analysis}.
Using \eqref{L2_V} it can be shown that following stability estimates for $z$ hold:
\begin{subnumcases}{}
     \|z\|_{L^2(0,T;V)}^2 \leq \tfrac{1}{\tilde{\alpha}^2}(\|h\|^2_{\mathcal{U}} K_{2}( s , q(0) , \tilde{\alpha}, T , n , p ,\Omega_{c}))\label{L2_V_z}\\
     \|z\|_{L^2(0,T;V^\star)}^2 \leq C_{1}(\tilde{M},\tilde{\alpha})(\|h\|_{\mathcal{U}}^2K_{2}( s , q(0) , \tilde{\alpha}, T ))\label{L2_V_z_star}
\end{subnumcases}
with $C_{1}(\tilde{M},\tilde{\alpha}) = \frac{1}{2}(\frac{\tilde{M}^2}{\tilde{\alpha}^2}+4)$. 
Analogously to what done in $theorem$ \ref{Theorem_2}, it can be shown that $R(u,h) = \Theta(u+h) - \Theta(u) - \Theta'(u)h$ satisfies
\begin{align} \label{R_weak} 
\begin{split}
        &\int_{\Omega} \frac{\partial{R}(u,h)}{\partial{t}}\phi \,d\Omega  + \int_{\Omega} ( \mu + ( u + h )\chi_{c} ) \nabla{R}(u,h)\cdot \nabla{\phi}  \,d\Omega -\int_{\Gamma_{d}} R(u,h)\phi \,d\Gamma = -\int_{\Omega} h\chi_{c}\nabla{z}\cdot\nabla{\phi} \,d\Omega.
\end{split}
\end{align}
Using \eqref{L2_V_z}, \eqref{L2_V_z_star} it turns out that $R$ satisfies
\begin{subnumcases}{}
     \|R(u,h)\|^2_{L^2(0,T;V)} \leq \frac{1}{\tilde{\alpha}^4} (\|h\|_{\mathcal{U}}^4K_{2}( s , q(0) , \tilde{\alpha}, T ))\label{L2_V_R}\\
     \|\dot{R}(u,h)\|_{L^2(0,T;V^\star)}^2 \leq (\frac{2\tilde{M}^2}{\tilde{\alpha}^4} + \frac{2}{\tilde{\alpha}^2})\|h\|_{\mathcal{U}}^4K_{2}( s , q(0) , \tilde{\alpha}, T , n , p ,\Omega_{c})\label{L2_R_V_dual},
\end{subnumcases}
and we pose $C_{2}(\tilde{M},\tilde{\alpha})=\frac{1}{\tilde{\alpha}^4}$, $C_{3}(\tilde{M},\tilde{\alpha}) = \frac{2(\tilde{M}^2+\tilde{\alpha}^2)}{\tilde{\alpha}^4}$.
Reminding that $\forall f\in H^1(0,T;V,V^{\star})$, we have
$
    \|f\|_{H^1(0,T;V,V^{\star})}^2 = \|f\|_{L^2(0,T;V)}^2 + \|f_{t}\|_{L^2(0,T;V^{\star})}^2.  
$
Using this definition for $R$, \eqref{L2_V_R} and \eqref{L2_R_V_dual}, we find
\begin{equation*} 
    \|R(u,h)\|_{Y} \leq \sqrt{[(C_{2}(\tilde{\alpha})+C_{4}(\tilde{M},\tilde{\alpha}))]K_{2}( s , q(0) , \tilde{\alpha}, T , n , p ,\Omega_{c})}\|h\|_{\mathcal{U}}^2,
\end{equation*}
telling us that $R(u,h)=o(\|h\|_{\mathcal{U}})\:for\:\|h\|_{\mathcal{U}}\rightarrow 0$.
Therefore the control-to-state map satisfies: $q=\Theta(u)\in C^1(\mathcal{U},Y)$. \qed
}
\printbibliography


\end{document}